\documentclass[pdflatex,sn-mathphys-num]{sn-jnl}

\usepackage{graphicx}%
\usepackage{multirow}%
\usepackage{amsmath,amssymb,amsfonts}%
\usepackage{amsthm}%
\usepackage{mathrsfs}%
\usepackage[title]{appendix}%
\usepackage{xcolor}%
\usepackage{textcomp}%
\usepackage{manyfoot}%
\usepackage{booktabs}%
\usepackage{algorithm}%
\usepackage{algorithmicx}%
\usepackage{algpseudocode}%
\usepackage{listings}%
\usepackage{subcaption}%

\theoremstyle{thmstyleone}%
\newtheorem{theorem}{Theorem}%
\newtheorem{lemma}[theorem]{Lemma}%
\newtheorem{proposition}[theorem]{Proposition}%

\theoremstyle{thmstyletwo}%
\newtheorem{remark}{Remark}%

\theoremstyle{thmstylethree}%
\newtheorem{definition}{Definition}%
\providecommand{\textcite}{\citet}

\usepackage{siunitx}
\usepackage{tikz,pgfplots}
\usepackage{adjustbox}
\usetikzlibrary{arrows.meta,calc,patterns,positioning,fit,backgrounds,decorations.pathmorphing}
\pgfplotsset{compat=1.18}
\definecolor{cividisDark}{RGB}{53,69,108}    
\definecolor{cividisMid}{RGB}{129,127,120}   
\definecolor{cividisLight}{RGB}{212,193,95}  

\usepackage{mathtools,amssymb,amsthm, bbold}
\makeatletter
\renewcommand*\env@matrix[1][*\c@MaxMatrixCols c]{%
  \hskip -\arraycolsep
  \let\@ifnextchar\new@ifnextchar
  \array{#1}}
\makeatother
\numberwithin{equation}{section}

\renewcommand{\tilde}[1]{\widetilde{#1}}

\newtheorem{assumption}{Assumption}

\usepackage{xparse}
\NewDocumentCommand{\snorm}{m O{0} O{\infty} O{d}}{%
  \lVert #1 \rVert_{W^{#2,#3}((0,1)^{#4})}%
}

\newcommand{\norm}[1]{\lVert #1 \rVert}
\newcommand{\euknorm}[1]{\lvert #1 \rvert }

\newcommand{\opnorm}[1]{\norm{#1}_\text{op}}
\newcommand{\E}{\mathbb{E}}
\newcommand{\N}{\mathbb{N}}
\newcommand{\R}{\mathbb{R}}
\newcommand{\G}{\mathbb{G}}

\newcommand{\V}{\mathbb{V}}
\newcommand{\A}{\mathbb{A}}
\newcommand{\I}{\mathbb{I}}
\newcommand{\Acal}{\mathcal{A}}
\newcommand{\Ocal}{\mathcal{O}}
\newcommand{\Tcal}{\mathcal{T}}
\newcommand{\Ecal}{\mathcal{E}}
\newcommand{\Pcal}{\mathcal{P}}
\newcommand{\Fcal}{\mathcal{F}}
\newcommand{\Hcal}{\mathcal{H}}
\newcommand{\Scal}{\mathcal{S}}
\newcommand{\Lcal}{\mathcal{L}}
\newcommand{\Gcal}{\mathcal{G}}
\newcommand{\Xcal}{\mathcal{X}}

\newcommand{\prob}{\mathbb{P}}

\newcommand{\uht}{u_h^{\mu, \nu, t}}
\newcommand{\uh}{u_h^{\mu, \nu}}
\newcommand{\uhthat}{\hat{u}_h^{\mu, \nu, t}}

\newcommand{\uhtprered}{u_\text{pre-red}^{\mu, \nu, t}}

\begin{document}

\title[A Deep Learning based Reduced Order Model for Hybrid-Type Parabolic PDEs]
{A New Adaptive Deep Learning based Reduced Order Model for Hybrid-Type Parabolic PDEs: Rigorous Error Analysis and Applications}

\author*[1]{\fnm{Dawid} \sur{Kotowski}}\email{dawid.kotowski@uni-muenster.de}
\author[1]{\fnm{Mario} \sur{Ohlberger}}\email{mario.ohlberger@uni-muenster.de}

\affil*[1]{\orgdiv{Institute for Analysis and Numerics}, \orgname{Münster University}, \orgaddress{\street{Einsteinstraße 62}, \city{Münster}, \postcode{48149}, \country{Germany}}}

\abstract{This contribution proposes novel data-driven surrogate modeling approaches for parameterized parabolic PDEs, 
where the parameter dependence can be split into two parts with different decay behavior of the Kolmogorov N-width. 
Such problems naturally arise in many industrial flow processes with dominant advection or traveling fronts in the solution
trajectories. 
To tackle this challenge, we extend the \emph{Deep Orthogonal Decomposition (DOD)} method, recently introduced for related stationary problems, to the time-dependent setting. 
We introduce and rigorously analyze two DOD based approaches:
Our approach is based on two novel adaptive deep learning-based surrogate models: The \emph{DOD-DL-ROM} method which is a Reduced Order Model (ROM) that leverages the adaptive nature of DOD, and the \emph{DOD+DFNN} method, which combines DOD with a generic Deep Feed-Forward Neural Network (DFNN). 
On the theory side, we generalize data-driven POD-based ROM arguments to the DOD setting, establishing a quantitative link between online performance and the regularity of an associated optimal map. 
Furthermore, we identify specific problem size and error tolerance requirements for DOD-based ROMs to outperform POD-based ROMs in hybrid-type problem classes, which is crucial for efficient computation. 
The significance of this work lies in its potential to accelerate the solution of complex PDEs, enabling faster design and optimization of industrial processes. 
The proposed approaches are demonstrated on a catalyst filter benchmark problem, showcasing their effectiveness and comparing favorably to traditional POD-based methods.
}

\keywords{parametric partial differential equations; reduced order modeling; scientific machine learning;
error decomposition; approximation theory}

\maketitle

\section{Introduction}

In the pursuit of accelerated product development and digital transformation, industries reliant on 
complex physical simulations face a pressing challenge: balancing high-fidelity accuracy with 
extreme computational efficiency. Traditional High-Fidelity (HF) simulations, such as Finite Element Methods (FEM) 
and Computational Fluid Dynamics (CFD), are indispensable for predicting intricate physical phenomena, 
but their prohibitive computational costs hinder applications requiring rapid responses or repeated evaluations, 
like real-time control, Digital Twins, and multi-objective design optimization. 
To bridge this gap, Model Order Reduction (MOR) techniques and Deep Learning (DL) based surrogate modelling 
have been employed, but they often struggle for problems with evolving fronts and convection dominated regimes, 
which are characterized by a slow decay of the so called Kolmogorov N-width \cite{OhlRav16}. 
Meanwhile, purely data-driven Machine Learning (ML) approaches, although flexible, may compromise 
physical consistency, leading to unreliable "black-box" predictions. 

Over the past decades the field of Reduced Order Modeling \cite{MR3672144,MR3701994} has undergone a noticeable progress,
shifting from classical projection-based methods to modern data-driven architectures.
The earliest intrusive ROMs originated from the \emph{Proper Orthogonal Decomposition} (POD)
framework, introduced in turbulence analysis by \cite{lumley1967} and \cite{sirovich1987},
and later consolidated in the POD-Galerkin formulation for parametrized PDEs (PPDEs)
\cite{holmes1996,kunischvolkwein2001} and finally the POD-Greedy method \cite{HO2008}.
Subsequent developments sought to address the computational bottlenecks of nonlinear problems
through hyper-reduction techniques such as the \emph{Empirical Interpolation Method} (EIM)~\cite{barrault2004}
and its variants~\cite{chaturantabut2010,DHO12}.
Alternative intrusive formulations include the least-squares Petrov-Galerkin projection~\cite{carlberg2011}
and the GNAT method~\cite{carlberg2013}, which extended ROMs to highly nonlinear and turbulent regimes.

Simultaneously, a non-intrusive and data-driven perspective emerged, aiming to approximate
the parameter-to-solution map directly from snapshot data.
Early methods such as \emph{Dynamic Mode Decomposition} (DMD)~\cite{schmid2010}
and its interpretation via the Koopman operator~\cite{rowley2009}
established a foundation for learning reduced dynamics without explicit projection.
Further, regression-based frameworks such as \emph{Operator Inference}~\cite{peherstorfer2018}
and \emph{POD with interpolation} (PODI)~\cite{barrault2004}
demonstrated the feasibility of reconstructing reduced operators from data.

The integration of deep learning with model order reduction around 2020
led to the first \emph{Deep Learning Reduced Order Models} (DL-ROMs)~\cite{fresca2020},
which combined convolutional autoencoders and neural regressors to emulate low-dimensional dynamics.
Their refinement, the \emph{POD-DL-ROM}~\cite{fresca2022,brivio2023},
fused the interpretability of POD with the flexibility of deep networks.
An alternative approach that combines classical Reduced Basis (RB) methods with ML has recently been 
introduced with error certification as the adaptive and hierarchical \emph{{RB}-{ML}-{ROM}} 
\cite{haasdonk2022new}.  In \cite{klein2025multifidelity}, the approach has been used
for multi-fidelity learning of reduced oder models for parabolic PDE constrained optimization.

For machine learning approaches in the context of reactive flow in porous media with application to the prediction of 
catalysts, we particularly refer to \cite{Gavrilenko2022378}, where a full order, reduced order and machine learning 
model pipeline has been introduced and to \cite{Fokina202567} for ML methods to predict breakthrough curves. 
ML algorithms for parameter identification in reactive flow in porous media have been studied in \cite{Fokina202491}. 
Furthermore, we refer to \cite{Biermann2025}  for physics-enhanced neural networks to enabling micro-kinetics based simulation of industrial packed-bed reactors. 

In this article, we propose novel approaches that combine the strengths of deep learning-based surrogates with 
projection-based MOR, specifically tailored for parameterized time-dependent PDEs with distinct decay behaviors 
of the Kolmogorov N-width, when particular subsets of a parametrization are considered. 
By integrating the Deep Orthogonal Decomposition (DOD) \cite{franco2024} method with Reduced Order Models (ROM) or generic Deep Feed-Forward Neural Networks (DFNN), 
our methodologies, dubbed DOD-DL-ROM and DOD+DFNN, aim to provide robust, real-time, and physically consistent 
solution approaches. Most importantly, we show complexity bounds for the proposed methods, as well as a general lower precision bounds for an optimal choice of neural network weights. These results then motivate a comparison in
terms of maximal complexity needed to guarantee a fixed relative error with high probability.
In fact, we will show a complexity result under some assumptions on the underlying problem class 
that proves that the number of weights to reach an error bound in the DOD-DL-ROM architecture is 
asymptotically smaller than for the POD-DL-ROM architecture (see~Proposition \ref{prop: regularity discussion for POD vs DOD} for details). 
The analysis is based on recent advances in approximation theory of neural networks \cite{yarotsky2017nn,gühring2019nn} that has been used to develop complexity bounds on data-driven ROMs.
Some notable references for such recent cases of usage are \cite{franco2023approx,bhattacharya2021,marwah2021}.
Finally, we will present numerical experiments that underline our theoretical results
for an industrially relevant benchmark problem.

The rest of this article is structured as follows. In Section \ref{sec:problem} we introduce the general problem formulation.
Section \ref{section: data-driven ROMs} then provides some notation and preliminaries for neural networks and statistical learning
and introduces existing as well as our novel deep learning ROMs. 
A thorough error and complexity analysis based on approximation theory is presented in Section \ref{sec:error_analysis}. 
Finally, numerical experiments in Section \ref{sec:experiments} compare the studied DL-ROMs for a benchmark problem motivated 
from catalytic filters.

\section{Mathematical Formulation of the Problem}\label{sec:problem}

Let the parameter spaces $\Theta \subset \R^p$ and $\Theta' \subset \R^q$
be compact and $T>0$. 
We consider a discretized parametric partial differential equation (PPDE) with $N_h$ degrees of freedom in space. 
The resulting FOM solution can be represented as
\begin{equation} \label{eq: fom solution identity}
    t \mapsto u_h(\cdot, t; \mu, \nu) = \sum_{j = 1}^{N_h} [\uh(t)]_j \, \varphi_j \in C^1([0, T]; \Hcal_h) 
\end{equation}
with $\{\varphi_j\}_{j=1}^{N_h}$ a basis of a Hilbert space $\Hcal_h \cong \R^{N_h}$  and $\uh(t) \in \R^{N_h}$ 
for each $t \in [0, T]$. The resulting discrete equations system for the coefficients $\uh(t)$ can be written as
\begin{equation} \label{eq: fom ppde}
    \begin{cases}
    \displaystyle
    \G \frac{\partial}{\partial{t}} \uh(t) + A(\mu, \nu) \uh(t) + 
    N(\uh(t), \mu, \nu)= f(t; \mu, \nu)  \quad &t \in [0, T], \\[0.8em]
    \displaystyle
    \uh(0) = u_0(\mu, \nu) \quad &t=0, \end{cases} 
\end{equation}
where we assume 
    $\G \in \R^{N_h \times N_h}$, $\G_{ij} := \langle \varphi_i, \varphi_j \rangle_{\Hcal_h}$ to be the mass matrix,
    $A: \Theta \times \Theta' \to \R^{N_h \times N_h}$ a mapping to a symmetric positive definite stiffness matrix, 
    $N: \R^{N_h} \times \Theta \times \Theta' \to \R^{N_h}$ a non-linear term, 
    $f: [0, T] \times \R^{N_h} \times \Theta \times \Theta' \to \R^{N_h}$ the source term of the equation, and
    $u_0: \Theta \times \Theta' \to \R^{N_h}$ denotes the parameterized initial datum. 

This allows us to equip $\Scal := \{ \uht \in \R^{N_h} \, \vert \, (\mu, \nu, t) \in \Theta \times \Theta' \times [0, T] \} \subset \R^{N_h}$, the space of the coefficients 
of any solution in $\Hcal_h$, with the following
norm
\begin{equation} \label{eq: definition mass matrix norm}
    \norm{u} := \sqrt{u^T \G u}, \quad u \in \Scal.
\end{equation}
With this definition, we have $\norm{\uht} = \norm{u_h(\cdot, t; \mu, \nu)}_{\Hcal_h}$.

The general problem of finding solutions of (discretized) PPDEs
is equivalent to finding a map $\Gcal_h: \Theta \to \Hcal_h$. Some approaches
like Operator Inference \cite{peherstorfer2018} aim directly at approximating $\Gcal_h$.
However, most approaches try to attain an approximation from the FOM solutions, which are
considered the image of the coefficient identity maps of the solutions of $\Gcal_h$, namely
\begin{equation*}
    \Gcal: \Theta \times [0, T] \to \R^{N_h}; \quad (\mu, t) \mapsto u_h^{\mu, t}.
\end{equation*}
We particularly refer to \cite{brivio2023}, where approximation theory was applied to 
a Proper Orthogonal Decomposition (POD) based data-driven model, to achieve bounds on the complexity
of the network in use. However,
as was shown in \cite{brivio2023}, the POD-based method in question has an inevitable lower precision bound
caused by the Kolmogorov $N$-width (KnW) of the underlying PPDE, which is defined as follows

\begin{definition}[Kolmogorov $N$-width] \label{def: linear kolmogorov}
    Let $N \in \N$, $N \leq N_h$ and $\Scal = \{ u_h^{\mu, t} \in \R^{N_h} \, 
    \vert \, (\mu, t) \in \Theta \times [0, T] \}$ 
    be some parametric manifold. The linear Kolmogorov $N$-width of $\Scal$ is defined as
    \begin{equation*}
        d_N(\Scal) = \inf\limits_{\V \in \R^{N_h \times N}} \sup\limits_{u \in S_{N_h}} 
        \norm{u - \V \V^T \G u}.
    \end{equation*}
\end{definition}

Such lower bound can be of problematic nature for transport-dominated problems. In order to tackle
such problems, without losing the advantage of POD-optimality for linear subspaces, we now introduce
Hybrid-Type PPDEs. For this, we specify the general setting of a single parameter PPDE into
a setting with two different parameter sets $\Theta \subset \R^p$ and $\Theta' \subset \R^q$ for
$p,q \in \N$ respectively.
The results of this paper mostly do not necessitate the following assumption, though
its assertion justifies the usage of any DOD-based ROM instead of simply using the POD for reduction
purposes.

\begin{assumption} \label{assumption KnW decay}
    Let $\mathcal{S}$ 
    denote the \emph{total solution manifold} and $\mathcal{S}_{\mu, t} := 
    \{ \uht \in \R^{N_h} \, \vert \, \nu \in \Theta' \}$ denote the \emph{solution 
    submanifold} for a fixed tuple $(\mu, t) \in \Theta \times [0, T]$, then we assume:
    \begin{enumerate}
\item The linear Kolmogorov $N$-width of $\Scal$ decays slowly, i.e., $d_N(\Scal) \leq C N^{-\alpha}$ for $0 < \alpha \leq 1$ and $C > 0$.
\item The linear Kolmogorov $N$-width of $\Scal_{\mu, t}$ decays quickly for all $\mu \in \Theta$ and $t \in [0, T]$, i.e., $\sup_{(\mu, t) \in \Theta \times [0, T]} d_N(\Scal_{\mu, t}) \leq C' N^{-\beta}$ for $\beta > 1$ and $C' > 0$.
\end{enumerate}
\end{assumption}

From now on, we consistently rely on the following assumptions.
\begin{assumption} \label{assumption sample}    
    We assume that all $N_s \in \N_{\geq 2}$ parameter data points for the
    solution trajectory data set are sampled 
    independent and identically distributed in the joint parameter space $\Theta \times \Theta'$.
    For the time variable a uniform grid 
    $\Tcal := \{\Delta t, 2 \Delta t, \dots, N_t \Delta t \}$, $\Delta t = T / N_t, N_t \in \mathbb{N}_{\geq 2}$ is used.
    $N_{s_1}$ and $N_{s_2}$ 
    are the respective number of random samples in $\Theta$ and $\Theta'$,
    $N_s := N_{s_1} N_{s_2} \in \mathbb{N}$ is the total sample size.
    The overall data size is $N_\text{data} := N_s N_t$ .
\end{assumption}

\begin{assumption} \label{assumption parameter-to-solution map}
    Let $\mathcal{G}: \Theta \times \Theta' \times [0, T] \to \R^{N_h}$ be the parameter-to-solution map, 
    mapping $(\mu, \nu, t) \mapsto \uht$. Then, we assume that
    \begin{enumerate}
        \item $m := 
        \underset{(\mu, \nu, t) \in \Theta \times \Theta' \times [0, T]}{\operatorname{ess}\, \inf}
        \norm{\uht} > 0, \,$ and $ M := 
        \underset{(\mu, \nu, t) \in \Theta \times \Theta' \times [0, T]}{\operatorname{ess}\, \sup}
        \norm{\uht} < \infty$,
        \item $\mathcal{G}$ is Lipschitz-continuous with the constant $L > 0$.
    \end{enumerate}
\end{assumption}

There is a subliminal assumption in the definition of such a parameter-to-solution map; the solution 
$u_h(\cdot, \cdot \, ; \mu, \nu)$ exists and is unique. This is meant, at least, in an $L^2$-embedded sense, 
since the time-derivative can be relaxed using a variational argument. 

\begin{assumption} \label{assumption perfect embedding}
    Let $N \in \N$ be fixed.
    We assume that for $s, s'$ sufficiently large, there are two maps 
    $\psi_*: \R^n \to \R^N$ and $\psi_*': \R^N \to \R^n$ for an $n \in \N$ with $n \leq 2(p + q) + 3$, 
    that still attain a nonlinear embedding without loss of the reduced solution manifold, i.e.
    for $\mathcal{S}_N := \{ q(\mu, \nu, t) = \A^T u(\mu, \nu, t) \, \vert \, (\mu, \nu, t) 
    \in \Theta \times \Theta' \times \Tcal \}$, where $\A \in \R^{N_h \times N}$ is the POD matrix,
    \begin{equation*}
        \sup\limits_{u \in S_{N}} 
        \euknorm{u - \psi_*(\psi_*'(u))} = 0.
    \end{equation*}
\end{assumption}

Assumption (\ref{assumption sample}) is required
for the derivation of any convergence result to a true parameter-to-solution map, 
when sampling an ever-growing number of examples. 
Assumption (\ref{assumption parameter-to-solution map})
imposes regularity on the mentioned parameter-to-solution map. It ensures
the existence of a true solution of the discrete problem for any specified parameter choice. 
Assumption (\ref{assumption perfect embedding}) gives us the ability to later meaningfully 
use an Autoencoder to achieve a reduction. Its application is justified with results from \cite{franco2023}.

\section{Deep Orthogonal Decomposition and resulting Reduced Order Models} \label{section: data-driven ROMs}

The data driven ROMs that we will introduce and study in the following make use of 
a hierarchy of reduced dimensions. We make the following assumption. 

\begin{assumption}[Dimension Hierarchy] \label{assumption: dimension hierarchy}
    Let the following natural number hierarchy be fixed
    \begin{equation*}
        n < N' < N < N_A \leq N_h.
    \end{equation*}
\end{assumption}

Before introducing our novel approaches based on the DOD, let us fix some notation and concepts for neural networks.

\subsection{Neural Networks and Statistical Learning}

We generally use the conventions and definitions of neural networks and their composites 
according to \textcite{gühring2019nn}. 
A neural network $\Phi$ with
input dimension $d$ and $L$ layers is given by
$\Phi = ((A_1, b_1), \dots, (A_L, b_L))$, such that $N_0 = d$ and $N_1, \dots, N_L \in \N$, where each $A_l$ is
an $N_l \times \sum_{k=0}^{l-1} N_k$ dimensional matrix and $b_l \in \R^{N_l}$. We further define $R_\rho(\Phi): \R^d \to \R^{N_L}$ as a \emph{realization of $\Phi$ with activation function $\rho : \R \to \R$} as the result of the following scheme for some input $x_0 := x \in R^d$:
\begin{align*}
    x_l := \rho \left( A_l \begin{bmatrix}[c|c|c]x_0^T & \cdots & x_{l-1}^T\end{bmatrix}^T + b_l \right),  l = 1, \dots, L-1, \quad
    x_L := A_L \begin{bmatrix}[c|c|c]x_0^T & \cdots & x_{l-1}^T\end{bmatrix}^T + b_L,
\end{align*}
where $\rho$ acts entry-wise. Let $\Phi^2$ be a neural network with the 
output dimension of the input dimension of the neural network $\Phi^1$, then we denote with $\Phi^1 \odot \Phi^2$ the
formal concatenation of both networks according to \cite[Remark 2.8]{gühring2019nn}. Moreover, we denote a neural network
architecture $\Acal_\Phi$ as a neural network with $\{0, 1\}$-valued entries. Two networks are said to have the same
architecture, if their non-zero entries are the same.
If $\Phi^1$ and $\Phi^2$ have the
same input dimension, then $P(\Phi^1, \Phi^2)$ denotes the formal parallelization of the networks 
according to \cite[Lemma 2.9]{gühring2019nn}.
We use the ReLU activation function $\rho(x) = x \mathbb{1}_{x \geq 0}$ for all theoretical results, as
justified by \textcite{hornik1991nn}.\\

{\bf Supervised Learning.} 
We consider a feature space $\Xcal$, a label space $\mathcal{Y}$, and an underlying true classification function $f$. 
Given a sample set $\{x_i, f(x_i)\}_{i=1}^N \subset \Xcal \times \mathcal{Y}$, we define a loss function $\Lcal$ as a map that sends all maps $\hat{f} : \Xcal \to \mathcal{Y}$ to $[0, \infty)$. The loss function must satisfy $\Lcal(f \, \vert \, \{x_i, f(x_i)\}_{i=1}^N) = 0$.
To separate statements about complexity from realistic attainability, we introduce a notion for neural network weights. This is motivated by the specifications for training, as it can highlight the difference in training one module while using another "frozen" embedded neural network.

\begin{definition}[Neural Network Weights] \label{def: neural network weights}
   Let $\Phi$ be a neural network with input dimension $d$, $L$ layers, and output dimension $N_L$. We define the \emph{masks of $\Phi$} as:
    $I_l := \{ (i, j) \, : \, [A_l]_{ij} \neq 0 \},  J_l := \{i \, : \, [b_l]_i \neq 0 \},$ 
    for each layer $1 \leq l \leq L$. We further set the \emph{admissible weight space} as
    \begin{equation*}
        \Theta(\Phi) := \prod_{l=1}^L \left( \R^{I_l} \times \R^{J_l} \right),
    \end{equation*}
    and its elements $\theta \in \Theta(\Phi)$ with
    \begin{equation*}
        \theta = \left( (\theta_{l, ij}^A)_{(i, j) \in I_l}, (\theta_{l, i}^b)_{i \in J_l} \right)_{l=1}^L.
    \end{equation*}
    Then $\Phi(\theta) = 
    ((\hat A_1(\theta), \hat b_1(\theta)), \dots, (\hat A_L(\theta), \hat b_L(\theta)))$
    can be uniquely defined for a choice of $\theta \in \Theta(\Phi)$ via the lifting
    \begin{equation*}
        \hat A_l(\theta)_{ij} = 
        \begin{cases}
            \theta^A_{l,ij}, & (i,j)\in I_l,\\
            0, & \text{otherwise},
        \end{cases}
        \qquad
        \hat b_l(\theta)_{i} =
        \begin{cases}
            \theta^b_{l,i}, & i\in J_l,\\
            0, & \text{otherwise}.
        \end{cases}
    \end{equation*}
    This allows us to classify realizations of a neural network uniquely by
    a map, when knowing
    its architecture and a choice of neural network weights 
    $\theta \in \Theta(\Phi)$, via
    $ 
        R_\rho(\Phi(\theta)) : \R^d \to \R^{N_L}.
    $ 
    In particular, if $\Phi$ has architecture $\Acal_\Phi$, then there is $e \in \Theta(\Phi)$,
    such that $\Phi(e) = \Acal_\Phi$.

\end{definition}


\subsection{Deep Learning ROMs and Deep Orthogonal Decomposition}

As a benchmark for all models used in this paper, we introduce the \emph{POD-DL-ROM} architecture, 
which is based on the \emph{DL-ROM} introduced in \cite{fresca2020}. Our primary reference for the POD-DL-ROM is \textcite{fresca2022}. However, we deviate slightly from the original definition to achieve an estimate close to the high-fidelity solution in an $\Hcal_h$-sense by applying the mass matrix $\G$.

The POD-DL-ROM architecture offers several benefits, including scalability in $N_h$ a
nd reduced computational cost for offline training. 
This is achieved by employing a POD (see Definition (\ref{def: POD})) on the stacked solution trajectories
\begin{equation*}
    U := \left[u_h^{\mu_1, \nu_1} \big\vert \cdots \big\vert u_h^{\mu_{N_{s_1}}, \nu_1} \big\vert \cdots \big\vert 
    u_h^{\mu_{N_{s_1}}, \nu_{N_{s_2}}}\right]^T \in \R^{N_h \times N_\text{data}},
\end{equation*}
of the sample solution set, resulting in the POD matrix 
$\A_\text{P} \in \R^{N_h \times N}$ for $N < N_h$ from Assumption (\ref{assumption: dimension hierarchy}). 
Using the POD matrix, we define the reduced solution snapshots as
\begin{equation*}
    u_\text{red}^{\mu, \nu, t} := \A_\text{P} ^T \G \uht \in \R^{N} \qquad \text{for all } (\mu, \nu, t) \in \Pcal_1 \times \Pcal_2 \times \Tcal.
\end{equation*}
By $\G$-orthonormality of $\A_\text{P}$, we define the approximate coefficient solution $\uhthat \in \R^{N_h}$ for each $\mu \in \Theta$, $\nu \in \Theta'$, and $t \in \Tcal$ as
\begin{equation*}
\uhthat = \A_\text{P}  \cdot \left(R_\rho(\Psi_N) \circ R_\rho(\Phi_n)\right) (\mu, \nu, t) \in \R^{N_h},
\end{equation*}
where $\Psi_N = f^\text{D}_{N}$ is the decoder architecture with input dimension $n$ and output dimension $N$
($\Psi'_n = f^\text{E}_n$ the corresponding encoder architecture) of a convolutional Auto Encoder (AE). 
Then $f^\text{E}_n$ denotes the projection onto the reduced trial solution manifold, and we 
let $\Phi_n = \Phi_n^\text{DF}$  
be the DFNN architecture of input dimension $p + q + 1$ and output dimension $n$.

The use of the low-dimensional Euclidean norm is justified by Proposition \ref{prop: switch to euclidean norm}, which allows us to ignore the averaging over all deterministic POD projection errors for training without misrepresenting the solution as a bare vector in Euclidean space.

Hence, with the neural network weights $\theta = 
(\theta_\text{D}, \theta_\text{E}, \theta_\text{DF}) 
\in \Theta(\Psi_N) \times \Theta(\Psi_n') \times \Theta(\Phi_n)$ we define the loss function of the POD-DL-ROM by
\begin{multline*}
    \Lcal_\text{POD-DL-ROM}\left(R_\rho(P(\Psi_N \odot \Phi_n, \Psi'_n)(\theta)) \big\vert \{\mu_i, \nu_j, k \Delta t\}_{i,j,k}, 
    \{u_\text{red}^{\mu_i, \nu_j, k \Delta t} \}_{i,j,k}\right) =  \\
        \frac{1}{N_\text{data}} \sum_{i=1}^{N_{s_1}} \sum_{j=1}^{N_{s_2}} 
        \sum_{k = 1}^{N_t} \, \bigg( \frac{\omega_h}{2} \, 
        \euknorm{u_\text{red}^{\mu_i, \nu_j, k \Delta t} - 
        \left(R_\rho(\Psi_N(\theta_\text{D})) \circ R_\rho(\Phi_n(\theta_\text{DF}))\right) (\mu_i, \nu_j, k \Delta t)}^2 \\
        + \, \frac{1 - \omega_h}{2} \, \euknorm{R_\rho(\Psi'_n(\theta_\text{E}))(u_\text{red}^{\mu_i, \nu_j, k \Delta t}) 
        - R_\rho(\Phi_n(\theta_\text{DF}))(\mu_i, \nu_j, k \Delta t)}^2 \bigg).
\end{multline*}

To overcome the limitations of using a linear projector on the full solution manifold, we can employ a $(\mu, t)$-sensitive neural network to find an adaptive basis for projection purposes. This approach can lead to a further reduction in dimensionality $N' < N$ compared to the POD, while maintaining a similar energy loss, due to the hybrid structure of the solution submanifold.
We introduce the concept of Deep Orthogonal Decomposition (DOD), which combines the strengths of POD and neural networks to achieve a more efficient reduction of the solution manifold.
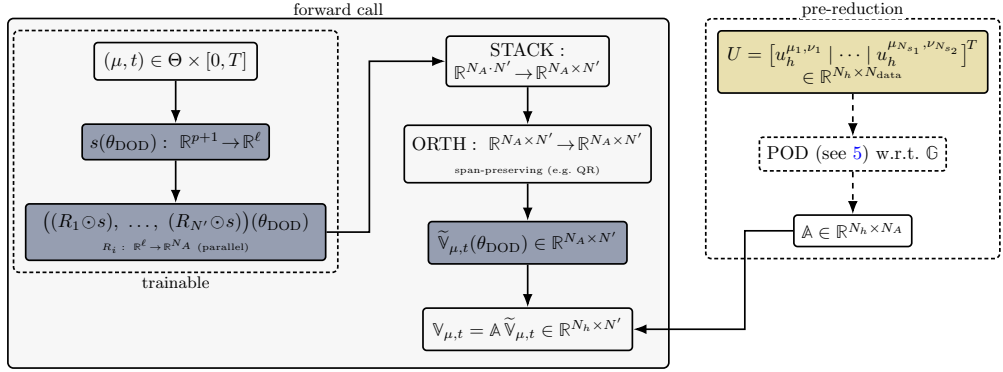
\begin{figure}[t]
    \centering
    \begin{adjustbox}{max width=\linewidth}
    \begin{tikzpicture}[
      >=Latex,
      node distance=9mm and 12mm,
      every path/.style={line width=.9pt},
      box/.style={draw, rounded corners=2pt, align=center, inner sep=5pt},
      head/.style={draw, rounded corners=2pt, align=center, inner sep=6pt, fill=black!5},
      small/.style={font=\footnotesize},
      tiny/.style={font=\scriptsize},
      lbl/.style={font=\scriptsize, inner sep=1pt},
      op/.style={draw, align=center, inner sep=4pt, rounded corners=2pt},
      dashedop/.style={op, dash pattern=on 2.1pt off 1.4pt},
      transform shape
    ]
  
    \node[box, anchor=west, fill=black!3] (INPT) at (0,0) {$(\mu,t)\in\Theta\times[0,T]$};
  
    \node[op, below=8mm of INPT, fill=cividisDark, fill opacity=0.5, text opacity=1] (SNET) {$s(\theta_\text{DOD}):\ \R^{p+1}\!\to\!\R^{\ell}$};
    \draw[->] (INPT) -- (SNET);
  
    \node[op, below=8mm of SNET, minimum width=56mm, fill=cividisDark, fill opacity=0.5, text opacity=1] (PAR) {$\big((R_1\!\odot\! s),\ \dots,\ (R_{N'}\!\odot\! s)\big)(\theta_\text{DOD})$\\
      \tiny $R_i:\ \R^{\ell}\!\to\!\R^{N_A}$ (parallel)};
    \draw[->] (SNET) -- (PAR);
  
    \node[op, anchor=west] (STACK) at ([xshift=35mm]INPT.east) {$\text{STACK}:$\\$\R^{N_A\cdot N'}\!\to\!\R^{N_A\times N'}$};
    \node[op, below=7mm of STACK] (ORTH) {$\text{ORTH}:\ \R^{N_A\times N'}\!\to\!\R^{N_A\times N'}$\\\tiny span-preserving (e.g.\ QR)};
    \node[box, below=7mm of ORTH, fill=cividisDark, fill opacity=0.5, text opacity=1] (Vt) {$\tilde{\V}_{\mu,t}(\theta_{\text{DOD}})\in\R^{N_A\times N'}$};
    \node[box, below=8mm of Vt, fill=black!3] (V) {$\V_{\mu,t}=\A\,\tilde{\V}_{\mu,t}\in\R^{N_h\times N'}$};

    \node[box, anchor=north west, fill=cividisLight, fill opacity=0.5, text opacity=1] (U) at ([xshift=20mm,yshift=1mm]STACK.north east)
        {$U=\big[u_h^{\mu_1,\nu_1}\mid\cdots\mid u_h^{\mu_{N_{s_1}},\nu_{N_{s_2}}}\big]^{\!T}$\\
         $\in\R^{N_h\times N_{\text{data}}}$};
    \node[dashedop, below=8mm of U] (POD) {POD (see \ref{def: POD}) w.r.t.\ $\G$};
    \node[op, below=8mm of POD] (A) {$\A\in\R^{N_h\times N_A}$};
  
    \draw[->, dashed] (U) -- (POD);
    \draw[->, dashed] (POD) -- (A);
  
    \draw[->] (PAR.east) -- ++(8mm,0) |- (STACK.west);
    \draw[->] (STACK) -- (ORTH);
    \draw[->] (ORTH) -- (Vt);
    \draw[->] (Vt) -- (V);
    \draw[->] (A.west) -- ++(-9mm,0) |- (V.east);
  
    \begin{pgfonlayer}{background}
      \node[draw, rounded corners=3pt, dash pattern=on 2.1pt off 1.4pt, inner sep=6pt,
            fit=(U)(POD)(A)] (OFFBOX) {};
    \end{pgfonlayer}
    \node[tiny, fill=white, inner sep=1pt] at ([yshift=1.6mm]OFFBOX.north) {pre-reduction};
  
    \begin{pgfonlayer}{background}
      \node[draw, rounded corners=3pt, inner sep=9pt, fill=black!3,
            fit=(INPT)(SNET)(PAR)(STACK)(ORTH)(Vt)(V)] (ONBOX) {};
    \end{pgfonlayer}
    \node[tiny, fill=white, inner sep=1pt] at ([yshift=1.6mm]ONBOX.north) {forward call};

    \begin{pgfonlayer}{background}
        \node[draw, rounded corners=3pt, dash pattern=on 2.1pt off 1.4pt, inner sep=6pt,
              fit=(INPT)(SNET)(PAR)] (TRAIN) {};
    \end{pgfonlayer}
    \node[tiny, fill=black!3, inner sep=1pt] at ([yshift=-1.6mm]TRAIN.south) {trainable};
  
    \end{tikzpicture}
    \end{adjustbox}
    \caption{Realization of the Deep Orthogonal Decomposition for given $\theta_\text{DOD} \in \Theta(\Phi_{\tilde{\V}})$.}
    \label{fig:dod-matrix-workflow}
  \end{figure}
\begin{definition}[Deep Orthogonal Decomposition] 
    \label{def: DOD}
    Recall  the dimensional hierarchy of Assumption (\ref{assumption: dimension hierarchy}).
    We employ a POD (according to Definition \ref{def: POD}) with $\G$
    given as above, using
    the stacked solution trajectories $U \in \R^{N_h \times N_\text{data}}$
    This results in the POD matrix $\A \in \R^{N_h \times N_A}$.
    Next, we set up a Neural Network architecture 
    $\Phi_{\tilde{\V}}$ with input dimension $p + 1$ and output dimension $N_A \times N'$,
    such that
    \begin{equation*}
        \Phi_{\tilde{\V}} = \text{ORTH} \circ \text{STACK} \circ (P((R_1 \odot s), \dots, (R_N \odot s))),
    \end{equation*}
    where $P$ is the network parallelization operator.
    We have the following components:
    \begin{enumerate}
        \item $s$ is a Deep Feed Forward Neural Network architecture with input dimension $p + 1$ and output dimension $l$,
        \item $R_1, \dots, R_N$ are a collection of Deep Feed Forward Neural Network architectures
        with input dimensions $l$ and output dimensions $N_A$,
        \item $\text{STACK}: \R^{N_A \cdot N'} \to \R^{N_A \times N'}$ is a canonical stacking operator, 
        \item $\text{ORTH}: \R^{N_A \times N'} \to \R^{N_A \times N'}$ is an orthonormalization unit, that orthonormalizes
        any given matrix $W \in \R^{N_A \times N'}$ into $\tilde{W} \in \R^{N_A \times N'}$ such that 
        $\text{span}(W) = \text{span}(\tilde W)$. This is most often done using a QR decomposition \cite{golubvanloan2013}. 
    \end{enumerate}
    The realization of the network $\Phi_{\tilde{\V}}$ for some $\mu \in \Theta$, $t \in [0, T]$ and
    any chosen neural network weight $\theta_\text{DOD} \in \Theta(\Phi_{\tilde{\V}})$ 
    according to Definition \ref{def: neural network weights} is abbreviated as
    \begin{equation} \label{eq: inner module DOD}
        \tilde{\V}_{\mu, t}(\theta_\text{DOD}) := R_\rho(\Phi_{\tilde{\V}}(\theta_\text{DOD}))(\mu, t).
    \end{equation}
    For any choice of weights, this is called the inner DOD module.

    \emph{The Deep Orthogonal Decomposition} can be achieved by upward projection under the POD matrix $\A$ of 
    the inner DOD module. For a choice of weights
    $\theta_\text{DOD} \in \Theta(\Phi_{\tilde{\V}})$ and parameters
    $\mu \in \Theta$ and $t \in [0, T]$, the DOD linear projector is given by
    \begin{equation} \label{eq: DOD}
        \V_{\mu, t}(\theta_\text{DOD}) := \A \cdot \tilde{\V}_{\mu, t}(\theta_\text{DOD}) \in \R^{N_h \times N'}.
    \end{equation}
    This process is visualized in Figure \ref{fig:dod-matrix-workflow}.
\end{definition}

While this definition is a crucial step, it does not guarantee a "good" DOD. To ensure the quality of the DOD, we introduce a qualitative loss function and collect relevant data through pre-reduction, similar to our previous approach. We define
\begin{equation*}
    u_\text{pre-red}^{\mu, \nu, t} := \A^T \G \uht \in \R^{N_A} \qquad \text{for all } (\mu, \nu, t) \in \Pcal_1 \times \Pcal_2 \times \Tcal.
\end{equation*}
We then formulate the optimization problem by minimizing over $\theta_\text{DOD} 
\in \Theta(\Phi_{\tilde{\V}})$ the loss function 
\begin{multline*}
    \Lcal_\text{DOD}\left(R_\rho(\Phi_{\tilde{\V}}(\theta_\text{DOD})) \big\vert \{\mu_i, \nu_j, k \Delta t\}_{i,j,k}, 
    \{u_\text{pre-red}^{\mu_i, \nu_j, k \Delta t} \}_{i,j,k}\right) \\
    = \frac{1}{N_\text{data}}\sum_{i=1}^{N_{s_1}} \sum_{j=1}^{N_{s_2}} 
    \sum_{k = 1}^{N_t} \, \euknorm{u_\text{pre-red}^{\mu_i, \nu_j, k \Delta t} - 
    \tilde{\V}_{\mu_i, k \Delta t}(\theta_\text{DOD}) \tilde{\V}^T_{\mu_i, k \Delta t}(\theta_\text{DOD}) 
    u_\text{pre-red}^{\mu_i, \nu_j, k \Delta t}}^2.
\end{multline*}

Note that this loss function is $N_A$-dimensional, thanks to the $\G$-orthonormality of the POD matrix and Proposition \ref{prop: switch to euclidean norm}. We denote the optimized neural network weights as $\theta_\text{DOD}^* \in \Theta(\Phi_{\tilde{\V}})$.
At this point, we have not yet searched for the approximate parameter-to-solution map $\Gcal$. After all, the frozen DOD is a linear orthogonal projector, not an estimation of $\Gcal$, and can be seen as an analog to the POD. 
We now propose two novel data-driven ROMs, based on the DOD.

\subsection{DOD+DFNN} \label{subsec: DOD+DFNN}

Let $\Phi_{N'} := \Phi_{N'}^\text{DF}$ 
be some Deep Feed-Forward Neural Network architecture
with input dimension $p+q+1$ and output dimension $N'$. Then the approximate parameter-to-solution map 
of the \emph{DOD+DFNN}
can be given for each $(\mu, \nu, t) \in \Theta \times \Theta'
\times [0, T]$ via
\begin{equation*}
    \uhthat = \A \cdot R_\rho(\Phi_{\tilde{\V}})(\mu, t) \cdot R_\rho(\Phi_{N'})(\mu, \nu, t) \in \R^{N_h}.
\end{equation*}

The training is done in series and not in parallel.
We first find the optimal DOD parameter $\theta^*_\text{DOD} \in \Theta(\Phi_{\tilde{\V}})$ by 
solving the optimization problem
in Definition \ref{def: DOD}. This freezes the DOD, and we set $\V_{\mu, t} := \V_{\mu, t}(\theta^*_\text{DOD})$ for 
$(\mu, t) \in \Theta \times [0, T]$.

Next, we use the frozen DOD matrix to reduce the training data
\begin{equation*}
    u_{\text{red}^2}^{\mu, \nu, t} := \V_{\mu, t}^T \G \uht \in \R^{N'} 
    \qquad \text{for all } (\mu, \nu, t) \in \Pcal_1 \times \Pcal_2 \times \Tcal.
\end{equation*}
The optimization problem for $\theta_\text{DF} \in \Theta(\Phi_{N'})$
is given by minimizing the loss function
\begin{multline*}
    \Lcal_\text{DOD+DFNN}\left(R_\rho(\Phi_{N'}(\theta_\text{DF})) \big\vert \{\mu_i, \nu_j, k \Delta t\}_{i,j,k}, 
    \{u_{\text{red}^2}^{\mu_i, \nu_j, k \Delta t} \}_{i,j,k}\right) \\
    = \frac{1}{N_\text{data}}\sum_{i=1}^{N_{s_1}} \sum_{j=1}^{N_{s_2}} 
    \sum_{k = 1}^{N_t} \, \euknorm{u_{\text{red}^2}^{\mu_i, \nu_j, k \Delta t} - 
    R_\rho(\Phi_{N'}(\theta_\text{DF}))(\mu_i, \nu_j, k \Delta t)}^2.
\end{multline*}
This is a rather obvious extension of \cite{franco2024}, but works surprisingly well.

\subsection{DOD-DL-ROM} \label{subsec: dod-dl-rom}

To define the approximate parameter-to-solution map of the DOD-DL-ROM, we introduce a decoder architecture $\Psi_{N'} := f_{N'}^\text{D}$ with input dimension $n$ and output dimension $N'$, and its corresponding Encoder $\Psi'_n := f_n^\text{E}$. 
Let $\Phi_n := \Phi_n^\text{DF}$ denote a DFNN architecture with input dimension $p+q+1$ and output dimension $n$. 
Then the \emph{DOD-DL-ROM} map
is given for each $(\mu, \nu, t) \in \Theta \times \Theta'
\times [0, T]$ via
\begin{equation*}
    \uhthat = \A \cdot R_\rho(\Phi_{\tilde{\V}})(\mu, t) \cdot 
    \left(R_\rho(\Psi_{N'}) \circ R_\rho(\Phi_n)\right) (\mu, \nu, t) \in \R^{N_h}.
\end{equation*}

The training process for the DOD-DL-ROM is similar to that of the DOD+DFNN, where we first find the optimal DOD parameter $\theta^*_\text{DOD} \in \Theta(\Phi_{\tilde{\V}})$ by solving the optimization problem in Definition \ref{def: DOD}. This freezes the DOD, and we set $\V_{\mu, t} := \V_{\mu, t}(\theta_\text{DOD}^*)$ for $(\mu, t) \in \Theta \times [0, T]$.
We then reduce the training data using
\begin{equation*}
    u_{\text{red}^2}^{\mu, \nu, t} := \V_{\mu, t}^T \G \uht \in \R^{N'} \qquad 
    \text{for all } (\mu, \nu, t) \in \Pcal_1 \times \Pcal_2 \times \Tcal.
\end{equation*}
The optimization problem for the DOD-DL-ROM for $\theta = 
(\theta_\text{DOD}, \theta_\text{D}, \theta_\text{DF}) \in 
\Theta(\Phi_{\tilde{\V}}) \times \Theta(f_N^\text{D}) \times \Theta(\Phi_n^\text{DF})$ is given by minimizing the loss function
\begin{multline*}
    \Lcal_\text{DOD-DL-ROM}\left(R_\rho(P(\Psi_{N'} \odot \Phi_n, \Psi'_n)(\theta)) \big\vert \{\mu_i, \nu_j, k \Delta t\}_{i,j,k}, 
    \{u_{\text{red}^2}^{\mu_i, \nu_j, k \Delta t} \}_{i,j,k}\right) =  \\
        \frac{1}{N_\text{data}} \sum_{i=1}^{N_{s_1}} \sum_{j=1}^{N_{s_2}} 
        \sum_{k = 1}^{N_t} \, \bigg( \frac{\omega_h}{2} \, 
        \euknorm{u_{\text{red}^2}^{\mu_i, \nu_j, k \Delta t} - 
        \left(R_\rho(\Psi_{N'}(\theta_\text{D})) \circ R_\rho(\Phi_n(\theta_\text{DF}))\right) (\mu_i, \nu_j, k \Delta t)}^2 \\
        + \, \frac{1 - \omega_h}{2} \, \euknorm{R_\rho(\Psi'_n(\theta_\text{E}))(u_{\text{red}^2}^{\mu_i, \nu_j, k \Delta t}) 
        - R_\rho(\Phi_n^\text{DF}(\theta_\text{DF}))(\mu_i, \nu_j, k \Delta t)}^2 \bigg).
\end{multline*}
This reduction allows us to use an algorithm that only requires the $n$- and $N'$-dimensional Euclidean norm, 
resulting in lower computational complexity for each training epoch, assuming an already optimized DOD.

\section{Error and complexity analysis} \label{sec:error_analysis}

\subsection{Error decomposition and convergence analysis} \label{section: error decomposition}

To analyze the ROMs in question, it is essential to recognize the different aspects of error inherent to their deployment. Specifically, we can subdivide the inaccuracy using the triangle inequality, which occurs in both the POD- and DOD-based data-driven ROMs. This separation allows us to distinguish between the error of suboptimal projection and the error of suboptimal latent parametric dynamics.

Note that the general handling of "DOD-based" (DOD+DNN) or "POD-based" (POD+DNN) refers to any architecture that relies on a projection by DOD or POD and uses a neural network of any architecture to fit the coefficients for the modes.

Before formulating and proving the decompositions, we predefine some elements of the corresponding theorems to make them less cumbersome to read. The visual portrayal of their workflow is given by Figure \ref{fig:dimensional-workflow-pod-dod}.

Let $n, N', N, N_A$ and $N_h$ be the fixed natural number hierarchy from 
Assumption \ref{assumption: dimension hierarchy}. 
We define the approximate POD-based parameter-to-solution map as follows:
\begin{equation*}
    \Gcal_\text{POD+DNN}: \Theta \times \Theta' \times [0, T] \to \R^{N_h}; \quad (\mu, \nu, t) \mapsto 
    \A_\text{P} \cdot \hat{q}_\text{P}(\mu, \nu, t),
\end{equation*}
where $\A_\text{P} \in \R^{N_h \times N}$ is a POD computed by Definition \ref{def: POD} for the dimension $N$ with the mass matrix $\G$, and
\begin{equation*}
    \hat{q}_\text{P} := R_\rho(\Phi^\text{P}_N(\theta_\text{PDNN})) : \R^{p+q+1} \to \R^N
\end{equation*}
is the realization of some neural network $\Phi^\text{P}_N$ with input dimension $p + q + 1$ and output dimension $N$,
and any neural network weights $\theta_\text{PDNN} \in \Theta(\Phi^\text{P}_N)$. 
Similarly, we define the approximate DOD-based parameter-to-solution map as follows:
\begin{equation*}
    \Gcal_\text{DOD+DNN}: \Theta \times \Theta' \times [0, T] \to \R^{N_h}; \quad (\mu, \nu, t) \mapsto 
    \V_{\mu, t} \cdot \hat{q}_\text{D}(\mu, \nu, t),
\end{equation*}
where 
\begin{equation*}
    \V_{\mu, t} := \A \cdot \tilde{\V}_{\mu, t}(\theta_\text{DOD}) \in \R^{N_h \times N'},
    \quad \text{for } (\mu, t) \in \Theta \times [0, T],
\end{equation*}
and any neural network weights $\theta_\text{DOD} \in \Theta(\Phi_{\tilde{\V}})$ according to Definition 
\ref{def: DOD} and the used POD matrix $\A \in \R^{N_h \times N_A}$. Further, let
\begin{equation*}
    \hat{q}_\text{D} := R_\rho(\Phi^\text{D}_{N'}(\theta_\text{DDNN})) : \R^{p+q+1} \to \R^{N'}
\end{equation*}
be the realization of some neural network $\Phi^\text{D}_{N'}$ with input dimension $p + q + 1$ and output dimension $N'$,
and any neural network weights $\theta_\text{DDNN} \in \Theta(\Phi^\text{D}_{N'})$.

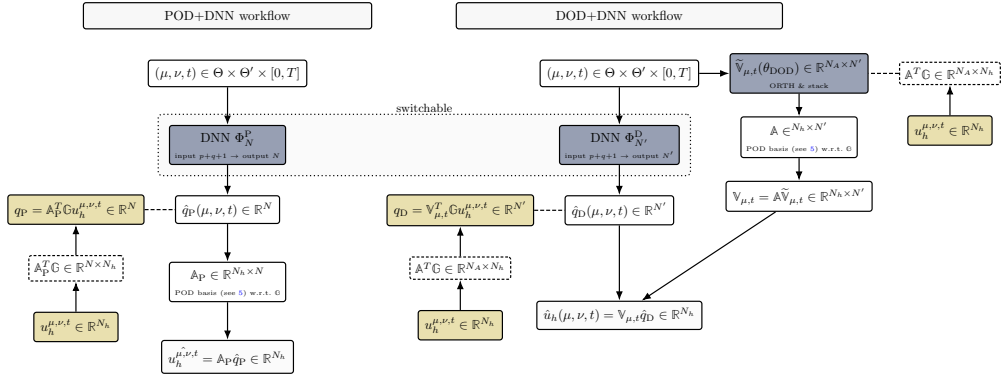
\begin{figure}[tb]
    \centering
    \begin{adjustbox}{max width=\linewidth}
    \begin{tikzpicture}[
      >=Latex,
      node distance=10mm and 14mm,
      every path/.style={line width=.9pt},
      every node/.append style={fill opacity=0.5, text opacity=1},
      box/.style={draw, rounded corners=2pt, align=center, inner sep=5pt},
      head/.style={draw, rounded corners=2pt, align=center, inner sep=6pt, fill=black!5},
      small/.style={font=\footnotesize},
      tiny/.style={font=\scriptsize},
      lbl/.style={font=\scriptsize, inner sep=1pt},
      op/.style={draw, align=center, inner sep=4pt, rounded corners=2pt},
      dashedop/.style={op, dash pattern=on 2.1pt off 1.4pt},
      transform shape
    ]
    \node[head, minimum width=7.6cm] (PODHEAD) {POD+DNN workflow};
    \node[head, minimum width=8.6cm, right=22mm of PODHEAD] (DODHEAD) {DOD+DNN workflow};
    \node[box, below=8mm of PODHEAD, xshift=0mm] (INP1) {$(\mu,\nu,t)\in\Theta\times\Theta'\times[0,T]$};
    \node[box, below=8mm of DODHEAD] (INP2) {$(\mu,\nu,t)\in\Theta\times\Theta'\times[0,T]$};
  
    \node[op, below=10mm of INP1, fill=cividisDark] (DNNP) {DNN $\Phi^{\mathrm{P}}_{N}$\\\tiny input $p{+}q{+}1$ $\rightarrow$ output $N$};
    \node[box, below=8mm of DNNP] (QP) {$\hat q_{\mathrm{P}}(\mu,\nu,t)\in\mathbb{R}^{N}$};
  
    \node[op, below=10mm of QP] (AP) {$\A_{\mathrm{P}}\in\mathbb{R}^{N_h\times N}$\\\tiny POD basis (see \ref{def: POD}) w.r.t. $\G$};
  
    \node[box, below=10mm of AP, fill=black!3] (UHP) {$\hat \uht =  \A_{\text{P}}\hat q_{\text{P}} \in \R^{N_h}$};
  
    \node[box, left=8mm of QP, yshift=0mm, fill=cividisLight] (QPTrue) {$q_{\mathrm{P}}=\A_{\mathrm{P}}^{T}\G \uht \in \R^{N}$};
    \node[dashedop, below=8mm of QPTrue, yshift=0mm] (APTG) {$\A_{\mathrm{P}}^{T} \G \in \R^{N \times N_h}$};
    \node[box, below=8mm of APTG, yshift=0mm, fill=cividisLight] (UTrueP) {$\uht \in \R^{N_h}$};
  
    \draw[->] (INP1) -- (DNNP);
    \draw[->] (DNNP) -- (QP);
    \draw[->] (QP) -- (AP);
    \draw[->] (AP) -- (UHP);
  
    \draw[->] (UTrueP) -- (APTG);
    \draw[->] (APTG) -- (QPTrue);
    \draw[dash pattern=on 3pt off 1.5pt] (QPTrue.east) -- ($(QP.west)+(0,0)$);

    \node[op, below=10mm of INP2, fill=cividisDark] (DNND) {DNN $\Phi^{\mathrm{D}}_{N'}$\\\tiny input $p{+}q{+}1$ $\rightarrow$ output $N'$};
    \node[op, right=8mm of INP2, fill=cividisDark] (DODINNER) {$\tilde \V_{\mu,t}(\theta_{\mathrm{DOD}})\in\mathbb{R}^{N_A\times N'}$\\\tiny ORTH \& stack};

    \node[dashedop, right=8mm of DODINNER] (ATG) {$\A^{T} \G \in \R^{N_A \times N_h}$};
    \node[box, below=8mm of ATG, fill=cividisLight] (UTrueD2) {$\uht \in \R^{N_h}$};

    \node[op, below=6mm of DODINNER] (AUP) {$\A \in ^{N_h\times N'}$\\\tiny POD basis (see \ref{def: POD}) w.r.t. $\G$};
    \node[box, below=6mm of AUP] (VMT) {$\V_{\mu,t}=\A \tilde \V_{\mu,t} \in \R^{N_h\times N'}$};
  
    \node[box, below=8mm of DNND] (QD) {$\hat q_{\mathrm{D}}(\mu,\nu,t)\in\mathbb{R}^{N'}$};
  
    \node[box, below=20mm of QD, fill=black!3] (UHD) {$\hat u_h(\mu,\nu,t)= \V_{\mu,t} \hat q_{\mathrm{D}}\in\R^{N_h}$};
  
    \node[box, left=8mm of QD, fill=cividisLight] (QDTrue) {$q_{\mathrm{D}} = \V_{\mu,t}^T \G \uht \in \R^{N'}$};
    \node[dashedop, below=8mm of QDTrue] (VTTG) {$\A^{T} \G \in \R^{N_A \times N_h}$};
    \node[box, below=8mm of VTTG, fill=cividisLight] (UTrueD) {$\uht \in \R^{N_h}$};
  
    \draw[->] (INP2) -- (DODINNER);
    \draw[->] (DODINNER) -- (AUP);
    \draw[->] (AUP) -- (VMT);
    \draw[->] (INP2) -- (DNND);
    \draw[->] (DNND) -- (QD);
    \draw[->] (VMT) -- (UHD);
    \draw[->] (QD) -- (UHD);
  
    \draw[->] (UTrueD) -- (VTTG);
    \draw[->] (VTTG) -- (QDTrue);
    \draw[dash pattern=on 3pt off 1.5pt] (QDTrue.east) -- ($(QD.west)+(0,0)$);
    \draw[dash pattern=on 3pt off 1.5pt] (ATG) -- (DODINNER);
    \draw[->] (UTrueD2) -- (ATG);
 
    \begin{pgfonlayer}{background}
      \node[draw, rounded corners=3pt, dotted, line width=.9pt, fill=black!4, inner sep=8pt,
            fit=(DNNP)(DNND)] (SWBOX) {};
    \end{pgfonlayer}
    \node[tiny, fill=white, inner sep=1pt] at ([yshift=1.6mm]SWBOX.north) {switchable};

    \end{tikzpicture}
    \end{adjustbox}
    \caption{Dimensional workflow for POD+DNN (left) and DOD+DNN (right). 
    Solid arrows show the inference; dashed lines denote training.}
    \label{fig:dimensional-workflow-pod-dod}
  \end{figure}

We assume that we have a set of training data
\begin{equation*}
    \left\{ (\mu_j, \nu_j, t_j), u_j \right\}_{j=1}^{N_\text{data}} :=
    \left\{(\mu_i, \nu_j, k \Delta t), u_h^{\mu_i, \nu_j, k \Delta t}\right\}_{(i, j, k)=(1, 1, 1)}^{(N_{s_1}, N_{s_2}, N_t)}
    \! \subset (\Theta \times \Theta' \times [0, T]) \times \R^{N_h},
\end{equation*}
conforming to the label and feature notation above. 
The labels have been sampled in accordance with Assumption \ref{assumption sample}.

For both algorithms, we define the relative error as
\begin{equation} \label{eq: relative error}
    \Ecal_\text{R} := \left( \int_{\Theta \times \Theta' \times [0, T]} \frac{\norm{\uht - \uhthat}^2}{\norm{\uht}^2} 
    d(\mu, \nu, t)\right)^{1/2},
\end{equation}
where $\uhthat \in \R^{N_h}$ can be given either with regard to the 
approximate parameter-to-solution map of the DOD+DNN or the POD+DNN.

To follow a similar strategy of proof as in \textcite[Theorem 1]{brivio2023} for the POD+DNN, we need 
an auxiliary statement that precedes the similarity of the definition of the DOD projection error to the POD projection 
error. 
The key idea is to employ the Hornik Theorem to ensure the existence of a neural network architecture and corresponding neural 
network that finds an approximation for $s$. Here, $s(\mu, t)$ is the unique projector matrix that minimizes the objective functional
\begin{equation*}
    (\mu, t, \V) \mapsto \int_{\Theta'} \norm{\uht - \V \V^T \G \uht}^2 d (\nu),
\end{equation*}
for all choices of $\V \in \R^{N_h \times N'}$. 
Under the triangle inequality for the distance to the optimal minimizer of the objective functional, this can be separated 
into the mistake of fitting the network to 
the minimizer, which is arbitrarily small, and the mistake of the optimal minimizer. This is specifically 
the Kolmogorov $N'$-width under fixed $(\mu, t)$. Note that we also need to take into account the peculiar 
structure of the DOD, which uses pre-reduction via POD. This creates an additional error, which is zero in the case 
that the pre-reduction is skipped completely.
To address this, we define the optimal DOD as follows:
\begin{definition}[Optimal DOD] \label{def: DOD functional}
    Let $N', N_A, N_h \in \N$ be fixed by Assumption \ref{assumption: dimension hierarchy}. 
    Assume we have $\mu \in \Theta, t \in [0, T]$ and $\V = \A \tilde{\V} \in [-1, 1]^{N_h \times N'}$, with
    $\A \in \R^{N_h \times N_A}$ fixed being $\G$-orthonormal, and $\tilde{\V} \in [-1, 1]^{N_A \times N'}$. Then 
    we define $J: \Theta \times [0, T] \times [-1, 1]^{N_h \times N'} \to \R$ the \emph{objective DOD functional} via
    \begin{equation*}
        J(\mu, t, \V) := \int_{\Xcal} \norm{\uht - \V \V^T \G \uht}^2 d(\nu),
    \end{equation*}
    for any compact set $\Xcal \subset \Theta'$ with $\text{dim}(\Xcal) = q$.
    We also define a Borel measurable map $s: \Theta \times [0, T] \to [-1, 1]^{N_A \times N'}$ 
    that produces $\G$-orthonormal matrices given by
    \begin{equation*}
        s: (\mu, t) \mapsto \underset{{\tilde{\V} \in [-1, 1]^{N_A \times N'}}}{\text{argmin}}  
        J(\mu, t, \A \tilde{\V}),
    \end{equation*}
    which is called the \emph{optimal DOD} for $\Xcal$, if it is uniquely defined.
\end{definition}
We have the following lemma:
\begin{lemma} \label{lemma: DOD functional is lipschitz}
    Let $J: \Theta \times [0, T] \times [-1, 1]^{N_h \times N'} \to \R$ be an objective DOD functional
    for some $\Xcal \subset \Theta'$ with $\text{dim}(\Xcal) = q$ as in 
    Definition \ref{def: DOD functional},
    then under Assumption \ref{assumption parameter-to-solution map}, we have that $J$ is Lipschitz,
    and the optimal DOD $s$ exists.
\end{lemma}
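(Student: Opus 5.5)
We prove the Lipschitz estimate directly from Assumption \ref{assumption parameter-to-solution map} and then obtain existence of a minimizer by compactness. Write $P_{\V} := \V\V^T\G$ and $r(\mu,\nu,t,\V) := \uht - P_{\V}\uht$, so that $J(\mu,t,\V) = \int_{\Xcal} \norm{r(\mu,\nu,t,\V)}^2 \, d\nu$.

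\textbf{Step 1: uniform bound on the residual.} Since $\V \in [-1,1]^{N_h\times N'}$ ranges over a compact set, the operator norm of $P_{\V}$ with respect to $\norm{\cdot}$ is bounded by a constant $C_P$. This constant depends only on $N_h$, $N'$ and $\G$, through the equivalence of $\norm{\cdot}$ with the Euclidean norm. Together with $\norm{\uht}\le M$ this gives $\norm{r} \le (1+C_P)M$ for a.e.\ $(\mu,\nu,t)$.

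\textbf{Step 2: Lipschitz estimate.} Use $\bigl|\norm{a}^2-\norm{b}^2\bigr| \le (\norm{a}+\norm{b})\,\norm{a-b}$ and bound the difference of residuals. For two triples $(\mu,t,\V)$ and $(\mu',t',\V')$, split
\begin{equation*}
r - r' = (I-P_{\V})\bigl(u_h^{\mu,\nu,t}-u_h^{\mu',\nu,t'}\bigr) - \bigl(P_{\V}-P_{\V'}\bigr)u_h^{\mu',\nu,t'} .
\end{equation*}
The first term is controlled by $(1+C_P)\,L\,\bigl(|\mu-\mu'|+|t-t'|\bigr)$, using the Lipschitz continuity of $\Gcal$. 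For the second term, write $\V\V^T-\V'\V'^T = (\V-\V')\V^T + \V'(\V-\V')^T$; this is bounded by $2\sqrt{N_hN'}\,|\V-\V'|$ in Frobenius norm because all entries lie in $[-1,1]$. Hence the second term is at most $C\,M\,|\V-\V'|$. Integrating over the compact set $\Xcal$, which has finite measure $|\Xcal|$, yields
\begin{equation*}
|J(\mu,t,\V)-J(\mu',t',\V')| \le 2(1+C_P)M\,|\Xcal|\,\bigl((1+C_P)L + CM\bigr)\bigl(|\mu-\mu'|+|t-t'|+|\V-\V'|\bigr).
\end{equation*}

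\textbf{Step 3: existence of $s$.} For fixed $(\mu,t)$, the map $\tilde{\V}\mapsto J(\mu,t,\A\tilde{\V})$ is continuous, being a composition of a Lipschitz map with a linear one, on the compact set of $\G$-orthonormal-producing matrices in $[-1,1]^{N_A\times N'}$. This constraint set $\{\tilde{\V}: \tilde{\V}^T\tilde{\V}=I\}$ is closed; it is nonempty provided the $\G$-orthonormal $\A$ has entries with modulus at most $1$, which is tacitly assumed in Definition \ref{def: DOD functional}. So a minimizer exists by Weierstrass. To obtain a Borel measurable selection $(\mu,t)\mapsto s(\mu,t)$, apply a measurable selection theorem for argmin correspondences of Carathéodory functions (the Kuratowski–Ryll-Nardzewski / measurable maximum theorem). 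Here $J$ is jointly continuous and the constraint set is a constant compact set, so the argmin correspondence is upper hemicontinuous with nonempty compact values and admits a Borel selector.

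\textbf{Main obstacle.} The delicate point is uniqueness, which Definition \ref{def: DOD functional} requires. Any minimizer can be multiplied by an orthogonal $N'\times N'$ matrix, and column signs can be flipped, without changing $J$. I would therefore read ``existence of $s$'' as existence of a Borel minimizing selection, with uniqueness understood modulo this orthogonal invariance, or equivalently uniqueness of the projector $\V\V^T$. The projector is genuinely unique when the $N'$-th and $(N'+1)$-th eigenvalues of the correlation operator $\int_{\Xcal} \A^T\G\uht(\uht)^T\G\A\,d\nu$ are separated, since the minimizer is given by its leading eigenvectors. I would state this gap condition explicitly if a single-valued $s$ is needed.
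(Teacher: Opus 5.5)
Your proposal is correct and follows essentially the paper's route. Both arguments use the same splitting $(\I-P)(u-u')+(P'-P)u'$, the bound $\bigl|\norm{a}^2-\norm{b}^2\bigr|\le(\norm{a}+\norm{b})\norm{a-b}$, the Lipschitz continuity of $\Gcal$ and the bound $M$, and then compactness together with the Kuratowski--Ryll-Nardzewski selection theorem to obtain $s$. Your compactness-based constant $C_P$ is in fact slightly more careful than the paper's proof, which uses $\norm{u-Pu}\le\norm{u}$ and $\opnorm{\V}=1$ and so tacitly restricts to $\G$-orthonormal $\V$ rather than the whole cube $[-1,1]^{N_h\times N'}$. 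Your uniqueness caveat is also well taken, since the paper's argument likewise yields only a measurable minimizing selection.
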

The auxiliary statement for the upcoming theorem is a refined version of \textcite[Theorem 1]{franco2024}.
It quantitates the proximity of an arbitrary DOD to the optimal choice. Both
proofs are included in the Appendix \ref{appendix: proofs for main matter}.
\begin{proposition}[DOD Bound] \label{prop: DOD Existence}
    For each geometric parameter $\mu \in \Theta$ and time $t \in [0, T]$, let $\Scal^\text{pre-red}_{\mu, t} :=
    \{ \A^T \G \uht \}_{\nu \in \Theta'}$ be the 
    $(\mu, t)$-invariant pre-reduced solution submanifold, where $\A \in \R^{N_h \times N_A}$ is a 
    $\G$-orthonormal matrix. Let $\epsilon > 0$ be arbitrary.
    Then, under the Assumptions \ref{assumption sample} and \ref{assumption parameter-to-solution map}, 
    there is a neural network architecture $\Phi_{\tilde{\V}}$ with
    neural network weights $\theta_\text{DOD}^* \in \Theta(\Phi_{\tilde{\V}})$ 
    for a DOD according to Definition \ref{def: DOD} and
    a constant $c_\A' > 0$ only depending on $\A, N_A$ and the parameter space, such that
    \begin{equation*}
        \int_{\Theta \times \Theta' \times [0, T]} 
        \norm{\uht - \V_{\mu, t} \V_{\mu, t}^T \G \uht}^2 d(\mu, \nu, t)
        \leq \epsilon  + c_\A' + \int_{\Theta \times [0, T]} d_{N'}(\mathcal{S}^\text{pre-red}_{\mu, t})^2 d(\mu, t).
    \end{equation*}
    Here, the DOD matrix is given by
    $\V_{\mu, t} := \V_{\mu, t}(\theta_\text{DOD}^*) \in \R^{N_h \times N'}$ for each $(\mu, t) \in \Theta \times [0, T]$.
\end{proposition}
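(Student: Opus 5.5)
We split the projection error into a pre-reduction part, a part controlled by the optimal DOD, and a network-fitting part, using the Pythagorean structure of $\G$-orthogonal projections. Set $P_\A := \A\A^T\G$; this is the $\G$-orthogonal projector onto $\operatorname{range}(\A)$. For any $\tilde{\V}$ with orthonormal columns, $\V = \A\tilde{\V}$ satisfies $\V^T\G\V = \tilde{\V}^T\tilde{\V} = I$, since $\A$ is $\G$-orthonormal. Moreover $\V\V^T\G = \A\tilde{\V}\tilde{\V}^T\A^T\G$ maps into $\operatorname{range}(\A)$. Hence
\begin{equation*}
\norm{\uht - \V\V^T\G\uht}^2 = \norm{(I-P_\A)\uht}^2 + \euknorm{\A^T\G\uht - \tilde{\V}\tilde{\V}^T\A^T\G\uht}^2 .
\end{equation*}
The second equality comes from Proposition \ref{prop: switch to euclidean norm}, because $\A$ is an isometry from the Euclidean $\R^{N_A}$ into $(\R^{N_h},\norm{\cdot})$. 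Integrating the first term over $\Theta\times\Theta'\times[0,T]$ gives a quantity depending only on $\A$, $N_A$ and the parameter domain; it is bounded by $|\Theta\times\Theta'\times[0,T]|\,M^2$ via Assumption \ref{assumption parameter-to-solution map}. We define this integral to be $c_\A'$.

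For the second term, we compare with the optimal DOD $s$ of Definition \ref{def: DOD functional} (taking $\Xcal=\Theta'$), whose existence is guaranteed by Lemma \ref{lemma: DOD functional is lipschitz}. For fixed $(\mu,t)$, $\tilde{\V}=s(\mu,t)$ minimizes the $\nu$-integral of the pre-reduced projection error over $N'$-dimensional orthonormal frames. That minimum is bounded by $|\Theta'|\sup_\nu$ of the best-subspace error, i.e. by $d_{N'}(\Scal^\text{pre-red}_{\mu,t})^2$ up to the measure of $\Theta'$, which we normalize to be $1$ or absorb. To compare the sup-based width with the optimal frame, we take any near-optimal subspace and orthonormalize it; this is admissible because orthonormal columns lie in $[-1,1]^{N_A\times N'}$. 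Integrating in $(\mu,t)$ then produces the $d_{N'}$ term.

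The remaining task is the approximation step: find $\theta^*_\text{DOD}$ such that $\tilde{\V}_{\mu,t}(\theta^*_\text{DOD})$ is close to $s(\mu,t)$ in an $L^2(\Theta\times[0,T])$ sense. Then we use that the map $\tilde{\V}\mapsto \euknorm{x-\tilde{\V}\tilde{\V}^Tx}^2$ is Lipschitz on $[-1,1]^{N_A\times N'}$ for $\euknorm{x}\le M$; this follows from the Lipschitz property of $J$ in Lemma \ref{lemma: DOD functional is lipschitz}. That converts closeness of frames into an additive $\epsilon$.

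Since $s$ is only Borel measurable and bounded, I would use Lusin's theorem together with the universal approximation theorem of \textcite{hornik1991nn} to approximate $s$ in $L^1$/$L^2$ by a ReLU network $s_\text{NN}$. I would realize $s_\text{NN}$ through the structure $P((R_1\odot s),\dots,(R_{N'}\odot s))$ followed by STACK, for instance with $s$ the identity-like first block and each $R_i$ producing column $i$.

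The main obstacle is the ORTH layer. QR is continuous only near full-rank matrices, and since $s(\mu,t)$ is already orthonormal, QR fixes it and is locally Lipschitz in a neighborhood of the orthonormal frames. I would therefore choose the network approximation uniformly close to $s$ outside a small exceptional set $E$ from Lusin, with $|E|<\delta$. On the complement the perturbed matrix stays full rank and QR moves it by $O(\delta)$. On $E$ the integrand is crudely bounded by $M^2$, since orthogonal projections are contractions, so its contribution is at most $M^2\delta$. Choosing $\delta$ small yields the total $\epsilon$.

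A further subtlety is that QR may flip column signs. This is harmless because only $\tilde{\V}\tilde{\V}^T$ enters. Summing the three contributions gives the claimed bound.
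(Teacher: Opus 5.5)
Your proposal is correct. It shares the paper's skeleton: the optimal DOD $s$ from Lemma~\ref{lemma: DOD functional is lipschitz}, a network approximation of $s$, the Lipschitz property of $J$, the Pythagorean split of Proposition~\ref{prop: switch to euclidean norm}, and bounding the $\nu$-integral by the supremum to produce $d_{N'}^2$. Where you differ is the approximation step.

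The paper approximates each entry of $s$ in $L^2$ via Hornik, parallelizes the networks, and post-composes with an explicit ReLU clipping network $\zeta$. This network is $1$-Lipschitz and entrywise clips to $[-1,1]$, which only guarantees that the output lies in the box where $J$ is defined. ORTH is never applied. Consequently:
\begin{itemize}
\item the constructed module is not literally of the form in Definition~\ref{def: DOD};
\item $\V_{\mu,t}\V_{\mu,t}^T\G$ is in general not a projector;
\item the Lemma's Lipschitz estimate, whose proof uses $\opnorm{\V}=1$, survives only as a dimension-dependent constant for a polynomial on a compact box.
\end{itemize}

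You instead keep ORTH and handle its discontinuity at rank-deficient inputs with a good-set/bad-set split. Off a small set you have uniform closeness to the orthonormal $s$, where the span projector is locally Lipschitz. On the exceptional set you use the crude bound $M^2$ (times $|\Theta'|$). This gives a genuine DOD with orthonormal frames, to which the Lemma applies verbatim, at the cost of a measure-theoretic argument. Applying the Pythagorean identity first also gives you the exact constant $c_\A'=\int\norm{\uht-\A\A^T\G\uht}^2\,d(\mu,\nu,t)$ instead of the paper's $\int\sup_\nu$ variant; both satisfy the statement.

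Two simplifications are available.
\begin{itemize}
\item Lusin is not needed. Hornik's $L^2$ approximation plus Markov's inequality already gives a set of measure at most $\norm{s_{\text{NN}}-s}_{L^2}^2/\eta^2$ off which the network is $\eta$-close to $s$.
\item Only the projector $W(W^TW)^{-1}W^T$ enters, and it is smooth on full-rank $W$. So QR sign conventions, or which span-preserving ORTH is used, never matter.
\end{itemize}

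Two points of bookkeeping. Keep the measure $\delta$ of the exceptional set and the uniform tolerance $\eta$ as separate parameters. Also note that the $M^2$ bound on the bad set requires ORTH to still return an orthonormal frame there (e.g.\ Householder QR), since no span-preserving orthonormalization exists at rank-deficient inputs.
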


\begin{remark} \label{remark: discrete optimal DOD}
    We have structured Proposition \ref{prop: DOD Existence} with the $L^1$-integral in mind, but we 
    may completely omit the integral with regard to $d(\mu, t)$, 
    by formally setting the 
    objective DOD functional for $\Pcal_2$ in regards to the statement and proof
    as
    \begin{equation*}
        \hat J(\mu, t, \V) := \frac{\euknorm{\Theta'}}{N_{s_2}} \sum_{\nu \in \Pcal_2} \norm{\uht - \V \V^T \G \uht}^2.
    \end{equation*}
    This culminates in the following version of Proposition \ref{prop: DOD Existence} for $\V_{\mu, t} :=
    \V_{\mu, t}(\theta^*_\text{DOD})$, when discarding the very last step in 
    equation (\ref{eq: last step for optimal DOD})
    \begin{equation*}
        \frac{\euknorm{\Theta'}}{N_{s_2}} \sum_{\nu \in \Pcal_2}
        \norm{\uht - \V_{\mu, t}
        \V_{\mu, t}^T \G \uht}^2 \\
        \leq \epsilon  + c_\A + \frac{\euknorm{\Theta'}}{N_{s_2}} 
        \min\limits_{\tilde{\V} \in \R^{N_A \times N'}} 
        \sum_{\nu \in \Pcal_2} 
        \euknorm{\uhtprered - \tilde{\V} \tilde{\V}^T \uhtprered}^2,
    \end{equation*}
    for any choice of $(\mu, t) \in \Theta \times [0, T]$.

    Note, that if $N_A = N_h$, we have $c_\A = 0$ by the definition
    of the POD as a canonical $\G$-orthonormalization. In this case, the right-hand-side sum
    would be over the non-reduced average of the KnW.
\end{remark}

We can now use this to assemble a decomposition in a comparable manner to the source
\cite{brivio2023}. This is the case, since we now have a similar statement
as the one of Proposition \ref{prop: eigenvalues and optimal projection under sampling}, in the form
of Remark \ref{remark: discrete optimal DOD}. In fact, define
\begin{equation} \label{eq: definition virtual eigenvalues}
    \sigma(\mu, t)_1^2 \geq \cdots \geq \sigma(\mu, t)_{n}^2 \quad \text{for any } n \leq r,
\end{equation}
as the eigenvalues of a virtual POD (see Definition \ref{def: POD}) of the discrete correlation matrix
\begin{equation*}
    K(\mu, t) := \frac{\euknorm{\Theta'}}{N_{s_2}} U_\text{pre-red}(\mu, t) U_\text{pre-red}(\mu, t)^T.
\end{equation*}
Here, the snapshot matrix $U_\text{pre-red}$ is given by
\begin{equation*}
    U_\text{pre-red}(\mu, t) := \left[u_\text{pre-red}^{\mu, \nu_1, t} \big\vert \cdots \big\vert u_\text{pre-red}^{\mu, \nu_{N_{s_2}}, t} \right]^T 
    \in \R^{N_A \times N_{s_{2}}},
\end{equation*}
for each fixed $(\mu, t) \in \Theta \times [0, T]$. 
By Proposition \ref{prop: eigenvalues and optimal projection under sampling}, it fulfills
\begin{equation*}
    \frac{\euknorm{\Theta'}}{N_{s_2}} 
        \min\limits_{\tilde{\V} \in \R^{N_A \times n}} 
        \sum_{\nu \in \Pcal_2} 
        \euknorm{\uhtprered - \tilde{\V} \tilde{\V}^T \uhtprered}^2 = \sum_{k > n} \sigma(\mu, t)^2_k,
\end{equation*}
for any $n \leq r$ as above.
In the same manner, we define a virtual infinite limit POD with the respective infinite correlation
matrix and their eigenvalues 
\begin{equation} \label{eq: definition virtual infinity eigenvalues}
    \sigma(\mu, t)_{1, \infty}^2 \geq \cdots \geq \sigma(\mu, t)_{n, \infty}^2 \quad \text{for any } n \leq r,
\end{equation}
that fulfill 
\begin{equation*} 
        \min\limits_{\tilde{V} \in \R^{N_A \times n}} 
        \int_{\nu \in \Theta'} 
        \euknorm{\uhtprered - \tilde{\V} \tilde{\V}^T \uhtprered}^2 d(\nu) = \sum_{k > n} \sigma(\mu, t)^2_{k, \infty}.
\end{equation*}

These virtual eigenvalues will help us to identify the error inherent to the DOD reduction. To be precise, this
will enable a distinction between network contribution and truly solution submanifold
dependent error.

\begin{theorem} \label{theo: relative error DOD+DNN}
    Define the preliminaries such as in the introduction to this Section \ref{section: error decomposition}. 
    Further, set $\V_{\mu, t} := \V_{\mu, t}(\theta^*_\text{DOD})$ for the choice of weights $\theta_\text{DOD}^* \in
    \Theta(\Phi_{\tilde{\V}})$ as in Proposition \ref{prop: DOD Existence}.
    Then, under the Assumptions \ref{assumption sample} and 
    \ref{assumption parameter-to-solution map}, we have for the relative error of the DOD+DNN
    
    \begin{equation}
        \mathcal{E}^\text{DOD+DNN}_R \leq \mathcal{E}_\text{S} + \mathcal{E}_\A 
        + \mathcal{E}_\text{DOD} + \mathcal{E}_\text{NN},
    \end{equation}
    where 
    
    \begin{enumerate}
        \item The sampling error 
        \[
            \mathcal{E}_\text{S} := \mathcal{E}_\text{S}\left(\mathcal{G}, 
            (\nu_i)_{i = 1}^{N_{s_2}}, N'\right)
        \]
        satisfies $\mathcal{E}_\text{S} \to 0$ uniformly a.s. in $\Theta \times [0, T]$ as $N_{s_2} \to \infty$, 
        with expectation 
        \[
            \E_\nu[\mathcal{E}_\text{S}] \leq \mathcal{O}(N_{s_2}^{-1/4}).
        \]
    
        \item The ambient error
        \[
            \mathcal{E}_\A := \Ecal_\A\left(\mathcal{G}, (\mu_i, \nu_i, t_i)_{i = 1}^{N_{\text{data}}},
            N_A\right)
        \]
        being equal to zero, if $N_A = N_h$.
        \item The DOD projection error 
        \[
            \mathcal{E}_{\text{DOD}} := \mathcal{E}_{\text{DOD}}\left(\mathcal{G}, 
            (\nu_i)_{i = 1}^{N_{s_2}}, N'\right)
        \]
        satisfies $\mathcal{E}_{\text{DOD}} \to \mathcal{E}_{\text{DOD}, \infty} + \varrho$
        uniformly a.s. in $\Theta \times [0, T]$ as $N_{s_2} \to \infty$
        for $\varrho = \varrho(\Phi_{\tilde{\V}})> 0$ arbitrarily small depending on
        the size and choice of weights of the inner DOD module, where
        $
            \mathcal{E}_{\text{DOD}, \infty} := \mathcal{E}_{\text{DOD}, \infty}(\mathcal{G}, N')
        $
        is independent of the data snapshots and of 
        order of the maximal linear KnW of the solution submanifold 
        $\mathcal{S}_{\mu, t} = \{ \uht \, \vert \, \nu \in \Theta' \}$ for any $(\mu, t) \in \Theta \times [0,T]$.
    
        \item The neural network approximation error
        \[
            \mathcal{E}_{\text{NN}} := \mathcal{E}_{\text{NN}}\left(\mathcal{G}, N', \Phi^\text{D}_{N'}\right)
        \]
    can be made arbitrarily small depending on the size and choice of weights of  $\Phi^\text{D}_{N'}$.
    \end{enumerate}

\end{theorem}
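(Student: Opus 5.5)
We will follow the strategy of \textcite[Theorem 1]{brivio2023}. We first lower-bound the denominator and then split the numerator with a triangle inequality into a projection part and a coefficient-fitting part.

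By Assumption \ref{assumption parameter-to-solution map}, $\norm{\uht}\geq m>0$ almost everywhere, so
\[
\Ecal_\text{R}^\text{DOD+DNN}\leq m^{-1}\Big(\int_{\Theta\times\Theta'\times[0,T]}\norm{\uht-\V_{\mu,t}\hat q_\text{D}(\mu,\nu,t)}^2\,d(\mu,\nu,t)\Big)^{1/2}.
\]
We insert the intermediate quantity $\V_{\mu,t}\V_{\mu,t}^T\G\uht$ and apply Minkowski's inequality in $L^2$. By $\G$-orthonormality of $\V_{\mu,t}=\A\tilde\V_{\mu,t}$ and Proposition \ref{prop: switch to euclidean norm}, the second piece becomes the Euclidean quantity
\[
\big(\textstyle\int\euknorm{\V_{\mu,t}^T\G\uht-\hat q_\text{D}(\mu,\nu,t)}^2\big)^{1/2}.
\]
The target map $(\mu,\nu,t)\mapsto\V_{\mu,t}^T\G\uht$ is continuous on a compact set, since $\Gcal$ is Lipschitz and the frozen DOD realization is continuous under ReLU. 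By the universal approximation theorem \cite{hornik1991nn}, some $\Phi^\text{D}_{N'}$ with suitable weights makes this term arbitrarily small. We define $\Ecal_\text{NN}$ as this term times $m^{-1}$.

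For the projection term we use the pre-reduction. Since $\V_{\mu,t}\V_{\mu,t}^T\G=\A\tilde\V\tilde\V^T\A^T\G$ and $\A\A^T\G$ is a $\G$-orthogonal projector, we get the Pythagorean splitting
\[
\norm{\uht-\V\V^T\G\uht}^2=\norm{\uht-\A\A^T\G\uht}^2+\euknorm{\uhtprered-\tilde\V\tilde\V^T\uhtprered}^2.
\]
The first summand, integrated and scaled by $m^{-1}$ after taking the square root, defines $\Ecal_\A$. It depends on the full data set through the POD matrix $\A$, and it vanishes when $N_A=N_h$.

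For the second summand we use Proposition \ref{prop: DOD Existence} in its discrete form, Remark \ref{remark: discrete optimal DOD}. For each $(\mu,t)$ it is bounded by $\epsilon$ plus $\sum_{k>N'}\sigma(\mu,t)_k^2$. We then write
\[
\sum_{k>N'}\sigma_k^2\leq\sum_{k>N'}\sigma_{k,\infty}^2+\Big|\sum_{k>N'}\sigma_k^2-\sum_{k>N'}\sigma_{k,\infty}^2\Big|
\]
and take square roots using $\sqrt{a+b}\leq\sqrt a+\sqrt b$. This produces three contributions:
\begin{itemize}
\item $\Ecal_\text{DOD}$, containing $\sqrt{\epsilon}=:\varrho$ and the term $\big(\int\sum_{k>N'}\sigma_{k,\infty}^2\big)^{1/2}$, which we call $\Ecal_{\text{DOD},\infty}$. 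By the eigenvalue characterization it equals the $L^2$-average of the optimal $N'$-dimensional projection error of $\Scal^\text{pre-red}_{\mu,t}$, so it is bounded by $\sup_{\mu,t}d_{N'}(\Scal_{\mu,t})$ up to $|\Theta'|^{1/2}$ and $m^{-1}$.
\item $\Ecal_\text{S}$, the square root of the integrated eigenvalue discrepancy.
\end{itemize}

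The main obstacle is the sampling term $\Ecal_\text{S}$. We would write $K(\mu,t)$ as a Monte Carlo estimator of $K_\infty(\mu,t)=\int_{\Theta'}\uhtprered(\uhtprered)^T\,d\nu$. For uniform a.s. convergence in $(\mu,t)$, the strong law of large numbers gives pointwise convergence. Lipschitz continuity of $\Gcal$, together with boundedness by $M$, makes the family $\{\nu\mapsto u u^T\}$ equicontinuous in $(\mu,t)$, so a compactness/Glivenko--Cantelli-type argument upgrades this to uniform convergence. The tail sums satisfy
\[
\sum_{k>N'}\sigma_k^2=\operatorname{tr}K-\sum_{k\leq N'}\sigma_k^2,
\]
and by Ky Fan's principle they are Lipschitz in $K$ with respect to the trace norm. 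Hence the discrepancy is bounded by $C\norm{K-K_\infty}$, and this bound also gives the convergence $\Ecal_\text{DOD}\to\Ecal_{\text{DOD},\infty}+\varrho$. For the expectation, the variance bound yields $\E\norm{K-K_\infty}_F\leq CM^2N_{s_2}^{-1/2}$, and Jensen's inequality applied to the square root gives $\E_\nu[\Ecal_\text{S}]\leq\mathcal{O}(N_{s_2}^{-1/4})$.

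One subtlety is that $\theta^*_\text{DOD}$ itself depends on the sample. We would handle this by working pointwise with the discrete optimal projector of Remark \ref{remark: discrete optimal DOD} and treating $\epsilon$ as uniform over the samples.
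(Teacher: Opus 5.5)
\textbf{The gap is in the step where you invoke Remark~\ref{remark: discrete optimal DOD}.} After your Pythagorean split you must control the \emph{population} error $\int_{\Theta'}\euknorm{\uhtprered-\tilde{\V}_{\mu,t}\tilde{\V}_{\mu,t}^T\uhtprered}^2\,d(\nu)$, because $\Ecal_\text{R}$ integrates over all of $\Theta'$. The Remark only bounds the \emph{empirical} average $\frac{\euknorm{\Theta'}}{N_{s_2}}\sum_{\nu\in\Pcal_2}(\cdots)$ by $\epsilon+\sum_{k>N'}\sigma(\mu,t)_k^2$, and nothing in your argument carries this over to the integral.

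That step is not automatic, because the projector is fitted to the samples. The empirical average is unbiased for every \emph{fixed} projector, so on average the empirical optimum underestimates the population error of the projector it selects. Concretely, if $N_{s_2}\le N'$, then $\sum_{k>N'}\sigma(\mu,t)_k^2=0$ and the discrete optimal projector just interpolates the samples, while its error over $\Theta'$ is generally positive. So your intermediate inequality fails.

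Your $\Ecal_\text{S}$ does not absorb this. The discrepancy $\big|\sum_{k>N'}\sigma(\mu,t)_k^2-\sum_{k>N'}\sigma(\mu,t)_{k,\infty}^2\big|$ compares the best empirical error with the best population error. It does not compare the population and empirical errors of the learned projector. That missing generalization gap is exactly the paper's $\Ecal_\text{S}$: it compares $\int_{\Theta'}\norm{\uht-\V_{\mu,t}\V_{\mu,t}^T\G\uht}^2d(\nu)$ with $\hat J(\mu,t,\A s)$ and sends the difference to zero via the uniform strong law of large numbers. Your eigenvalue discrepancy is instead the convergence $\sum_{k>N'}\sigma(\mu,t)_k^2\to\sum_{k>N'}\sigma(\mu,t)_{k,\infty}^2$, which the paper keeps inside $\Ecal_\text{DOD}$. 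Your closing remark on the sample dependence of $\theta^*_\text{DOD}$ points at this issue but does not resolve it.

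\textbf{Your own tools close the gap.} For any orthonormal $\tilde{\V}\in\R^{N_A\times N'}$, put $P:=\tilde{\V}\tilde{\V}^T$ and $K_\infty(\mu,t):=\int_{\Theta'}\uhtprered(\uhtprered)^T\,d(\nu)$. Then
\[
\int_{\Theta'}\euknorm{\uhtprered-P\uhtprered}^2d(\nu)-\frac{\euknorm{\Theta'}}{N_{s_2}}\sum_{\nu\in\Pcal_2}\euknorm{\uhtprered-P\uhtprered}^2=\operatorname{tr}\big((\I-P)(K_\infty(\mu,t)-K(\mu,t))\big),
\]
and its modulus is at most $\norm{K_\infty(\mu,t)-K(\mu,t)}_*$ \emph{uniformly over all projectors}, in particular the data-dependent one. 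The same duality shows that the tail $\min_P\operatorname{tr}((\I-P)K)=\sum_{k>N'}\sigma(\mu,t)_k^2$ is $1$-Lipschitz in the trace norm. So the enlarged $\Ecal_\text{S}$ costs at most $2\norm{K(\mu,t)-K_\infty(\mu,t)}_*$ per $(\mu,t)$. Your uniform law of large numbers and Jensen arguments then give the a.s.\ uniform convergence and the $\Ocal(N_{s_2}^{-1/4})$ rate.

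This route is actually sturdier than applying a uniform law to the integrand $\hat f(\nu;\mu,t)$ when $\V_{\mu,t}$ depends on the sample. With this repair your bookkeeping is a legitimate variant of the paper's. You put the data-independent tail directly into $\Ecal_\text{DOD}$, which then equals $\Ecal_{\text{DOD},\infty}+\varrho$ identically, and you move all sample dependence into $\Ecal_\text{S}$.
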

\begin{proof}
    Let 
    \begin{equation*}
        \norm{\cdot}_{L^2_\omega} := \left(\int_{\Theta \times \Theta' \times [0, T]} \frac{\norm{\cdot}^2}{\norm{\uht}^2} 
        d(\mu, \nu, t) \right)^{1/2},
    \end{equation*}
    define a map $\norm{\cdot}_{L^2_\omega}: L^2(\Theta \times \Theta' \times [0, T]; \R^{N_h}) \to \R^+$. Then this is
    a norm (see \cite[Appendix]{brivio2023}).
    Now define
    \begin{equation*}
        q_\text{D}(\mu, \nu, t) := \V_{\mu, t}^T \G \uht \in \R^{N'}, 
    \end{equation*}
    as the reduced solution regarding the DOD projection.
    Then we can apply the triangle inequality to achieve 
    \begin{equation*}
        \Ecal_\text{R}^\text{DOD+DNN} \leq \norm{\uht - \V_{\mu, t} \V_{\mu, t}^T \G \uht}_{L^2_\omega} + 
        \norm{\V_{\mu, t} q_\text{D}(\mu, \nu, t) - \V_{\mu, t} \hat{q}_\text{D}(\mu, \nu, t)}_{L^2_\omega}.
    \end{equation*}
    In turn, from here we can already infer the error stemming from the latent dynamics identification under the 
    neural network realization $\hat{q}_\text{D}$, i.e.
    \begin{equation}
        \mathcal{E}_\text{NN} := \left( \int_{\Theta \times \Theta' \times [0, T]} 
        \frac{\norm{\V_{\mu, t} q_\text{D}(\mu, \nu, t) - \V_{\mu, t} \hat{q}_\text{D}(\mu, \nu, t)}^2}{\norm{\uht}^2} 
        d(\mu, \nu, t) \right)^{1/2}.
    \end{equation}
    
    Moreover, we can extrapolate a similar procedure of splitting the remaining term into the respective errors
    as was done in \cite[Theorem 1]{brivio2023}.
    For this, we assume for the fixed $N' \in \N$, the quasi-optimal choice of weights $\theta_\text{DOD}^* \in
    \Theta(\Phi_{\tilde{\V}})$ for the underlying DOD neural network in accordance to Remark
    (ref{remark: discrete optimal DOD} for some fixed $\epsilon > 0$. We use the definition of
    $c_\A>0$ from that Remark and its subsequently defined virtual discrete 
    correlation eigenvalues $\sigma(\mu, t)_k$
    in equation (\ref{eq: definition virtual eigenvalues}) to achieve
    \begin{align*}
        &\norm{\uht - \V_{\mu, t} \V_{\mu, t}^T \G \uht}_{L^2_\omega} \\
        &\leq m^{-1} \left( \int_{\Theta \times [0, T]} \int_{\Theta'} 
        \norm{\uht - \V_{\mu, t} \V_{\mu, t}^T \G \uht}^2 d(\nu) d(\mu, t) \right)^{1/2} \\
        &= m^{-1} \left( \int \int
        \norm{\uht - \V_{\mu, t} \V_{\mu, t}^T \G \uht}^2 d(\nu) - \sum_{k > N'} \sigma(\mu, t)_k^2 +
        \sum_{k > N'} \sigma(\mu, t)_k^2 \, d(\mu, t) \right)^{1/2} \\
        &\leq \underbrace{m^{-1} \left\vert \, \int_{\Theta \times [0, T]} \int_{\Theta'} 
        \norm{\uht - \V_{\mu, t} \V_{\mu, t}^T 
        \G \uht}^2 d(\nu) -  \hat J(\mu, t, \A s) \,
        d(\mu, t) \, \right\vert^{1/2}}_{=: \Ecal_\text{S}} \\
        &\quad + \underbrace{m^{-1} \left( \int_{\Theta \times [0, T]} \norm{\uht - \A^T \A \G \uht}^2 d(\mu, t)
        \right)^{1/2}}_{=: \Ecal_\A}\\
        &\quad + \underbrace{m^{-1} \euknorm{\Theta \times [0, T]}^{1/2} \epsilon^{1/2} + m^{-1} \euknorm{\Theta \times [0, T]}^{1/2}
        \sup\limits_{(\mu, t) \in \Theta \times [0, T]} \sqrt{\sum_{k > N'} \sigma(\mu, t)_k^2}}_{=: \Ecal_\text{DOD}}.
    \end{align*}
    Note, that we used $\sqrt{a + b} \leq \sqrt{a} + \sqrt{b}$ for $a, b \geq 0$ for the split 
    and that $\hat J$ is given in Remark~\ref{remark: discrete optimal DOD}.
    Furthermore, we can easily see, that the supremum chosen in the last step is actually an exaggeration, since the mean
    \begin{equation*}
        \Ecal'_\text{DOD} := m^{-1} \left( \int_{\Theta \times [0, T]} \sqrt{\sum_{k > N'} \sigma(\mu, t)^2} 
        d(\mu, t) \right)^{1/2} + \frac{\epsilon^{1/2} \euknorm{\Theta \times [0, T]}^{1/2}}{m}
    \end{equation*} 
    is a more direct bound, also satisfying the upcoming convergence. 
    However, we state this in regard to the former definition to stay consistent
    with the later discussion surrounding Assumption \ref{assumption KnW decay}, that
    is formulated in $L^\infty$-norm.

    From here, we assert that $\Ecal_\text{S} \to 0$ almost surely for $N_{s_2} \to \infty$ by using
    the uniform strong law of large numbers \cite{newey1994}.
    In fact, we calculate
    \begin{equation*}
        \hat f(\nu; \mu, t) := \norm{\uht - \V_{\mu, t} \V_{\mu, t}^T \G \uht}^2 
        \leq M^2 \opnorm{\I_{N_h} - \V_{\mu, t}\V_{\mu,t}^T \G}^2
        \leq M^2 < \infty,
    \end{equation*}
    uniformly bounded and continuous 
    in $(\mu, t) \in \Theta \times [0, T]$ by Lemma \ref{lemma: DOD functional is lipschitz}.
    Assumption~\ref{assumption sample} gives compactness of $\Theta \times [0, T]$.
    Hence, we use the mentioned uniform law, to get a bound on the supremum, i.e.
    \begin{equation*}
        \sup\limits_{(\mu, t) \in \Theta \times [0, T]} \left\vert
            \int_{\Theta'} \hat f(\nu; \mu, t) d(\nu) - 
            \frac{\euknorm{\Theta'}}{N_{s_2}} \sum_{\nu \in \Pcal_2} \hat f(\nu; \mu, t)
        \right\vert \underset{\text{a.s.}}{\longrightarrow} 0.
    \end{equation*}
    Since this is an upper bound (with additional constant $\euknorm{\Theta \times \Theta' \times [0, T]}$) 
    for the $L^2$-integral, we get the wanted result.
    We also have $\E_\nu[\Ecal_\text{S}] \leq \Ocal(N_{s_2}^{-1/4})$ as a special case 
    of \cite[Proposition 1]{brivio2023}. Remark, that $\E_\nu$
    denotes the integral with regard to the samples in $\Theta'$.
    Moreover, we have from Proposition \ref{prop: existence and convergence to sigma infty}, that 
    for any $(\mu, t) \in \Theta \times [0, T]$
    \begin{equation*}
        \hat J(\mu, t, \A s(\mu, t)) = \sum_{k > N'} \sigma(\mu, t)_k^2 \underset{N_{s_2} \to \infty}{\longrightarrow} \sum_{k > N'}  
        \sigma(\mu, t)_{k, \infty}^2,
    \end{equation*}
    where $\hat J: \Theta \times [0, T] \times [-1, 1]^{N_A \times N'}$ is the objective DOD functional, 
    $s \in L^2(\Theta \times [0, T]; [-1, 1]^{N_A \times N'})$ the
    optimal DOD and $\A \in \R^{N_h \times N'}$ is the pre-reduction POD matrix from
    Remark \ref{remark: discrete optimal DOD}. This allows us to assert, that indeed, via
    an argument of restriction, 
    the map $(\mu, t) \mapsto \hat J(\mu, t, \A s(\mu, t))$ is continuous 
    and bounded by Lemma \ref{lemma: DOD functional is lipschitz}.
    Hence,
    \begin{equation*}
        (\mu, t) \mapsto \sqrt{\sum_{k>N'} \sigma(\mu, t)_k^2}
    \end{equation*}
    is continuous as well. This allows for the same use of the uniform strong law of large numbers
    \cite{newey1994}, resulting in
    \begin{multline*}
        \Ecal_\text{DOD} = \frac{\euknorm{\Theta \times [0, T]}^{1/2}}{m}
        \sup\limits_{(\mu, t) \in \Theta \times [0, T]} \sqrt{\sum_{k > N'} \sigma(\mu, t)_{k}^2}
        + \frac{\epsilon^{1/2} \euknorm{\Theta \times [0, T]}^{1/2}}{m}
        \\
        \underset{N_{s_2} \to \infty}{\longrightarrow} 
        \frac{\euknorm{\Theta \times [0, T]}^{1/2}}{m}
        \sup\limits_{(\mu, t) \in \Theta \times [0, T]} \sqrt{\sum_{k > N'} \sigma(\mu, t)_{k, \infty}^2}
        + \frac{\epsilon^{1/2} \euknorm{\Theta \times [0, T]}^{1/2}}{m} \\
        =: \Ecal_{\text{DOD}, \infty} + \varrho(\Phi_{\tilde{\V}}).
    \end{multline*}
    Here, the constant $\varrho(\Phi_{\tilde{\V}})$ can be arbitrarily improved for a more complex choice of the 
    inner DOD module architecture. This dependency
    on the complexity is suggested by Yarotsky Theorem \ref{theo: yarotsky}, since the $L^\infty$ error 
    correlates with the 
    $\Hcal_h$ representation error. This is due to the compactness of the time-parameter space, that is given
    by Assumption \ref{assumption sample}. But that correlation is only truly
    provable under additional assumptions since the necessary regularity of $s$ has not been shown.

    Finally, we know that by Remark~\ref{remark: discrete optimal DOD} $c_\A = 0$ implying 
    $\Ecal_\A = 0$, if $N_A = N_h$, which thus
    concludes the proof.
\end{proof}

As another important result, similar to \cite[Theorem 2]{brivio2023}, we realize a lower bound. This is due
to the inherent flaw of using a linear projector for specified $(\mu, t)$. Intuitively, the specific
proportionality must be related to the relation of the local magnitude differences of the solution. This observation
can be reformulated into the following theorem.

\begin{theorem} \label{theo: lower bound dod+dnn}
    Under the assumptions of Theorem \ref{theo: relative error DOD+DNN}, we have that 
    \begin{equation*}
        \Ecal_\text{R}^\text{DOD+DNN} \geq \frac{m}{M} \Ecal'_{\text{DOD}, \infty},
    \end{equation*}
    where we set
    \begin{equation*}
        \Ecal'_{\text{DOD}, \infty} := m^{-1}
    \int_{\Theta \times [0, T]} \sqrt{\sum_{k > N'} \sigma(\mu, t)_{k, \infty}^2} d(\mu, t),
    \end{equation*}
    and $\sigma(\mu, t)_{k, \infty}$ the eigenvalues for a non-reduced continuous correlation matrix.
\end{theorem}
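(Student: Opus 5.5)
The plan is to reproduce the argument of \cite[Theorem 2]{brivio2023}, replacing the single global POD subspace by the $(\mu,t)$-dependent DOD subspace. The key observation is that for every fixed $(\mu,t)$ the DOD+DNN output $\uhthat = \V_{\mu,t}\hat q_\text{D}(\mu,\nu,t)$ lies in the $N'$-dimensional space $\operatorname{range}(\V_{\mu,t})$, and this space does not depend on $\nu$. It therefore competes in the Kolmogorov-type minimization over $\nu \in \Theta'$ that defines $\sigma(\mu,t)_{k,\infty}$.

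First, I use the lower bound $\norm{\uht} \leq M$ from Assumption \ref{assumption parameter-to-solution map} to remove the weight:
\begin{equation*}
\Ecal_\text{R}^\text{DOD+DNN} \geq M^{-1} \Big( \int_{\Theta \times [0,T]} \int_{\Theta'} \norm{\uht - \uhthat}^2 \, d\nu \, d(\mu,t) \Big)^{1/2}.
\end{equation*}

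Second, I work pointwise in $(\mu,t)$. Since $\V_{\mu,t}\V_{\mu,t}^T\G$ is the $\G$-orthogonal projector onto $\operatorname{range}(\V_{\mu,t})$, best approximation gives
\begin{equation*}
\norm{\uht - \uhthat} \geq \norm{\uht - \V_{\mu,t}\V_{\mu,t}^T \G \uht}.
\end{equation*}
Next I minimize over all $\G$-orthonormal $\V \in \R^{N_h\times N'}$. By the continuous (infinite-sample) version of Proposition \ref{prop: eigenvalues and optimal projection under sampling}, taken without pre-reduction, the result is
\begin{equation*}
\int_{\Theta'} \norm{\uht - \V_{\mu,t}\V_{\mu,t}^T\G\uht}^2 d\nu \geq \sum_{k>N'} \sigma(\mu,t)_{k,\infty}^2 .
\end{equation*}
This is where the statement's phrase ``non-reduced continuous correlation matrix'' is used. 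Because there is no pre-reduction, no $c_\A$ or ambient term appears, and the bound holds for any weights, trained or not.

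Third, I convert the resulting $L^2$-in-$(\mu,t)$ quantity into the $L^1$-type integral of square roots appearing in $\Ecal'_{\text{DOD},\infty}$. Writing $g(\mu,t) := (\sum_{k>N'}\sigma(\mu,t)_{k,\infty}^2)^{1/2}$, Cauchy--Schwarz (equivalently Jensen) gives
\begin{equation*}
\int_{\Theta\times[0,T]} g \, d(\mu,t) \leq \euknorm{\Theta\times[0,T]}^{1/2} \Big(\int_{\Theta\times[0,T]} g^2 \, d(\mu,t)\Big)^{1/2}.
\end{equation*}
Combining the three steps yields
\begin{equation*}
\Ecal_\text{R}^\text{DOD+DNN} \geq M^{-1}\euknorm{\Theta\times[0,T]}^{-1/2} \, m \, \Ecal'_{\text{DOD},\infty},
\end{equation*}
which is the claimed bound $\frac{m}{M}\Ecal'_{\text{DOD},\infty}$ once the volume factor is absorbed. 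I would either normalize the parameter-time domain to unit measure, as is implicit in \cite{brivio2023}, or carry the factor $\euknorm{\Theta\times[0,T]}^{-1/2}$ explicitly.

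I expect the main obstacle to be bookkeeping rather than substance. The measurability of $(\mu,t)\mapsto g(\mu,t)$ is needed so that the integrals make sense; it follows from the continuity argument already given in the proof of Theorem \ref{theo: relative error DOD+DNN} via Lemma \ref{lemma: DOD functional is lipschitz}. The volume constant has to be handled consistently with the paper's convention. I would also check that the infinite-sample eigenvalue identity holds for the unreduced manifold $\Scal_{\mu,t}$, that is, the case $N_A = N_h$ with $\A$ a $\G$-orthonormalization.
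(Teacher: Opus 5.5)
Your proposal is correct and follows the paper's proof: best approximation by the $\G$-orthogonal projector $\V_{\mu,t}\V_{\mu,t}^T\G$, optimality of the $(\mu,t)$-wise continuous POD giving $\sum_{k>N'}\sigma(\mu,t)_{k,\infty}^2$, and $\norm{\uht}\le M$ to remove the weight. The one difference is your Cauchy--Schwarz step, which the paper skips by writing $(\Ecal'_{\text{DOD},\infty})^2 = m^{-2}\int_{\Theta\times[0,T]}\sum_{k>N'}\sigma(\mu,t)_{k,\infty}^2\,d(\mu,t)$; for the integral-of-square-roots definition this is only an inequality carrying a factor $\euknorm{\Theta\times[0,T]}$, so your explicit factor $\euknorm{\Theta\times[0,T]}^{-1/2}$ correctly shows that the constant $m/M$ as stated requires $\euknorm{\Theta\times[0,T]}\le 1$.
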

\begin{proof}
    Generally, we know the projector to be a suboptimal choice of projection under any neural network
    realization
    \begin{equation*}
        \frac{\norm{\uht - \V_{\mu, t} \hat{q}_\text{D}(\mu, \nu, t)}^2}{\norm{\uht}^2}
        \geq
        \frac{\norm{\uht - \V_{\mu, t} \V_{\mu, t}^T \G \uht}^2}{\norm{\uht}^2},
    \end{equation*}
    where $\V_{\mu, t} \in \R^{N_h \times N'}$ is the DOD matrix as given by Definition \ref{def: DOD}
    for any $(\mu, \nu, t) \in \Theta \times \Theta' \times [0, T]$. On the other hand,
    we start from the opposite direction of the inequality for a virtual $(\mu, t)$-dependent 
    POD matrix $\A^\infty_{\mu, t} \in \R^{N_h \times N'}$ for each
    $(\mu, t) \in \Theta \times [0, T]$. This is the POD obtained as a data limit of 
    the continuous correlation matrix
    \begin{equation*}
        U := \int_{\Theta'} \left(u_\text{pre-red}^{\mu, \nu, t}\right)^T u_\text{pre-red}^{\mu, \nu, t} d(\nu),
    \end{equation*}
    which exists and is well-defined via the Proposition 
    \ref{prop: eigenvalues and optimal projection under sampling}. This limit POD matrix $\A_\infty$ has eigenvalues
    $\sigma(\mu, t)^2_{1, \infty} \geq \cdots \geq \sigma(\mu, t)^2_{N', \infty}$. Hence, using the 
    definition of $\Ecal'_{\text{DOD}, \infty}$, described in the proof of Theorem \ref{theo: relative error DOD+DNN}, we can
    calculate
    \begin{align}
        (\Ecal'_{\text{DOD}, \infty})^2 &= m^{-2} 
        \int_{\Theta \times [0, T]} \sum_{k > N'} \sigma(\mu, t)_{k,\infty}^2 \, d(\mu, t) \\
        &= m^{-2}
        \int_{\Theta \times [0, T]} 
        \int_{\Theta'} \norm{\uht - \A^\infty_{\mu, t} (\A^\infty_{\mu, t})^T \G \uht}^2 d(\nu) d(\mu, t)
        \label{align: lower bound dod 1} \\
        &\leq m^{-2}
        \int_{\Theta \times [0, T]}
        \int_{\Theta'} \norm{\uht - \V_{\mu, t} \V_{\mu, t}^T \G \uht}^2 d(\nu) d(\mu, t)
        \label{align: lower bound dod 2} \\
        &= m^{-2} \int_{\Theta \times \Theta' \times [0, T]} \frac{\norm{\uht - \V_{\mu, t} \V_{\mu, t}^T \G \uht}^2}{\norm{\uht}^2}
        \norm{\uht}^2 d(\mu, \nu, t) \\
        &\leq \frac{M^2}{m^2} (\Ecal_\text{R})^2. \label{align: lower bound dod 3}
    \end{align}
    Here, we use Proposition~\ref{prop: eigenvalues and optimal projection under sampling} for the
    continuous POD matrix in Step (\ref{align: lower bound dod 1}), the optimality of the same proposition 
    for Step (\ref{align: lower bound dod 2}) and the first equation of this proof in 
    Step (\ref{align: lower bound dod 3}) with the use of the bound on $\norm{\uht}$ from Assumption
    \ref{assumption parameter-to-solution map}. 
\end{proof}

\subsection{Complexity Bounds} \label{section: upper complexity bounds}

In this section, we will specify under the Assumptions \ref{assumption sample}--\ref{assumption: dimension hierarchy} the upper complexity bounds for
our three main algorithms: the POD-DL-ROM, the DOD+DFNN and the DOD-DL-ROM, as introduced in
Section \ref{section: data-driven ROMs}. One further assumption is necessary for the result concerning the DOD-based
ROMs, and as such will be introduced after the statement regarding the POD-DL-ROM.

For the following discussion, we again set some preliminaries to shorten the theorem's layout.
We assume the setup of the previous Section \ref{section: error decomposition}, where we recall
defining a sample to be used for all subsequent statements. Furthermore, based on this sample,
set up $N$ and $N'$ as follows
\begin{equation} \label{eq: definition of N, N'}
  \begin{aligned}
    N = \underset{n \in \N}{\operatorname{arg \, min}} \left( \sum_{k > n} \sigma_k^2 \leq \frac{m^2}{9} \varepsilon^2 \right),\\ 
    N' = \underset{n \in \N}{\operatorname{arg \, min}} \left( \sup\limits_{(\mu, t) \in \Theta \times [0, T]} 
    \sum_{k > n} \sigma(\mu, t)_k^2 \leq \frac{m^2}{16 \euknorm{\Theta \times [0, T]}} 
    \varepsilon^2 \right),
  \end{aligned}
\end{equation}
with $\varepsilon > 0$ yet unspecified, and $\sigma_k^2$ according to 
Definition \ref{def: POD} and $\sigma(\mu, t)_k^2$ to
equation~(\ref{eq: definition virtual eigenvalues}). Note, that under Assumption~\ref{assumption KnW decay} the hierarchy
of Assumption~\ref{assumption: dimension hierarchy} is still preserved, if the choice of $\varepsilon$ is small
enough and $N_\text{data}$ is large enough.
Next, we simplify the definition of the approximate parameter-to-solution map notation 
\begin{equation*}
    \Gcal_\text{POD-DL-ROM} : \Theta \times \Theta' \times [0, T] \to \R^{N_h}; \quad (\mu, \nu, t)
    \mapsto \A_\text{P} \cdot \left( \psi_N \circ \phi_n \right) (\mu, \nu, t),
\end{equation*}
for the POD-DL-ROM
under weights $\theta_\text{POD-DL-ROM} = (\theta_\text{D}, \theta_\text{DF})$ with 
\begin{equation*}
    \quad \phi_n := R_\rho(\Phi_n(\theta_\text{DF})): \R^{p+q+1} \to \R^n,
    \psi_N := R_\rho(\Psi_N(\theta_\text{D})): \R^n \to \R^{N}.
\end{equation*}
Note, that in general $\A_\text{P} \neq \A$, since like before
$\A_\text{P} \in \R^{N_h \times N}$
and $\A \in \R^{N_h \times N_A}$. Similarly, we simplify the approximate parameter-to-solution map notation
\begin{equation*}
    \Gcal_\text{DOD+DFNN} : \Theta \times \Theta' \times [0, T] \to \R^{N_h}; \quad (\mu, \nu, t)
    \mapsto \V_{\mu, t} \cdot \phi_{N'}(\mu, \nu, t),
\end{equation*}
for the DOD+DFNN (cf.\!  Sec.\!  \ref{subsec: DOD+DFNN})
under weights $\theta_\text{DOD+DFNN} = (\theta_\text{DOD}, \theta_{\text{DF}_2})$ with
\begin{gather*}
    \V_{\mu, t} := \A \cdot R_\rho(\Phi_{\tilde{\V}}(\theta_\text{DOD}))(\mu, t) \in \R^{N_h \times N'},
    \qquad \text{for each } (\mu, t) \in \Theta \times [0, T] \text{, and } \\
    \phi_{N'} := R_\rho(\Phi_{N'}(\theta_{\text{DF}_2})): \R^{p+q+1} \to \R^{N'}.
\end{gather*} 
Furthermore, we can also simplify the approximate parameter-to-solution map
\begin{equation*}
    \Gcal_\text{DOD-DL-ROM} : \Theta \times \Theta' \times [0, T] \to \R^{N_h}; \quad (\mu, \nu, t)
    \mapsto \V_{\mu, t} \cdot (\psi_{N'} \circ \phi_n)(\mu, \nu, t),
\end{equation*}
of the DOD-DL-ROM (cf.\! Sec.\!  \ref{subsec: dod-dl-rom}) 
with weights $\theta_\text{DOD-DL-ROM} = (\theta_\text{DOD}, \theta_{\text{D}_2}, 
\theta_{\text{DF}_3})$ as
\begin{gather*}
    \V_{\mu, t} := \A \cdot R_\rho(\Phi_{\tilde{\V}}(\theta_\text{DOD}))(\mu, t) \in \R^{N_h \times N'},
    \qquad \text{for each } (\mu, t) \in \Theta \times [0, T] \text{, and } \\
    \psi_{N'} := R_\rho(\Psi_{N'}(\theta_{\text{D}_2})): \R^n \to \R^{N'}, \quad
    \phi_{n} := R_\rho(\Phi_n(\theta_{\text{DF}_3})): \R^{p+q+1} \to \R^n.
\end{gather*}
In analogy to the previous Section \ref{section: error decomposition}, we visualize this with
Figure \ref{fig:dimensional-workflow-pod-dlrom-dod-dlrom}. Note, that we omit the DOD+DFNN, since
it is build in structure exactly like the DOD+DNN.

\begin{figure}[tb]
    \centering
    \begin{adjustbox}{max width=\linewidth}
    \begin{tikzpicture}[
      >=Latex,
      node distance=10mm and 14mm,
      every path/.style={line width=.9pt},
      every node/.append style={fill opacity=0.5, text opacity=1},
      box/.style={draw, rounded corners=2pt, align=center, inner sep=5pt},
      head/.style={draw, rounded corners=2pt, align=center, inner sep=6pt, fill=black!5},
      small/.style={font=\footnotesize},
      tiny/.style={font=\scriptsize},
      lbl/.style={font=\scriptsize, inner sep=1pt},
      op/.style={draw, align=center, inner sep=4pt, rounded corners=2pt},
      dashedop/.style={op, dash pattern=on 2.1pt off 1.4pt},
      transform shape
    ]
    \node[head, minimum width=7.6cm] (PODHEAD) {POD-DL-ROM workflow};
    \node[head, minimum width=8.6cm, right=22mm of PODHEAD] (DODHEAD) {DOD-DL-ROM workflow};
    \node[box, below=8mm of PODHEAD] (INP1) {$(\mu,\nu,t)\in\Theta\times\Theta'\times[0,T]$};
    \node[box, below=8mm of DODHEAD] (INP2) {$(\mu,\nu,t)\in\Theta\times\Theta'\times[0,T]$};
  
    \node[op, below=10mm of INP1, fill=cividisDark] (DFNNP) {DFNN $\phi_{n}$\\\tiny $\R^{p+q+1}\!\to\!\R^{n}$};
    \node[op, above=9mm of QPTrue, fill=cividisDark] (ENCP) {Encoder $\Psi'_{n}$\\\tiny $\R^{N}\!\to\!\R^{n}$};
    \node[op, below=5mm of DFNNP, fill=cividisDark] (DECP) {Decoder $\Psi_{N}$\\\tiny $\R^{n}\!\to\!\R^{N}$};
    \node[box, below=10mm of DECP] (QP) {$\hat q_{\mathrm P}(\mu,\nu,t)\in\R^{N}$};
  
    \node[op, below=10mm of QP] (AP) {$\A_{\mathrm P}\in\R^{N_h\times N}$\\\tiny POD basis (see \ref{def: POD}) w.r.t.\ $\G$};
    \node[box, below=10mm of AP, fill=black!3] (UHP) {$\hat \uht=\A_{\mathrm P}\,\hat q_{\mathrm P}\in\R^{N_h}$};
  
    \node[box, left=8mm of QP, fill=cividisLight] (QPTrue) {$q_{\mathrm P}=\A_{\mathrm P}^{T}\G\,\uht\in\R^{N}$};
    \node[dashedop, below=8mm of QPTrue] (APTG) {$\A_{\mathrm P}^{T}\G\in\R^{N\times N_h}$};
    \node[box, below=8mm of APTG, fill=cividisLight] (UTrueP) {$\uht\in\R^{N_h}$};
  
    \draw[->] (INP1) -- (DFNNP);
    \draw[->] (DFNNP) -- (DECP);
    \draw[->] (DECP) -- (QP);
    \draw[->] (QP) -- (AP);
    \draw[->] (AP) -- (UHP);
  
    \draw[->] (UTrueP) -- (APTG);
    \draw[->] (APTG) -- (QPTrue);
    \draw[dash pattern=on 3pt off 1.5pt] (QPTrue.east) -- (QP.west);
  
    \draw[dash pattern=on 3pt off 1.5pt] (ENCP) -- (DFNNP);
    \draw[->] (QPTrue) -- (ENCP);
  
    \node[op, right=8mm of INP2, fill=cividisDark] (DODINNER) {$\tilde \V_{\mu,t}(\theta_{\mathrm{DOD}})\in\R^{N_A\times N'}$\\\tiny ORTH \& stack};
    \node[dashedop, right=8mm of DODINNER] (ATG) {$\A^{T}\G\in\R^{N_A\times N_h}$};
    \node[box, below=8mm of ATG, fill=cividisLight] (UTrueD2) {$\uht\in\R^{N_h}$};
  
    \node[op, below=6mm of DODINNER] (AUP) {$\A\in\R^{N_h\times N_A}$\\\tiny POD basis (see \ref{def: POD}) w.r.t.\ $\G$};
    \node[box, below=6mm of AUP] (VMT) {$\V_{\mu,t}=\A\,\tilde \V_{\mu,t}\in\R^{N_h\times N'}$};
  
    \node[op, below=10mm of INP2, fill=cividisDark] (DFNND) {DFNN $\phi_{n}$\\\tiny $\R^{p+q+1}\!\to\!\R^{n}$};
    \node[op, above=9mm of QDTrue, fill=cividisDark] (ENCD) {Encoder $\Psi'_{n}$\\\tiny $\R^{N'}\!\to\!\R^{n}$};
    \node[op, below=5mm of DFNND, fill=cividisDark] (DECD) {Decoder $\Psi_{N'}$\\\tiny $\R^{n}\!\to\!\R^{N'}$};
    \node[box, below=10mm of DECD] (QD) {$\hat q_{\mathrm D}(\mu,\nu,t)\in\R^{N'}$};
  
    \node[box, below=20mm of QD, fill=black!3] (UHD) {$\hat u_h(\mu,\nu,t)=\V_{\mu,t}\,\hat q_{\mathrm D}\in\R^{N_h}$};
  
    \node[box, left=8mm of QD, fill=cividisLight] (QDTrue) {$q_{\mathrm D}=\V_{\mu,t}^{T}\G\,\uht\in\R^{N'}$};
    \node[dashedop, below=8mm of QDTrue] (VTTG) {$\A^{T}\G\in\R^{N_A\times N_h}$};
    \node[box, below=8mm of VTTG, fill=cividisLight] (UTrueD) {$\uht\in\R^{N_h}$};
  
    \draw[->] (INP2) -- (DODINNER);
    \draw[->] (DODINNER) -- (AUP);
    \draw[->] (AUP) -- (VMT);
    \draw[->] (INP2) -- (DFNND);
    \draw[->] (DFNND) -- (DECD);
    \draw[->] (DECD) -- (QD);
    \draw[->] (VMT) -- (UHD);
    \draw[->] (QD) -- (UHD);
  
    \draw[->] (UTrueD) -- (VTTG);
    \draw[->] (VTTG) -- (QDTrue);
    \draw[dash pattern=on 3pt off 1.5pt] (QDTrue.east) -- (QD.west);
    \draw[dash pattern=on 3pt off 1.5pt] (ATG) -- (DODINNER);
    \draw[->] (UTrueD2) -- (ATG);
  
    \draw[dash pattern=on 3pt off 1.5pt] (ENCD) -- (DFNND);
    \draw[->] (QDTrue) -- (ENCD);

    \begin{pgfonlayer}{background}
      \node[draw, rounded corners=3pt, dotted, line width=.9pt, fill=black!4, inner sep=8pt,
            fit=(ENCP)(ENCD)(DFNNP)(DECP)(DFNND)(DECD)] (SWBOX) {};
    \end{pgfonlayer}
    \node[tiny, fill=white, inner sep=1pt] at ([yshift=1.6mm]SWBOX.north) {DL-ROM};
  
    \end{tikzpicture}
    \end{adjustbox}
    \caption{POD-DL-ROM (left) and DOD-DL-ROM (right) with autoencoder training. Solid arrows show inference; 
    dashed lines denote training.}
    \label{fig:dimensional-workflow-pod-dlrom-dod-dlrom}
  \end{figure}
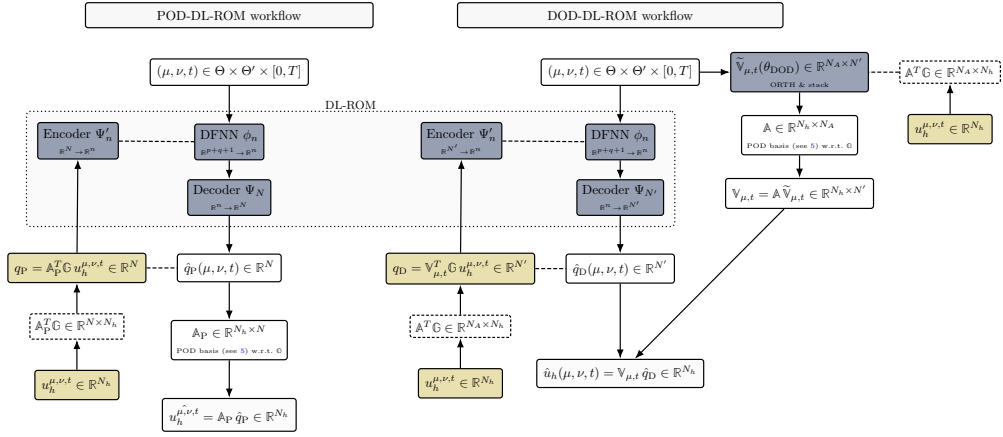

Furthermore, let $\chi_* \in C^s(\R^n; \R^{N'})$ and $\chi_*' \in C^{s'}(\R^{N'}; \R^n)$ respectively be the maps attaining
the perfect embedding Assumption \ref{assumption perfect embedding}, with $s \gg s' \geq 2$. Since $N' < N$ in
general, the perfect embedding is implied for $N$ as well; let $\psi_* \in C^s(\R^n; \R^N)$ 
and $\psi_*' \in C^{s'}(\R^N; \R^n)$ be the maps attaining the equivalent for $N$.

Finally, set constants
\begin{equation*}
    C_1' := \sup\limits_{\vert \alpha \vert \leq s'} \sup\limits_{v \in \R^{N'}} \euknorm{D^\alpha \chi_*(v)}, 
    \quad 
    C_2' := \sup\limits_{\vert \alpha \vert \leq s} \sup\limits_{w \in \R^n} \euknorm{D^\alpha \chi_*'(w)},
\end{equation*} 
and $C_1, C_2$ analogously for $\psi_*$ and $\psi_*'$. 

Under these preliminaries for the POD-DL-ROM, the authors have shown \cite[Theorem 3]{brivio2023}. This
theorem shows an upper bound for the complexity of all neural networks in the model under a given 
Probably-Almost-Correct (PAC) bound
with a precision level $0 < \varepsilon < 1$ and a probability $0 < \delta < 1$. These complexity bounds are
given by
\begin{enumerate}
    \item[] $L_{\Psi_N} = c \log(\varepsilon^{-1})$ layers and
    $\omega_{\Psi_N} = cN \varepsilon^{-n/(s-1)} \log(\varepsilon^{-1})$ active weights,
\end{enumerate}
for the Decoder, and 
\begin{enumerate}
    \item[] $L_{\Phi_n} = c \log(\varepsilon^{-1})$ layers and
    $\omega_{\Phi_n} = cn \varepsilon^{-(p+q+1)} \log(\varepsilon^{-1})$ active weights,
\end{enumerate}
for the Deep Feedforward Neural Network.

In order to achieve a similar result for the DOD-based data-driven ROMs, we need one more crucial assumption
for the regularity of the optimal DOD map $s$. This assumption has not been discussed yet and
its properties could pose an interesting question for further research. However, it is necessary in order to
make use of Yarotsky Theorem \ref{theo: yarotsky} for a bound on the complexity.

\begin{assumption} \label{assumption additional lipschitz}
    Let the objective DOD functional $J: \Theta \times [0, T] \times [-1, 1]^{N_h \times N'}$ be given
    for $\Xcal = \Pcal_2$ as in Definition \ref{def: DOD functional}, and the corresponding
    optimal DOD
    \begin{equation*}
        s: \Theta \times [0, T] \mapsto [0, 1]^{N_A \times N'}; \quad (\mu, t) \mapsto 
        \underset{{\tilde{\V} \in [-1, 1]^{N_A \times N'}}}{\text{argmin}}  
        J(\mu, t, \A \tilde{\V})
    \end{equation*}
    be $\hat{L}$-Lipschitz.
\end{assumption}

With this additional assumption, we now formulate our main complexity result.

\begin{theorem} \label{theo: PAC bound for DOD-DL-ROM}
    Let the preliminaries of the DOD-DL-ROM be given as in the introduction of this Section 
    \ref{section: upper complexity bounds}, $N_A = N_h$ explicitly and the Assumption 
    \ref{assumption additional lipschitz} be true.
    Then there is $N_{s_2} = N_{s_2}(\delta, \varepsilon)$, 
    a constant $c = c(\Theta, \Theta', T, L, \hat{L}, C_1, C_2, p, q, n, s, s')$,
    an architecture and a choice of neural network weights for that architecture, such that
    the DOD-DL-ROM approximate parameter-to solution map $(\mu, \nu, t) \mapsto \V_{\mu, t} \cdot 
    \hat{q}_\text{D}(\mu, \nu, t) :=
    \V_{\mu, t} \cdot (\psi_{N'} \circ \phi_n)(\mu, \nu, t)$
    attains
    \begin{equation*}
        \prob_\nu\left[\Ecal^{DOD-DL-ROM}_R < \varepsilon\right] > 1 - \delta.
    \end{equation*}
    Further, the complexity of the underlying architecture is at most
    \begin{enumerate}
        \item[] $L_{\Phi_{\tilde{\V}}} = c \log(\varepsilon^{-1})$ layers and
        $\omega_{\Phi_{\tilde{\V}}} = cN_A N' \varepsilon^{-(p+1)} \log(\varepsilon^{-1})$ active weights,
    \end{enumerate}
    for the inner DOD module,
    \begin{enumerate}
        \item[] $L_{\Psi_{N'}} = c \log(\varepsilon^{-1})$ layers and
        $\omega_{\Psi_{N'}} = cN' \varepsilon^{-n/(s-1)} \log(\varepsilon^{-1})$ active weights,
    \end{enumerate}
    for the Decoder, and
    \begin{enumerate}
        \item[] $L_{\Phi_n} = c \log(\varepsilon^{-1})$ layers and
        $\omega_{\Phi_n} = cn \varepsilon^{-(p+q+1)} \log(\varepsilon^{-1})$ active weights,
    \end{enumerate}
    for the Deep Feedforward Neural Network.
\end{theorem}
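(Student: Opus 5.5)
The plan follows the proof of \cite[Theorem 3]{brivio2023}, with the POD projection replaced by the DOD decomposition of Theorem \ref{theo: relative error DOD+DNN}. Since $N_A = N_h$, Remark \ref{remark: discrete optimal DOD} gives $c_\A = 0$, so $\Ecal_\A = 0$. The relative error therefore splits into four terms,
\begin{equation*}
\Ecal_R^\text{DOD-DL-ROM} \leq \Ecal_\text{S} + \Ecal_\text{DOD}^{\text{fit}} + \Ecal_\text{DOD}^{\text{KnW}} + \Ecal_\text{NN},
\end{equation*}
and the goal is to make each at most $\varepsilon/4$. The choice of $N'$ in (\ref{eq: definition of N, N'}) already guarantees
\begin{equation*}
m^{-1}\euknorm{\Theta\times[0,T]}^{1/2}\sup_{(\mu,t)}\Big(\sum_{k>N'}\sigma(\mu,t)_k^2\Big)^{1/2}\leq \varepsilon/4,
\end{equation*}
so the KnW part is controlled by $N'$ and no network size enters it.

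\textbf{Sampling term.} We use $\E_\nu[\Ecal_\text{S}] \leq \Ocal(N_{s_2}^{-1/4})$ together with Markov's inequality:
\begin{equation*}
\prob_\nu[\Ecal_\text{S} \geq \varepsilon/4] \leq 4\varepsilon^{-1}\, C N_{s_2}^{-1/4},
\end{equation*}
which is below $\delta$ once $N_{s_2} \gtrsim (\varepsilon\delta)^{-4}$. This fixes $N_{s_2}(\delta,\varepsilon)$. All remaining estimates are deterministic, so the probability statement comes only from this step.

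\textbf{Inner DOD module.} Assumption \ref{assumption additional lipschitz} makes each of the $N_A N'$ entries of $s$ an $\hat L$-Lipschitz function on the compact $(p+1)$-dimensional set $\Theta\times[0,T]$. Yarotsky's theorem (Theorem \ref{theo: yarotsky}) with smoothness $1$ gives, for each entry, a ReLU network with $c\log(\varepsilon^{-1})$ layers and $c\,\varepsilon^{-(p+1)}\log(\varepsilon^{-1})$ weights reaching uniform accuracy $c\varepsilon$. Parallelizing these $N_A N'$ networks yields the claimed bound on $\omega_{\Phi_{\tilde\V}}$.

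I expect this to be the main obstacle. The realized $\tilde\V_{\mu,t}$ is $\text{ORTH}$ applied to the network output, so a uniform bound on the entries has to be turned into a bound on the projector error. I would argue as follows. Since $s(\mu,t)$ is orthonormal, QR is locally Lipschitz near orthonormal matrices, with a constant depending only on $N'$. Hence
\begin{equation*}
\opnorm{\tilde\V\tilde\V^T - s s^T} \lesssim \varepsilon .
\end{equation*}
With $\norm{\uht}\leq M$ this bounds the projection defect. The resulting $\epsilon$ in Proposition \ref{prop: DOD Existence} and Remark \ref{remark: discrete optimal DOD} is then $\lesssim \varepsilon^2$, and after scaling constants the fit term $\Ecal_\text{DOD}^{\text{fit}}$ is at most $\varepsilon/4$. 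If the Lipschitz constant of QR depends on $N_A$, I would absorb that dependence into $c$.

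\textbf{Latent part.} Because $\V_{\mu,t}$ is $\G$-orthonormal, $\Ecal_\text{NN} \leq m^{-1}\norm{q_\text{D} - \psi_{N'}\circ\phi_n}_{L^2}$. By Assumption \ref{assumption perfect embedding}, $q_\text{D} = \chi_*\circ(\chi_*'\circ q_\text{D})$. Split by the triangle inequality:
\begin{equation*}
\euknorm{q_\text{D} - \psi_{N'}\circ\phi_n} \leq \euknorm{\chi_*(\chi_*'\circ q_\text{D}) - \chi_*(\phi_n)} + \euknorm{\chi_*(\phi_n) - \psi_{N'}(\phi_n)} .
\end{equation*}
For the second summand, approximate $\chi_*\in C^s$ on a compact set by Yarotsky's theorem. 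This gives a decoder with $cN'\varepsilon^{-n/(s-1)}\log(\varepsilon^{-1})$ weights; the exponent $s-1$ is chosen so that the $W^{1,\infty}$ error is controlled.

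For the first summand, use the Lipschitz bound of $\chi_*$ (constant from $C_1'$). It then suffices to approximate $\chi_*'\circ q_\text{D}$ by $\phi_n$. This map is Lipschitz in $(\mu,\nu,t)$ because $\Gcal$ is $L$-Lipschitz, $\V_{\mu,t}$ is Lipschitz through the approximated $s$, and $\chi_*'$ is $C^{s'}$. A smoothness-$1$ Yarotsky approximation on the $(p+q+1)$-dimensional domain gives $cn\,\varepsilon^{-(p+q+1)}\log(\varepsilon^{-1})$ weights.

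Adding the four contributions of size $\varepsilon/4$ completes the argument. All constants depend only on $\Theta,\Theta',T,L,\hat L,C_1,C_2,p,q,n,s,s'$.
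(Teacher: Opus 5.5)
This is correct at the paper's own level of rigor, and it is essentially the paper's argument. Both use the split from Theorem~\ref{theo: relative error DOD+DNN} with $\Ecal_\A=0$, control the KnW part of $\Ecal_\text{DOD}$ purely through the choice of $N'$ in (\ref{eq: definition of N, N'}), apply Yarotsky entrywise to the $\hat L$-Lipschitz optimal DOD and parallelize over the $N_AN'$ entries, and bound $\Ecal_\text{NN}$ as in \cite[Theorem 3]{brivio2023}, all within the same $\varepsilon/4+\varepsilon/2+\varepsilon/4$ budget. The only differences are local: you obtain $N_{s_2}(\delta,\varepsilon)$ explicitly from the $\Ocal(N_{s_2}^{-1/4})$ expectation bound and Markov's inequality rather than from the uniform law of large numbers, and you treat the ORTH step explicitly through local Lipschitz continuity of QR, which also gives the exact $\G$-orthonormality your reduction of $\Ecal_\text{NN}$ relies on; the paper instead composes with a clipping network and leaves orthonormalization implicit.
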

\begin{proof}
    Similar as to the proof of \cite[Theorem 3]{brivio2023}, we will try to bound each quantity
    of the statement in Theorem \ref{theo: relative error DOD+DNN}, but with the distinction, that $\Ecal_{\text{DOD}}$ is
    stochastic in nature. For this, we will employ Remark \ref{remark: discrete optimal DOD}. 
    Here, we remark, that taking $N_A = N_h$ implies $\Ecal_\A = 0$.
    
    The sampling error can be bounded in the same way as before, i.e. we can directly bound 
    $\Ecal_S = \Ecal_S(N_{s_2})$ independently of $N'$, under the Assumption \ref{assumption sample}
    by the uniform law of large numbers \cite{newey1994}. Indeed, for all $1 > \delta > 0$ and $\varepsilon > 0$ there is $N_{s_2}$, such that
    \begin{equation} \label{eq: S error dod-dl-rom}
        \prob_\nu[\Ecal_S < \varepsilon / 4] > 1 - \delta.
    \end{equation}
    Now recall the definition of $\Ecal_{\text{DOD}}$ in the proof of Theorem \ref{theo: relative error DOD+DNN} and 
    further setting the $\epsilon > 0$ from the upstream Remark \ref{remark: discrete optimal DOD}, realizing
    an "optimal" discrete approximation for a fixed data size $N_{s_2}$, as
    \begin{equation*}
        \epsilon = \frac{\varepsilon m}{4 \euknorm{\Theta \times [0, T]}^{1/2}}.
    \end{equation*}
    Hence, concluding, under the additional use of definition of $N'$ in equation
    (\ref{eq: definition of N, N'}), in the following approximation for the DOD error
    \begin{equation} \label{eq: dod error dod-dl-rom}
        \begin{aligned}
            \Ecal_{\text{DOD}} &= \frac{\euknorm{\Theta \times [0, T]}^{1/2}}{m} \left( \epsilon + 
            \sup\limits_{(\mu, t) \in \Theta \times [0, T]} \sqrt{\sum_{k > N'} \sigma(\mu, t)_k^2} \right) \\
            &< \frac{\varepsilon}{4} + \left( \frac{\euknorm{\Theta \times [0, T]}^{1/2}}{m}
            \sup\limits_{(\mu, t) \in \Theta \times [0, T]} \sqrt{\sum_{k > N'} \sigma(\mu, t)_k^2}
            \right) \\
            &\leq \frac{\varepsilon}{4} + \left( \frac{\euknorm{\Theta \times [0, T]}^{1/2}}{m} \cdot 
            \frac{\varepsilon m}{4 \euknorm{\Theta \times [0, T]}^{1/2}} \right)
            = \frac{\varepsilon}{2}.
        \end{aligned}
    \end{equation}

    For the following,
    we assume w.l.o.g. a normalized 
    feature space $\Theta \times [0, T] \subset (0, 1)^{p+1}$.
    Under the Assumption \ref{assumption additional lipschitz},
    we get
    \begin{align*}
        \norm{s}_{W^{1, \infty}((0,1)^{N_A \times N'})} &= \norm{s}_{L^\infty((0,1)^{N_A \times N'})} + 
        \norm{D s}_{L^\infty((0,1)^{N_A \times N'})} 
        \leq 1 + L =: B.
    \end{align*}
    This means, we have $[s]_{ij} \in \Fcal_{1, p+1, \infty, B}$ for each $(i, j) \in \{1, \dots, N_A\} \times \{1, \dots, N'\}$.
    Therefore, there is
    a neural network architecture $[\tilde{\V}_0]_{ij}$
    with corresponding weights $\theta^*_{ij}$, such that
    \begin{equation*}
        \norm{[s]_{ij} - R_\rho([\tilde{\V}_0]_{ij}(\theta^*_{ij}))}_{L^\infty(\Theta \times [0, T]; \R)} \leq \epsilon,
    \end{equation*}
    for each $(i, j) \in \{1, \dots, N_A\} \times \{1, \dots, N'\}$. This architecture has at most 
    \begin{enumerate}
        \item[] $L_{[\tilde{\V}_0]_{ij}} = c'' \log(\varepsilon^{-1})$ layers and
        $\omega_{[\tilde{\V}_0]_{ij}} = c'' \varepsilon^{-(p+1)} \log(\varepsilon^{-1})$ active weights,
    \end{enumerate}
    for some $c'' = c''(\Theta, T, p, \hat{L}) > 0$ by Yarotsky Theorem \ref{theo: yarotsky}. 
    The use of this theorem is due to 
    the $\hat{L}$-Lipschitz property of the optimal DOD, justifying $W^{1, \infty}$-regularity with bound $\hat{L}$.

    From this point, 
    using the steps in the proof of Proposition \ref{prop: DOD Existence} under the specifications of Remark
    \ref{remark: discrete optimal DOD}, we can find a neural network architecture $\Phi_{\tilde{\V}}$ for
    the inner DOD module and corresponding optimal weight $\theta^*_\text{DOD} \in \Theta(\Phi_{\tilde{\V}})$,
    such that
    \begin{equation*}
        \norm{s - R_\rho(\Phi_{\tilde{\V}}(\theta^*_\text{DOD}))}_{L^\infty(\Theta \times [0, T]; \R^{N_h \times N'})} 
        \leq \epsilon,
    \end{equation*}
    with at most
    $L_{\Phi_{\tilde{\V}}} = c' \log(\varepsilon^{-1})$ layers and
        $\omega_{\Phi_{\tilde{\V}}} = c' N_A N' \varepsilon^{-(p+1)} \log(\varepsilon^{-1})$ active weights,
    where $c' = 3 + c''$.

    Moreover, we can use the exact same arguments as in the bound for $\Ecal_{\text{NN}}$ for the POD-DL-ROM as in the
    proof of \cite[Theorem 3]{brivio2023}, which yields
    \begin{equation} \label{eq: nn error dod-dl-rom}
        \Ecal_{\text{NN}} < \frac{\varepsilon}{4}.
    \end{equation}
    For sake of uniformity, we then take $c > 0$ to be the maximal of each individual constant of all three
    separate neural networks.
    Finally, putting equations (\ref{eq: S error dod-dl-rom}), (\ref{eq: dod error dod-dl-rom}) and (\ref{eq: nn error dod-dl-rom})
    together, we get 
    \begin{equation*}
        \Ecal_R \leq \Ecal_S + \Ecal_{\text{DOD}} + \Ecal_{\text{NN}} < \frac{\varepsilon}{4} +  \frac{\varepsilon}{2}
        + \frac{\varepsilon}{4} = \varepsilon,
    \end{equation*}
    with probability greater than $1 - \delta$ concluding our proof.
\end{proof}

In a very similar vein, we obtain the following theorem for the DOD+DFNN 
(see Appendix \ref{appendix: proofs for main matter} for a proof). 

\begin{theorem} \label{theo: PAC bound for DOD+DFNN}
    Let the preliminaries of the DOD+DFNN be given as in the introduction of this Section 
    \ref{section: upper complexity bounds}, $N_A = N_h$ explicitly and the Assumption 
    \ref{assumption additional lipschitz} be true.
    Then there is $N_{s_2} = N_{s_2}(\delta, \varepsilon)$, 
    a constant $c = c(\Theta, \Theta', T, L, \hat{L}, p, q, N')$,
    an architecture and a choice of neural network weights for that architecture, such that
    the DOD+DFNN approximate parameter-to solution map $(\mu, \nu, t) \mapsto 
    \V_{\mu, t} \cdot \hat{q}_\text{D}(\mu, \nu, t) :=
    \V_{\mu, t} \cdot \phi_{N'}(\mu, \nu, t)$ 
    attains
    \begin{equation*}
        \prob_\nu\left[\Ecal^{DOD+DFNN}_R < \varepsilon\right] > 1 - \delta.
    \end{equation*}
    Further, the complexity of the underlying architecture is at most
    \begin{enumerate}
        \item[] $L_{\Phi_{\tilde{\V}}} = c \log(\varepsilon^{-1})$ layers and
        $\omega_{\Phi_{\tilde{\V}}} = cN_A N' \varepsilon^{-(p+1)} \log(\varepsilon^{-1})$ active weights,
    \end{enumerate}
    for the inner DOD module, and
    \begin{enumerate}
        \item[] $L_{\Phi_{N'}} = c \log(\varepsilon^{-1})$ layers and
        $\omega_{\Phi_{N'}} = c N' \varepsilon^{-(p+q+1)} \log(\varepsilon^{-1})$ active weights,
    \end{enumerate}
    for the Deep Feedforward Neural Network.
\end{theorem}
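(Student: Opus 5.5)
The argument follows the proof of Theorem~\ref{theo: PAC bound for DOD-DL-ROM} almost verbatim. The only change is in the latent module: the composition $\psi_{N'} \circ \phi_n$ is replaced by a single DFNN $\phi_{N'}$ that approximates the reduced coordinates $q_\text{D}(\mu,\nu,t) = \V_{\mu,t}^T \G \uht$ directly. We start from the error decomposition of Theorem~\ref{theo: relative error DOD+DNN}, $\Ecal_R^\text{DOD+DFNN} \leq \Ecal_S + \Ecal_\A + \Ecal_\text{DOD} + \Ecal_\text{NN}$, and aim to make each term small with the same budget $\varepsilon/4 + 0 + \varepsilon/2 + \varepsilon/4$.

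\emph{Projection terms.} Since $N_A = N_h$, Remark~\ref{remark: discrete optimal DOD} gives $c_\A = 0$, and hence $\Ecal_\A = 0$. For the sampling error, the uniform strong law of large numbers \cite{newey1994} yields an $N_{s_2}(\delta,\varepsilon)$ with $\prob_\nu[\Ecal_S < \varepsilon/4] > 1-\delta$; this is exactly as in (\ref{eq: S error dod-dl-rom}). For the DOD term we take $\epsilon = \varepsilon m / (4\euknorm{\Theta\times[0,T]}^{1/2})$ and use the definition of $N'$ in (\ref{eq: definition of N, N'}); the computation (\ref{eq: dod error dod-dl-rom}) then transfers unchanged and gives $\Ecal_\text{DOD} < \varepsilon/2$. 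We realize the inner DOD module as before. By Assumption~\ref{assumption additional lipschitz}, every entry $[s]_{ij}$ lies in $W^{1,\infty}$ with norm bounded by $1+\hat L$. Yarotsky Theorem~\ref{theo: yarotsky} then gives networks with $c''\log(\varepsilon^{-1})$ layers and $c''\varepsilon^{-(p+1)}\log(\varepsilon^{-1})$ weights, each with uniform error at most $\epsilon$. Parallelizing the $N_A N'$ entries and applying STACK/ORTH gives the stated complexity $cN_AN'\varepsilon^{-(p+1)}\log(\varepsilon^{-1})$.

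\emph{Network term.} We need $\Ecal_\text{NN} < \varepsilon/4$. Because $\V_{\mu,t}$ is $\G$-orthonormal, $\norm{\V_{\mu,t}(q_\text{D}-\hat q_\text{D})} = \euknorm{q_\text{D} - \hat q_\text{D}}$ by Proposition~\ref{prop: switch to euclidean norm}. Combined with $\norm{\uht} \geq m$, this gives
$\Ecal_\text{NN} \leq m^{-1}\euknorm{\Theta\times\Theta'\times[0,T]}^{1/2}\, \sup \euknorm{q_\text{D}-\phi_{N'}}$.
It therefore suffices to approximate each of the $N'$ components of $q_\text{D}$ uniformly to accuracy $\varepsilon m/(4\sqrt{N'}\,\euknorm{\Theta\times\Theta'\times[0,T]}^{1/2})$. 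After normalizing the input domain to $(0,1)^{p+q+1}$, we check that every component of $q_\text{D}$ is Lipschitz with a controlled $W^{1,\infty}$ norm. The bound on the values is $\euknorm{q_\text{D}} \leq \norm{\uht} \leq M$. For the Lipschitz constant, $q_\text{D}$ is the product of $\V_{\mu,t}^T$ and $\G\uht$: the second factor is $L$-Lipschitz by Assumption~\ref{assumption parameter-to-solution map}, and the first is $\hat L$-Lipschitz (up to the factor $\A$) by Assumption~\ref{assumption additional lipschitz}, or else we use the network realization, which is Lipschitz in $(\mu,t)$. This gives a constant of order $L + M\hat L$. Yarotsky Theorem~\ref{theo: yarotsky} with $d = p+q+1$ and smoothness $1$ then provides, for each component, $c\log(\varepsilon^{-1})$ layers and $c\varepsilon^{-(p+q+1)}\log(\varepsilon^{-1})$ weights. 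The dependence on $N'$ coming from the rescaled tolerance is absorbed into $c$, which is allowed since $c$ depends on $N'$. Parallelizing the $N'$ components gives $cN'\varepsilon^{-(p+q+1)}\log(\varepsilon^{-1})$ weights and does not increase the depth.

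\emph{Main obstacle.} The delicate point is the regularity of $q_\text{D}$. It depends on the \emph{trained} DOD $\V_{\mu,t}$, not on the exact optimal DOD $s$, and a ReLU realization followed by ORTH is not automatically Lipschitz with a constant independent of $\varepsilon$. I would first try to avoid this by applying Yarotsky to the target $s(\mu,t)^T\A^T\G\uht$, which is Lipschitz with constant depending only on $L,\hat L,M$. The discrepancy with $\V_{\mu,t}^T\G\uht$ is then at most $M\epsilon$ in the operator norm, which is absorbed into the $\varepsilon/4$ budget by shrinking the tolerance by a constant factor. Finally, we take $c$ to be the maximum of the constants of the two networks, and the summed bound gives $\Ecal_R < \varepsilon$ with probability greater than $1-\delta$.
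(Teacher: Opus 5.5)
Your skeleton matches the paper's proof. You use the same split from Theorem~\ref{theo: relative error DOD+DNN} with budget $\varepsilon/4+0+\varepsilon/2+\varepsilon/4$, carry over $\Ecal_\text{S}$, $\Ecal_\A$ and $\Ecal_\text{DOD}$ from Theorem~\ref{theo: PAC bound for DOD-DL-ROM}, and apply Yarotsky entrywise with smoothness one in dimension $p+q+1$ for $\phi_{N'}$. The real difference is how you justify the regularity of the latent target.

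The paper applies Yarotsky directly to $\Phi_*(\mu,\nu,t)=\V_{\mu,t}^T\G\uht$. It claims this map is $L$-Lipschitz via $\euknorm{\Phi_*(\mu,\nu,t)-\Phi_*(\mu',\nu',t')}=\euknorm{\V_{\mu,t}^T\G(\uht-u_h^{\mu',\nu',t'})}$. That equality silently replaces $\V_{\mu',t'}$ by $\V_{\mu,t}$, so it only controls variation in $\nu$ and ignores the $(\mu,t)$-dependence of the trained basis. This is exactly the obstacle you point out.

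Your detour makes this step valid. The map $s(\mu,t)^T\A^T\G\uht$ is Lipschitz with constant of order $L+M\hat L$, because $s$ is $\hat L$-Lipschitz with orthonormal values and $\euknorm{\A^T\G v}\leq\norm{v}$. A triangle inequality then costs $M\sup_{(\mu,t)}\euknorm{\tilde\V_{\mu,t}-s(\mu,t)}_\text{op}$. The price is a constant depending on $\hat L$ and $M$, and the inner module's accuracy must now also fit the $\Ecal_\text{NN}$ budget. You also correctly track the $\sqrt{N'}$ factor from componentwise approximation, which the paper leaves implicit.

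Drop the first version in your network paragraph, where $\V_{\mu,t}$ is called $\hat L$-Lipschitz by Assumption~\ref{assumption additional lipschitz}. That assumption concerns $s$, not the ReLU-plus-ORTH realization, so only the $s$-based argument holds.

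One bookkeeping point remains, and the paper shares it. Yarotsky gives accuracy $\epsilon$ per entry of $s$, which only yields $\euknorm{\tilde\V_{\mu,t}-s(\mu,t)}_\text{op}\lesssim\sqrt{N_AN'}\,\epsilon$. ORTH adds a perturbation of the same order, since QR is locally Lipschitz near a matrix with orthonormal columns. So your switching cost is $\sqrt{N_AN'}\,M\epsilon$, not $M\epsilon$. Absorbing it needs a per-entry tolerance of order $\varepsilon/\sqrt{N_AN'}$, which puts an $N_A$-dependence into the inner-module complexity that the stated $c$ does not cover. The paper omits the same factor when it passes from entrywise approximation of $s$ to a matrix-norm bound, and hence to $\Ecal_\text{DOD}$. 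This is therefore a looseness in the paper's constants, not a defect specific to your route.
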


\subsection{Comparison of POD and DOD based approaches}

In this section, we will compare the presented data-driven ROM techniques with each other
in the context of the Theorems \cite[Theorem 3]{brivio2023},
\ref{theo: PAC bound for DOD-DL-ROM} and \ref{theo: PAC bound for DOD+DFNN}, or in other
words; we will examine the differences of the upper complexity bounds for the POD-based 
neural network approach and both DOD-based neural network approaches.

Within this framework, we will play around with potentially sharper assumptions, especially
hypothetically improving regularity of the parameter-to-solution map $\Gcal$ in Assumption 
\ref{assumption parameter-to-solution map} and the regularity of the optimal DOD $s$ in
Assumption \ref{assumption additional lipschitz}. 
To that end, additionally to the framework of the Section \ref{section: upper complexity bounds}, let
$\Gcal \in W^{r, \infty}(\Theta \times \Theta' \times [0, T]; \R^{N_h})$ for some $r \geq 1$,
    $s \in W^{m, \infty}(\Theta \times [0, T]; \R^{N_h \times N'})$ for some $m \geq 1$.

These two regularities namely determine 
the effectivity of the neural network under Yartosky (and/or Gühring) Theorem \ref{theo: yarotsky}.
To have a substantial difference in the use of the DOD, as opposed to the POD, we focus
the reader's attention on Assumption \ref{assumption KnW decay}, which ensures the following, as a result of Proposition 
\ref{prop: existence and convergence to sigma infty} and the definitions of the continuous correlation
matrices in Definition \ref{def: POD}. 

\begin{enumerate}
    \item The eigenvalues of the POD matrix given
    under the continuous correlation matrix of the entire solution manifold $\Scal$
    exhibit a slow decay, i.e. for $C > 0$ and $0 < \alpha < 1$
    \begin{equation*}
         \sqrt{\sum_{k > n} \sigma_{k, \infty}^2} \leq C n^{-\alpha}.
    \end{equation*}
    \item The eigenvalues of the $(\mu, t)$-fixed virtual POD matrix given
    under the continuous non-reduced correlation matrix of the $(\mu, t)$-dependent solution submanifold
    $\Scal_{\mu, t} = \{ \uht \, \vert \, \nu \in \Theta' \}$ exhibit a quick decay, i.e. for $C' > 0$ and $\beta \geq 1$.
    \begin{equation*}
        \sup_{(\mu, t) \in \Theta \times [0, T]} \sqrt{\sum_{k > n} \sigma(\mu, t)_{k, \infty}^2}
        \leq C' n^{-\beta}.
    \end{equation*}
\end{enumerate}

With these two bounds, we can presumably infer the order of the mode dimensions $N$ and $N'$ for very large 
data $N_s, N_t \to \infty$ and $N_A = N_h$. Namely,
\begin{equation} \label{eq: order of N, N'}
    N = \Ocal\left(\varepsilon^{-\frac{1}{\alpha}}\right), \qquad N' = \Ocal\left(\varepsilon^{-\frac{1}{\beta}}\right),
\end{equation}
for the upper complexity bounds of the POD-DL-ROM, the DOD+DFNN and the DOD-DL-ROM 
of fixed precision $\varepsilon > 0$ as is dictated in the beginning of Section
\ref{section: upper complexity bounds}. In fact, we know that by
Proposition \ref{prop: existence and convergence to sigma infty} $\sum_{k > n} \sigma_k^2 \to \sum_{k > n}
\sigma_{k, \infty}^2$ for $N_s, N_t \to \infty$, and therefore
\begin{equation*}
    N = \underset{n \in \N}{\operatorname{arg \, min}} \left\{\sum_{k > n} \sigma_k^2 \geq
    \frac{m^2}{9} \varepsilon^2
    \right\} \underset{N_s, N_t \to \infty}{\longrightarrow} 
    \underset{n \in \N}{\operatorname{arg \, min}} \left\{ C n^{-2\alpha} \geq
        \frac{m^2}{9} \varepsilon^2 
    \right\} = \Ocal(\varepsilon^{-\frac{1}{\alpha}}),
\end{equation*}
and a similar argument for $N'$. These rough estimates for the order of $N$ and $N'$ then let us 
get an idea for the upcoming comparisons. It will mainly involve the weighing of the higher complexity
cost of the DOD-based ROMs against the lighter, but potentially less reductive, POD-based methods.
Note, that the assumption of $N_A = N_h$ could be relaxed, if the pre-reduced $(\mu, t)$-dependent solution
manifold could be guaranteed to have fast KnW decay.

\begin{proposition} \label{prop: regularity discussion for POD vs DOD}
    Let a precision $0 < \varepsilon < 1$ and $0 < \delta < 1$ be given uniformly
    for the Theorems \cite[Theorem 3]{brivio2023}, \ref{theo: PAC bound for DOD-DL-ROM} and 
    \ref{theo: PAC bound for DOD+DFNN} under the Assumptions 
    \ref{assumption sample}--\ref{assumption additional lipschitz}. This is equivalent to having
    \begin{align*}
        \prob[\Ecal_\text{R}^\text{POD-DL-ROM} < \varepsilon] &> 1 - \delta, \\
        \prob_\nu[\Ecal_\text{R}^\text{DOD-DL-ROM} < \varepsilon] &> 1 - \delta, \\
        \prob_\nu[\Ecal_\text{R}^\text{DOD+DFNN} < \varepsilon] &> 1 - \delta.
    \end{align*}
    Further, let the constants $N_h, n, s \in \N$ from the respective assumptions satisfy
    $
        \frac{n}{s-1} > \frac{\log(N_h)}{\log(\varepsilon^{-1})}.
    $
    Additionally, assume that the optimal DOD map $s \in W^{m, \infty}(\Theta \times [0,T])$ with $m \in \N$,
    such that
    $
        m \gtrsim \frac{p+1}{\frac{n}{s-1} - \log_\varepsilon\left(\frac{N - N'}{N_h N'}\right)}
    $
    is satisfied. Then it is guaranteed that either
    \begin{equation*}
        \omega_\text{DOD+DFNN} \lesssim \omega_\text{DOD-DL-ROM} \lesssim \omega_\text{POD-DL-ROM}, 
        \qquad \text{if } r \geq \frac{1}{n}(s-1)(p+q+1),
    \end{equation*}
    or only
    \begin{equation*}
        \omega_\text{DOD-DL-ROM} \lesssim \omega_\text{POD-DL-ROM},
        \qquad \text{if } r \leq \frac{1}{n}(s-1)(p+q+1),
    \end{equation*}
    depending on $r \in \N$ setting the regularity 
    of the parameter-to-solution map $\Gcal \in W^{r, \infty}(\Theta \times \Theta' \times [0, T])$.
    If we additionally set up Assumption \ref{assumption KnW decay} with the constants $0 < \alpha < 1$
    and $\beta \geq 1$, and let $N_s, N_t \to \infty$, we may infer a sharper minimal order of $m$, with
    \begin{equation*}
        m \gtrsim \frac{p+1}{\frac{n}{s-1} - \frac{\log(N_h)}{\log(\varepsilon^{-1})} + 
        \frac{\beta - \alpha}{\beta \alpha}}.
    \end{equation*}
\end{proposition}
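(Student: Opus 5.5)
The plan is to compare total active weight counts term by term, using the complexity bounds of \cite[Theorem 3]{brivio2023}, Theorem \ref{theo: PAC bound for DOD-DL-ROM} and Theorem \ref{theo: PAC bound for DOD+DFNN}. Under the higher regularities $\Gcal\in W^{r,\infty}$ and $s\in W^{m,\infty}$, Yarotsky's Theorem \ref{theo: yarotsky} (Gühring's version for higher smoothness) replaces the exponents $\varepsilon^{-(p+q+1)}$ and $\varepsilon^{-(p+1)}$ by $\varepsilon^{-(p+q+1)/r}$ and $\varepsilon^{-(p+1)/m}$. The first step is to restate the three totals, up to common factors $c\log(\varepsilon^{-1})$ that cancel in $\lesssim$:
\begin{align*}
\omega_\text{POD-DL-ROM} &\simeq N\varepsilon^{-n/(s-1)} + n\varepsilon^{-(p+q+1)/r},\\
\omega_\text{DOD-DL-ROM} &\simeq N_hN'\varepsilon^{-(p+1)/m} + N'\varepsilon^{-n/(s-1)} + n\varepsilon^{-(p+q+1)/r},\\
\omega_\text{DOD+DFNN} &\simeq N_hN'\varepsilon^{-(p+1)/m} + N'\varepsilon^{-(p+q+1)/r}.
\end{align*}
Here I use $N_A=N_h$. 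The DFNN term $n\varepsilon^{-(p+q+1)/r}$ is common to both DL-ROMs, so it drops out of their comparison.

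For $\omega_\text{DOD-DL-ROM}\lesssim\omega_\text{POD-DL-ROM}$, it suffices to show
\[
N_hN'\varepsilon^{-(p+1)/m}\lesssim (N-N')\varepsilon^{-n/(s-1)}.
\]
Taking $\log_\varepsilon$, which reverses inequalities since $\varepsilon<1$, this is equivalent to
\[
\frac{p+1}{m}\le \frac{n}{s-1}-\log_\varepsilon\Big(\frac{N-N'}{N_hN'}\Big),
\]
which is exactly the assumed lower bound on $m$. The hypothesis $\frac{n}{s-1}>\frac{\log N_h}{\log\varepsilon^{-1}}$ ensures the right-hand side is positive, because $(N-N')/(N_hN')\le 1$. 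Hence a finite $m$ exists and the condition is not vacuous.

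For the DOD+DFNN ordering, the inner DOD term is shared with the DOD-DL-ROM. It therefore remains to compare $N'\varepsilon^{-(p+q+1)/r}$ with $N'\varepsilon^{-n/(s-1)}+n\varepsilon^{-(p+q+1)/r}$. If $r\ge (s-1)(p+q+1)/n$, then $\varepsilon^{-(p+q+1)/r}\le\varepsilon^{-n/(s-1)}$, so the first term is dominated by the decoder term and $\omega_\text{DOD+DFNN}\lesssim\omega_\text{DOD-DL-ROM}$ follows. In the opposite case only the DL-ROM comparison survives, which gives the dichotomy in the statement.

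For the sharper bound under Assumption \ref{assumption KnW decay} with $N_s,N_t\to\infty$, I insert (\ref{eq: order of N, N'}). Since $N\sim\varepsilon^{-1/\alpha}\gg N'\sim\varepsilon^{-1/\beta}$, we have $N-N'\simeq N$. Then
\[
\log_\varepsilon\frac{N-N'}{N_hN'}\approx \frac{\log N_h}{\log\varepsilon^{-1}}-\frac1\alpha+\frac1\beta = \frac{\log N_h}{\log\varepsilon^{-1}}-\frac{\beta-\alpha}{\alpha\beta}.
\]
Substituting this into the condition on $m$ yields the stated threshold.

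I expect the main obstacle to be the bookkeeping in the first step rather than any algebra. I must make sure the Yarotsky/Gühring rates with smoothness $m$ and $r$ genuinely replace those of the Lipschitz-based theorems, with $\varepsilon$-independent constants. I must also keep the $\log(\varepsilon^{-1})$ factors and the constants $c$ uniform across the three networks so that $\lesssim$ is meaningful. I would handle this by taking $c$ as the maximum of the individual constants, as in the proof of Theorem \ref{theo: PAC bound for DOD-DL-ROM}.
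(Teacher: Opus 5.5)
Your overall route is the paper's. You write the three weight counts, cancel the shared DFNN term, and reduce $\omega_\text{DOD-DL-ROM}\lesssim\omega_\text{POD-DL-ROM}$ to $N_hN'\varepsilon^{-(p+1)/m}\lesssim(N-N')\varepsilon^{-n/(s-1)}$. You then apply $\log_\varepsilon$ and plug in $N\sim\varepsilon^{-1/\alpha}$, $N'\sim\varepsilon^{-1/\beta}$. Your DOD+DFNN comparison via $\varepsilon^{-(p+q+1)/r}\le\varepsilon^{-n/(s-1)}$ spells out what the paper only asserts.

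\textbf{The positivity step fails as written.} You need $D:=\frac{n}{s-1}-\log_\varepsilon\frac{N-N'}{N_hN'}>0$ to pass between $m\gtrsim(p+1)/D$ and $(p+1)/m\lesssim D$. If $D\le 0$, the former holds for every $m$ and the latter for none. The fact $\frac{N-N'}{N_hN'}\le 1$ gives $\log_\varepsilon\frac{N-N'}{N_hN'}\ge 0$, which \emph{lowers} $D$, so it cannot bound $D$ from below. Indeed,
\[
D=\Bigl(\frac{n}{s-1}-\frac{\log N_h}{\log\varepsilon^{-1}}\Bigr)+\frac{\log\bigl((N-N')/N'\bigr)}{\log\varepsilon^{-1}}.
\]
The hypothesis controls only the bracket, and the last term is nonnegative iff $N\ge 2N'$. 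For $N=N'+1$ that term equals $-\log N'/\log\varepsilon^{-1}$. Then any $\frac{n}{s-1}$ strictly between $\frac{\log N_h}{\log\varepsilon^{-1}}$ and $\frac{\log(N_hN')}{\log\varepsilon^{-1}}$ satisfies the hypothesis but gives $D<0$.

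So you must add $N\ge 2N'$, or assume $D>0$ outright. In the asymptotic part, where $N/N'\sim\varepsilon^{-(\beta-\alpha)/(\alpha\beta)}\to\infty$, this holds for small $\varepsilon$. That is also why the sharpened denominator is positive, by the hypothesis and $\beta>\alpha$. The paper's proof only asserts positivity ``through the assumption on $N_h,n,s$'', so the gap is shared, but the reason you give points the wrong way.

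\textbf{Two further steps are taken on faith, by you and the paper alike.}

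First, taking $c$ as a maximum makes the constants uniform, but it does not deliver the $r$-rate for the DOD-based DFNNs. Their targets are built from $(\mu,\nu,t)\mapsto\V_{\mu,t}^T\G\uht$, which contains the ReLU-realized DOD. That map is only piecewise smooth (Lipschitz) in $(\mu,t)$, so in general it is not in $W^{r,\infty}$ for $r\ge 2$. The appendix proof of Theorem \ref{theo: PAC bound for DOD+DFNN} indeed uses only $\Phi_*\in\Fcal_{1,p+q+1,\infty,1+L}$. Cancelling the DFNN terms at rate $\varepsilon^{-(p+q+1)/r}$ would require building the target from the optimal DOD $s$ (regularity at most $\min(m,r)$) and absorbing the perturbation.

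Second, consider your ``$\approx$'' in the last step. For the sharpened condition on $m$ to imply the original one, you need $\frac{N-N'}{N_hN'}\gtrsim\frac{1}{N_h}\varepsilon^{-(\beta-\alpha)/(\alpha\beta)}$, i.e.\ $N\gtrsim\varepsilon^{-1/\alpha}$. Assumption \ref{assumption KnW decay} bounds the global width only from above, so (\ref{eq: order of N, N'}) yields only $N\lesssim\varepsilon^{-1/\alpha}$. A lower bound on the decay of the global width is therefore needed. The paper's $\lesssim$ at that point runs in the same, insufficient direction.
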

\begin{proof}
    For the following, we start with the wanted statement and simply rearrange the targeted inequality
    until it poses a lower bound for $m$. Note beforehand, that in the case $r \geq (s-1)(p+q+1)/n$,
    we know the number of maximal weights for the DOD-DL-ROM to be larger than for the DOD+DFNN, excusing
    the distinction.
    We can now set up
    \begin{equation*}
        \omega_\text{DOD-DL-ROM} \lesssim \omega_\text{POD-DL-ROM}
    \end{equation*}
    as a starting point. This itself implies, since we can subtract the latent dynamics contributions
    without changing the order, that
    \begin{equation*}
        N_h N' \varepsilon^{-(p+1)/m} + 
        N' \varepsilon^{-n/(s-1)} \lesssim N \varepsilon^{-n/(s-1)}.
    \end{equation*}
    From here, we subtract the decoder contribution from the left to fit it to the right, then divide by 
    the positive term $\varepsilon^{-n/(s-1)}$ and set the constants on one side, to attain
    \begin{equation*}
        \varepsilon^{-\left(\frac{(p+1)}{m} - \frac{n}{(s-1)}\right)} \lesssim \frac{N - N'}{N_h N'}.
    \end{equation*}
    By Assumption \ref{assumption: dimension hierarchy},
    we have that $0 < (N-N')/(N_hN') \leq (N + N')/(N_hN') < 1$. 
    Then taking the logarithm with regard to $\varepsilon < 1$ as base, hence being 
    a monotonely decreasing function, therefore 
    flipping the inequality, 
    and subsequently rearranging and multiplying by negative one, 
    leaves us with
    \begin{equation*}
        \frac{p+1}{m} \lesssim \frac{n}{s-1} - \log_\varepsilon\left( \frac{N - N'}{N_hN'} \right).
    \end{equation*}
    With the note, that $\log_\varepsilon(\cdot) = -\log(\cdot)/\log(\varepsilon^{-1})$ 
    we can conclude, through the assumption on $N_h, n$ and $s$ from
    the statement of the proposition that both sides are strictly positive,
    hence resulting in the wanted bound after inverting and multiplying by $(p+1)$. 

    Let us now assume $N_s, N_t \to \infty$ and the Assumption \ref{assumption KnW decay} in order
    to prove the extension of the proposition. Immediately, one can recognize from the beginning of
    the section, that $N = \Ocal(\varepsilon^{-1/\alpha})$ and $N' = \Ocal(\varepsilon^{-1/\beta})$.
    We now deduce, that
    \begin{equation*}
        \frac{N - N'}{N_h N'} = \frac{1}{N_h} \left( \frac{N}{N'} - 1 \right) = \Ocal\left( 
            \frac{1}{N_h} \left( \varepsilon^{-\frac{1}{\alpha} + \frac{1}{\beta}} - 1 \right) \right)
            \lesssim \frac{1}{N_h} \varepsilon^{\frac{\alpha - \beta}{\alpha \beta}},
    \end{equation*}
    thus implying by monotonicity of the logarithm
    \begin{equation*}
        \log_\varepsilon\left(\frac{N-N'}{N_h N'}\right) = 
        -\frac{\log\left(\frac{N-N'}{N_h N'}\right)}{\log(\varepsilon^{-1})}
        \gtrsim -\frac{\log(1/N_h)}{\log(\varepsilon^{-1})} + \frac{\alpha - \beta}{\alpha \beta}
        = \frac{\log(N_h)}{\log(\varepsilon^{-1})} - \frac{\beta - \alpha}{\beta \alpha},
    \end{equation*}
    which concludes our proof. Note, that we rewrite the logarithms in such a way, that the signs make sense
    in the context of the constants.
\end{proof}

\section{Numerical Results} \label{sec:experiments}

In this section, we present numerical experiments surrounding an industry-relevant problem
to support the theoretical findings. The problem in question is of a medium, subject to
in- and outflow through a catalytic filter. Its pressure and velocity distribution is modelled using a
parametric Darcy flow equation and a subsequent time-dependent advection-reaction equation.

{\bf Darcy flow problem.}
We model the catalytic filter as a porous material, whose location inside the reaction chamber is parametric
(see Figure \ref{fig: darcy model}).
The strong formulation of the Darcy flow problem is thus given by
\begin{equation} \label{eq: darcy problem}
    \begin{cases}
    \vec{b}^\mu = -k^\mu \nabla p, & \text{in } \Omega=(0,1)^2,\\
    \nabla \cdot \vec{b}^\mu  = 0, & \text{in } \Omega,\\[1mm]
    \end{cases}
\end{equation}
with boundary conditions $p = 0  \text{ on } \Gamma_{\mathrm{out}}$, $\vec{b}^\mu \cdot \vec{n} = \vec{j}^\mu \cdot \vec{n}  \text{ on } \Gamma_{\mathrm{in}}$ and $\vec{b}^\mu \cdot \vec{n} = 0  \text{ on } \Gamma_{\mathrm{wall}}$, where $\vec{n}$ denotes the unit normal, and the non-parametric part of the domain is
\begin{equation*}
    \Gamma_{\mathrm{in}}=\{0\}\times(0.8, 1), \quad
    \Gamma_{\mathrm{out}}=\{1\}\times(0, 0.2), \quad
    \Gamma_{\mathrm{wall}}=\partial\Omega - (\Gamma_{\mathrm{in}}\cup\Gamma_{\mathrm{out}}).
\end{equation*}
Furthermore, we set the parametric inflow condition using the shifted 
Poiseuille profile in dependence of an inflow angle $\mu_2 \in [-\pi/4, \pi/4]$ and non-parametric
inflow velocity $v_\mathrm{in} = 10$ and spike $\eta = 0.2$, i.e.
\begin{equation*}
    \vec{j}^\mu(x)=
    \frac{v_{\mathrm{in}}}{4\eta} \left(0.1^2 - (x_2 - 0.9)^2\right) \cdot \binom{\cos(\mu_2)}{\sin(\mu_2)}.
\end{equation*}
On the other hand, we specifically allow the coating height $h_c^\mu := \mu_1$ 
and the washcoat height $h_w^\mu := 0.3 - \mu_1$, to be
parametric for $\mu_1 \in [0.1, 0.3]$ under the condition, 
that they fill the space between midpoint and inflow and outflow boundary respectively. 
Such a formulation also gives us the definition of the catalyst filter regions via
\begin{equation*}
    \Omega_c^\mu = \{(x_1, x_2) \in \Omega : h_w^\mu< \euknorm{x_2-\tfrac12} < h_w^\mu +h_c^\mu \}, \quad
    \Omega_w^\mu = \{(x_1, x_2) \in \Omega : \euknorm{x_2-\tfrac12} < h_w^\mu \}.
\end{equation*}
This is meant to model
a situation in which the general connection geometry of a catalyst component cannot be varied,
but the internals may. For simplicity, we assume that the permeabilities $k_c = 0.02$ and $k_w = 0.5$ are 
constant and non-parametric in the interiors of the parametric regions $\Omega_c^\mu$ and $\Omega_w^\mu$ as follows
\begin{equation*}
    k^\mu := \mathbb{1}_{\Omega - (\Omega_c^\mu \cup \Omega_w^\mu)} + 
        k_c \mathbb{1}_{\Omega_c^\mu} + k_w \mathbb{1}_{\Omega_w^\mu}.
\end{equation*}

\begin{figure}[tbp]
    \centering
    \begin{subfigure}{0.48\linewidth}
        \centering
        \begin{tikzpicture}[>=Latex]

            \def\W{4.2}
            \def\H{4.2}

            \draw[thick] (0,0) rectangle (\W,\H);

            \fill[white] (0,3.15) rectangle (\W,4.2);
            \fill[gray!25] (0,2.65) rectangle (\W,3.15);
            \fill[gray!45] (0,1.55) rectangle (\W,2.65);
            \fill[gray!25] (0,1.05) rectangle (\W,1.55);
            \fill[white] (0,0) rectangle (\W,1.05);

            \draw[gray!60] (0,3.15) -- (\W,3.15);
            \draw[gray!60] (0,2.65) -- (\W,2.65);
            \draw[gray!60] (0,1.55) -- (\W,1.55);
            \draw[gray!60] (0,1.05) -- (\W,1.05);
            \draw[dotted, black] (0,2.1) -- (1.7, 2.1);
            \draw[dotted, black] (2.5,2.1) -- (4.2, 2.1);
            \draw[<->, black, thin] (0.5, 2.1) -- (0.5, 1.55);
            \draw[<->, black, thin] (0.1, 2.1) -- (0.1, 3.15);

            \draw[line width=1.4pt,cividisLight] (0,3.15) -- (0,4.2);
            \draw[line width=1.4pt,cividisDark] (\W,0) -- (\W,1.05);

            \node at (0.9, 1.8) {$h^\mu_w$};
            \node at (0.5, 2.85) {$h^\mu_c$};

            \node at (0.55,3.7) {$\Gamma_{\mathrm{in}}$};
            \node at (2.1,2.1) {$\Omega^\mu_w$};
            \node at (3.55,0.52) {$\Gamma_{\mathrm{out}}$};
        \end{tikzpicture}
        \caption{Catalytic filter model}
    \end{subfigure}
    \begin{subfigure}{0.48\linewidth}
        \centering
        \begin{tikzpicture}[>=Latex]
            \def\S{4.2}
            \draw[thin] (0,0) rectangle (\S,\S);

            \foreach \x/\y/\ux/\uy/\len in {
                0.217/0.217/0.0270/-0.9996/0.113,
                0.637/0.217/0.2715/-0.9624/0.114,
                1.057/0.217/0.7072/-0.7071/0.115,
                1.477/0.217/0.7791/-0.6269/0.119,
                1.897/0.217/0.8246/-0.5658/0.122,
                2.317/0.217/0.8563/-0.5165/0.126,
                2.737/0.217/0.8794/-0.4760/0.131,
                3.157/0.217/0.8954/-0.4452/0.136,
                3.577/0.217/0.9068/-0.4216/0.141,
                3.997/0.217/0.9150/-0.4035/0.147,
                0.217/0.637/0.0014/-1.0000/0.149,
                0.637/0.637/0.0143/-0.9999/0.149,
                1.057/0.637/0.0412/-0.9992/0.149,
                1.477/0.637/0.0692/-0.9976/0.150,
                1.897/0.637/0.0958/-0.9954/0.151,
                2.317/0.637/0.1196/-0.9928/0.152,
                2.737/0.637/0.1402/-0.9901/0.153,
                3.157/0.637/0.1577/-0.9875/0.154,
                3.577/0.637/0.1723/-0.9850/0.155,
                3.997/0.637/0.1846/-0.9828/0.156,
                0.217/1.057/0.0001/-1.0000/0.145,
                0.637/1.057/0.0014/-1.0000/0.145,
                1.057/1.057/0.0042/-1.0000/0.145,
                1.477/1.057/0.0070/-1.0000/0.145,
                1.897/1.057/0.0098/-1.0000/0.145,
                2.317/1.057/0.0124/-0.9999/0.145,
                2.737/1.057/0.0148/-0.9999/0.145,
                3.157/1.057/0.0169/-0.9999/0.145,
                3.577/1.057/0.0188/-0.9998/0.145,
                3.997/1.057/0.0205/-0.9998/0.145,
                0.217/1.477/0.0000/-1.0000/0.145,
                0.637/1.477/0.0004/-1.0000/0.145,
                1.057/1.477/0.0012/-1.0000/0.145,
                1.477/1.477/0.0021/-1.0000/0.145,
                1.897/1.477/0.0029/-1.0000/0.145,
                2.317/1.477/0.0037/-1.0000/0.145,
                2.737/1.477/0.0044/-1.0000/0.145,
                3.157/1.477/0.0050/-1.0000/0.145,
                3.577/1.477/0.0055/-1.0000/0.145,
                3.997/1.477/0.0059/-1.0000/0.145,
                0.217/1.897/0.0000/-1.0000/0.145,
                0.637/1.897/0.0001/-1.0000/0.145,
                1.057/1.897/0.0003/-1.0000/0.145,
                1.477/1.897/0.0005/-1.0000/0.145,
                1.897/1.897/0.0007/-1.0000/0.145,
                2.317/1.897/0.0009/-1.0000/0.145,
                2.737/1.897/0.0011/-1.0000/0.145,
                3.157/1.897/0.0012/-1.0000/0.145,
                3.577/1.897/0.0014/-1.0000/0.145,
                3.997/1.897/0.0015/-1.0000/0.145,
                0.217/2.317/0.0000/-1.0000/0.145,
                0.637/2.317/0.0001/-1.0000/0.145,
                1.057/2.317/0.0003/-1.0000/0.145,
                1.477/2.317/0.0005/-1.0000/0.145,
                1.897/2.317/0.0007/-1.0000/0.145,
                2.317/2.317/0.0009/-1.0000/0.145,
                2.737/2.317/0.0011/-1.0000/0.145,
                3.157/2.317/0.0012/-1.0000/0.145,
                3.577/2.317/0.0014/-1.0000/0.145,
                3.997/2.317/0.0015/-1.0000/0.145,
                0.217/2.737/0.0010/-1.0000/0.145,
                0.637/2.737/0.0101/-0.9999/0.145,
                1.057/2.737/0.0290/-0.9996/0.145,
                1.477/2.737/0.0480/-0.9988/0.145,
                1.897/2.737/0.0657/-0.9978/0.145,
                2.317/2.737/0.0813/-0.9967/0.146,
                2.737/2.737/0.0947/-0.9955/0.146,
                3.157/2.737/0.1060/-0.9944/0.146,
                3.577/2.737/0.1156/-0.9933/0.147,
                3.997/2.737/0.1236/-0.9923/0.147,
                0.217/3.157/0.0039/-1.0000/0.149,
                0.637/3.157/0.0392/-0.9992/0.149,
                1.057/3.157/0.1090/-0.9940/0.150,
                1.477/3.157/0.1753/-0.9845/0.151,
                1.897/3.157/0.2324/-0.9726/0.153,
                2.317/3.157/0.2797/-0.9601/0.154,
                2.737/3.157/0.3183/-0.9479/0.156,
                3.157/3.157/0.3498/-0.9368/0.157,
                3.577/3.157/0.3758/-0.9267/0.159,
                3.997/3.157/0.3974/-0.9177/0.160,
                0.217/3.577/0.1612/-0.9869/0.222,
                0.637/3.577/0.8243/-0.5662/0.245,
                1.057/3.577/0.9492/-0.3148/0.266,
                1.477/3.577/0.9808/-0.1948/0.285,
                1.897/3.577/0.9920/-0.1266/0.301,
                2.317/3.577/0.9965/-0.0840/0.314,
                2.737/3.577/0.9984/-0.0558/0.324,
                3.157/3.577/0.9993/-0.0363/0.332,
                3.577/3.577/0.9997/-0.0225/0.338,
                0.217/3.997/0.9003/0.4353/0.331,
                0.637/3.997/0.9979/0.0642/0.349,
                1.057/3.997/0.9999/-0.0165/0.360,
                1.477/3.997/0.9996/-0.0284/0.365,
                1.897/3.997/0.9994/-0.0346/0.368,
                2.317/3.997/0.9992/-0.0386/0.370,
                2.737/3.997/0.9991/-0.0416/0.372,
                3.157/3.997/0.9990/-0.0439/0.373,
                3.577/3.997/0.9990/-0.0458/0.374
            }
            {
                \draw[-{Stealth[length=1.8mm,width=0.9mm]}, line width=0.35pt]
                    (\x,\y) -- ++({1.5*\len*\ux},{1.5*\len*\uy});
            }
        \end{tikzpicture}
        \caption{Darcy flow field $\vec{b}^\mu$}
    \end{subfigure}
    \caption{Illustration of the catalytic filter benchmark problem.\label{fig: darcy model}}
\end{figure}
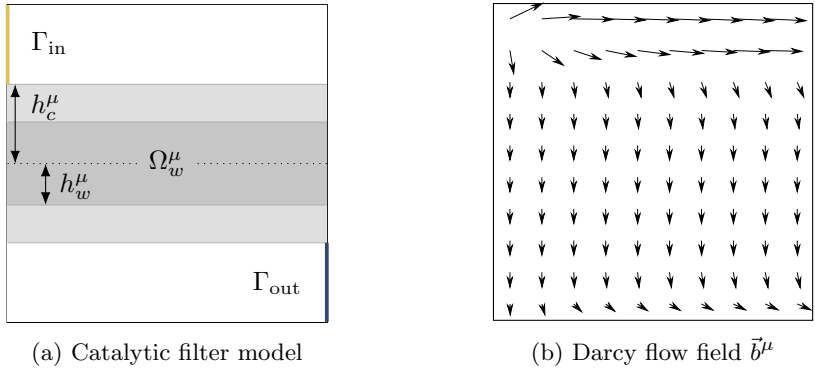

{\bf Adapted transport problem.}
Let $u^{\mu, \nu} = u^{\mu, \nu}(x,t)$ for $x, t \in \Omega \times [0, T]$ denote some transported scalar 
concentration through the catalyst filter.
In the implementation, the transport velocity (advection) is not prescribed analytically, but taken from the
previous Darcy flow problem solution $\vec{b}^\mu$ in equation (\ref{eq: darcy problem}).
With zero diffusion ($A \equiv 0$), the strong form is given by
\begin{equation} \label{eq: transport problem}
\begin{cases}
    \partial_t u^{\mu, \nu} + \nabla \cdot \big(\vec{b}^{\mu} u\big) + c^{\mu, \nu} u = 0,
    & \text{in } \Omega\times(0,T],\\[1mm]
    u^{\mu, \nu} = g_{\mathrm{in}},
    & \text{on } \Gamma_{\mathrm{in}}\times[0, T],\\
    u^{\mu, \nu}(\cdot,0) \equiv 0,
    & \text{on } \Omega - \Gamma_\mathrm{in} \times \{0\}.
\end{cases}
\end{equation}
The imposed inflow concentration profile is
\begin{equation*}
    g_{\mathrm{in}}(x)=
    \begin{cases}
        \frac12 \left(1+\cos \left(\pi\,\dfrac{|x_2-0.9|}{0.1}\right)\right), & |x_2-0.9|<0.1,\\[2mm]
        0, & \text{otherwise},
    \end{cases}
\end{equation*}
and is re-enforced each time step to keep inlet feed constant in time.
Moreover, we set the reaction coefficient $c^{\mu, \nu}$ to be piece-wise constant on the parametric
coating and washcoat regions. Specifically,
\begin{equation*}
    c^{\mu, \nu} :=
        \nu_1 \mathbb{1}_{\Omega_c^\mu} + \nu_2 \mathbb{1}_{\Omega_w^\mu},
\end{equation*}
where $\nu = (\nu_1, \nu_2) \in [0.1, 0.3] \times [0.3, 1]$.

\subsection{Training specifications}

For all models, we uniformly use the LeakyReLU instead of the ReLU to form the neural networks
contained in the models. This activation function is mathematically
equivalent to the ReLU as justified by the paper \cite{geist2020}. Hence, we overwrite the
definition of the ReLU with
\begin{equation*}
    \rho(x) := \begin{cases}
        x, \quad \text{if }x \geq 0 \\
        a x, \quad \text{if }x < 0.
    \end{cases}
\end{equation*}
We will put $a = 0.1$ as the constant in every instance of the activation unit.

{\bf Normalization.}
For the normalization, we use the min-max normalization in a generalized sense; for
each parameter set we define 
\begin{equation*}
    M_\text{max}^{i} = \max\limits_{j = 1, \dots, N_{s_1}} M_{ij}^\text{train}, \quad 
    M_\text{min}^{i} = \min\limits_{j = 1, \dots, N_{s_1}} M_{ij}^\text{train},
\end{equation*}
and its subsequent min-max normalization via
\begin{equation*} \label{eq: min-max normalization}
    M_{ij}^\text{train} \mapsto \frac{M_{ij}^\text{train} - M_\text{max}^i}{M_\text{max}^i - M_\text{min}^i},
    \quad \text{for } i = 1, \dots, p, \, j = 1, \dots, N_{s_1}.
\end{equation*}
For $N^\text{train}$ the analogous approach can be used. The solution batch is normalized in the
same way, after defining the minimum and maximum as
\begin{equation*}
    S_\text{max} = \max\limits_{i = 1, \dots, N_h} \max\limits_{j = 1, \dots, N_s N_t} S_{ij}^\text{train},
    \quad S_\text{min} = \min\limits_{i = 1, \dots, N_h} \min\limits_{j = 1, \dots, N_s N_t} S_{ij}^\text{train}.
\end{equation*}

{\bf Optimizer and Schedular.}
All offline training phases will be performed with the AdamW optimizer \cite{loshchilov2019} 
and the ReduceLROnPlateau schedular \cite{alkababji2022} 
to achieve adaptive learning rates. Moreover, an early stopping criterion is imposed upon the
validation loss.

{\bf Relative Error.}
For evaluation, we rely on the following discretized version of the relative error $\Ecal_\text{R} \in [0, \infty)$
over a test sample $\{(\mu_i, \nu_i), u_h^{\mu_i, \nu_i}\}_{i=1}^{N_\text{test}}$ with time-trajectories
in $\Tcal$ defined as
\begin{equation*}
    \Ecal_\text{R} = \frac{1}{N_\text{test}} \sum_{i=1}^{N_\text{test}} \left(
        \frac{
            \sqrt{\frac{1}{N_t} \sum_{k=1}^{N_t} \norm{u_h^{\mu_i, \nu_i, k \Delta t} - 
            \hat{u}_h^{\mu_i, \nu_i, k \Delta t}}^2}
        }{
            \sqrt{\frac{1}{N_t} \sum_{k=1}^{N_t} \norm{u_h^{\mu_i, \nu_i, k \Delta t}}^2}
        }
        \right),
\end{equation*}
where we set $\hat{u}_h^{\mu, \nu, t} \in \R^{N_h}$ to denote some solution prediction for parameters $(\mu, \nu, t)
\in \Theta \times \Theta' \times [0, T]$. 

{\bf Hardware Specification and Source Code.}
Training Data was achieved in \texttt{DUNE}~\cite{dune}, using the DUNE-PDELab~\cite{dune-pdelab} discretization toolbox. 
For the implementation and data inheritance to \texttt{PyTorch}, 
a combination of \texttt{pyMOR}~\cite{pymor2016} and \texttt{Numpy}~\cite{numpy}
was used.
Model training is performed using the Python library \texttt{PyTorch}~\cite{pytorch2019} on
an Nvidia Quadro RTX 6000. 
All source code required to reproduce the presented results, accompanied by a concise 
\texttt{README.md}, is openly available at 
\url{https://github.com/dawid-kotowski/doddlrom}.

\begin{flushleft}
    \refstepcounter{algorithm}
    \textbf{Algorithm \thealgorithm.} DOD-DL-ROM training algorithm (offline)
    \label{alg: dod-dl-rom training}
    
    \begin{algorithmic}[1]
    \Require Parameter matrices $M \in \R^{p\times N_{s_1}}, N \in \R^{q \times N_{s_2}}$, 
    snapshot matrix $S \in \R{N_h \times N_t N_s}$, 
    training-validation splitting fraction $\alpha$, starting learning rate $\eta^0$, 
    batch size $N_b$, batch number $N_{nb}$, maximum number of epochs $N_{\text{epochs}}$, dimensions $n, N', N_A$
    \Ensure Optimal model parameters $\theta_\text{DOD-DL-ROM}^* = (\theta_\text{DOD}^*, \theta_\text{D}^*,
    \theta_\text{DF}^*)$
    \State Compute pre-reduction POD basis matrix $\A \in \R^{N_h \times N_A}$ via Definition
    \ref{def: POD}
    \State Randomly shuffle $M$ and $S$
    \State Split data in $M = [M^{\text{train}}, M^{\text{val}}], N=[N^\text{train}, N^\text{val}]$ and 
    $S = [S^{\text{train}}, S^{\text{val}}]$ according to $\alpha$
    \State $S_{N_A}^\text{train} \gets \A^T \G S^\text{train}$ and $S_{N_A}^\text{val} \gets \A^T \G S^\text{val}$
    \State Normalize data in $M, N$ and $S_{N_A} = [S_{N_A}^{\text{train}}, S_{N_A}^{\text{val}}]$
    \State Train inner DOD module $\Phi_{\tilde{\V}}(\theta^*_\text{DOD})$ (see \cite{franco2024})
    \State Initialize DL-ROM weights $(\theta^0_\text{D}, \theta^0_\text{E}, \theta^0_\text{DF}) 
    \gets \text{RandomInit}(\theta^0_\text{D}, \theta^0_\text{E}, \theta^0_\text{DF})$
    \State $n_e = 0$
    \While{(¬early-stopping and $n_e \leq N_{\text{epochs}}$)}
    \For{$k = 1:N_{nb}$}
        \State Sample a batch $(M^{\text{batch}}, N^{\text{batch}}, S_{N_A}^{\text{batch}}) \subset 
        (M^{\text{train}}, N^{\text{train}}, S_{N_A}^{\text{train}})$
        \For{$t =1:N_t$}
            \State $S_{N'}^{\text{batch}, t} \gets \tilde{\V}^T_{M^\text{batch}, t} \cdot 
            S_{N_A}^{\text{batch}, t} \in \R^{N_b \times N'}$
            \State $S_{N'}^{\text{batch}, t} \gets 
            \text{reshape}(S_{N'}^{\text{batch}, t}) \in 
            \R^{N_b \times \sqrt{N'} \times \sqrt{N'}}$
            \State $z^{(n_e N_{\text{nb}}+k), t} \gets
            R_\rho(\Psi_n'(\theta_\text{E}^{n_e N_{\text{nb}}+k}))(S_{N'}^{\text{batch}, t})$
            \State $\hat S_{N'}^{\text{batch}, t} \gets
            R_\rho(\Psi_{N'}(\theta_\text{D}^{n_e N_{\text{nb}}+k}) \odot \Phi_n(\theta_\text{DF}^{n_e N_{\text{nb}}+k}))(M^{\text{batch}}, N^\text{batch}, t)$
            \State $\hat S_{N'}^{\text{batch}, t} \gets
            \text{reshape}(\hat S_{N'}^{\text{batch}, t}) 
            \in \R^{N_b \times {N'}}$
        \EndFor
        \State Collect $\Lcal$
        from Subsection \ref{subsec: dod-dl-rom} on $(M^{\text{batch}}, N^\text{batch}, S_{N_A}^{\text{batch}})$ 
        \State $\theta^{n_e N_{\text{nb}}+k+1} = 
        \text{AdamW}(\eta^0, \nabla_\theta \mathcal{J}, \theta^{n_e N_{\text{nb}}+k})$
        \State $\eta^{n_e + 1} \gets \text{ReduceOnPlateau}(\Lcal_\text{DOD-DL-ROM})$
    \EndFor
    \State Repeat 9--24 on $(M^{\text{val}}, N^\text{val}, S_{N_A}^{\text{val}})$ 
    with the updated weights $\theta^{n_e N_{\text{nb}}+k+1}$
    \State Accumulate loss $\Lcal_\text{DOD-DL-ROM}$
        from Subsection \ref{subsec: dod-dl-rom} on $(M^{\text{val}}, N^\text{val}, S_{N_A}^{\text{val}})$
    \State $n_e = n_e + 1$
    \EndWhile
    \end{algorithmic}
\end{flushleft}

\subsection{Benchmark results}

In order to support the theoretical findings, we now present the model analytics of the parametric catalyst filter
problem introduced in the beginning of the section. 

\begin{figure}[b]
    \centering
    \begin{minipage}[t]{0.55\linewidth}
        \vspace{0pt}
        \centering
        \includegraphics[width=.9\linewidth]{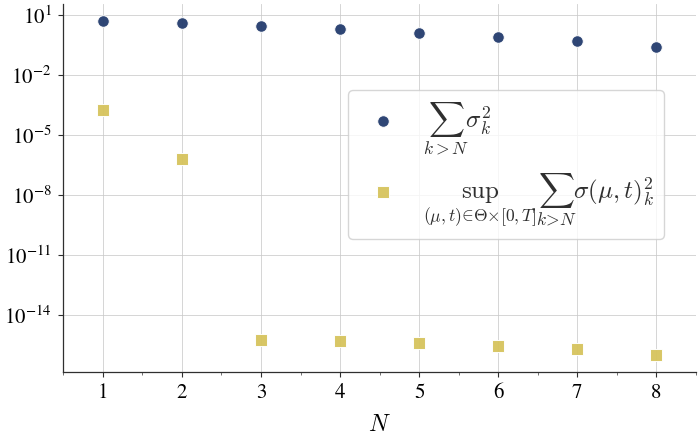}
        \caption{KnW decay}
        \label{fig: knw decay}
    \end{minipage}\hfill
    \begin{minipage}[t]{0.40\linewidth}
        \vspace{35pt}
        \centering
        \small
        \setlength{\tabcolsep}{4pt}
        \begin{tabular}{ccccc}
            \toprule
            $n$ & $N'$ & $N$ & $N_A$ & $N_h$ \\
            \midrule
            $2$ & $2$ & $8$ & $10$ & $3600$ \\
            \bottomrule
        \end{tabular}
        \captionof{table}{Reduction numbers}
        \label{tab: reduction numbers}
    \end{minipage}
\end{figure}
Figure \ref{fig: knw decay} shows a somewhat motivating
development of the Kolmogorov $N$-width decay. Namely, the global decay is significantly slower than its $(\mu, t)$-fixed
counterpart. However, the actual Assumption \ref{assumption KnW decay} is not satisfied, since the decay appears to
be still slightly exponential for higher reduction number. This might be due to the strongly confined parameter numbers.
In an industry relevant setting this confinement is justified, as most of the parametric influence can 
be bounded well.

For the sake of visual confirmation, Figure \ref{fig: visual} shows the side by side plots of three trained models.
We will generally use the dimensional numbers for the reduction as shown in Table \ref{tab: reduction numbers}.
We stress, that for higher precision needs the numbers can be scaled while retaining the general ordering, but the
singular value decay suggests, that this is already sufficient to show promising results.
\begin{figure}[t]
    \centering
    \begin{subfigure}{0.99\linewidth}
        \centering
        \includegraphics[width=0.99\linewidth]{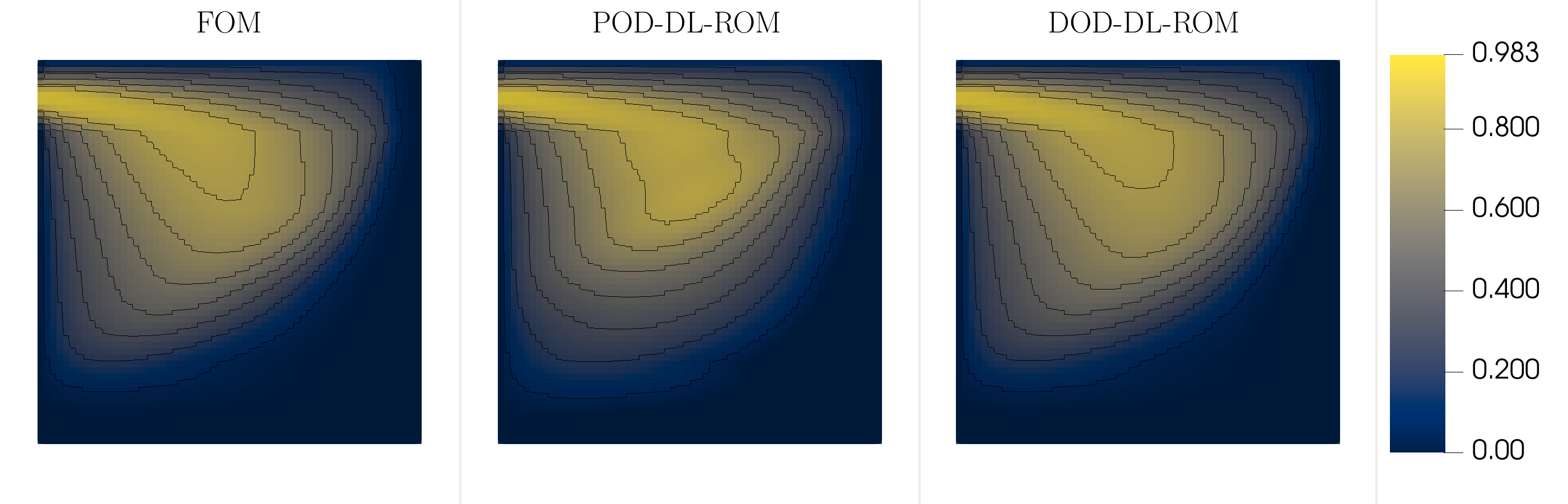}
        \caption{$\mu = (1/30, -\pi/12), \nu = (1/30, 8/15)$ and $t = 0.6$}    
    \end{subfigure}
    \begin{subfigure}{0.99\linewidth}
        \centering
        \includegraphics[width=0.99\linewidth]{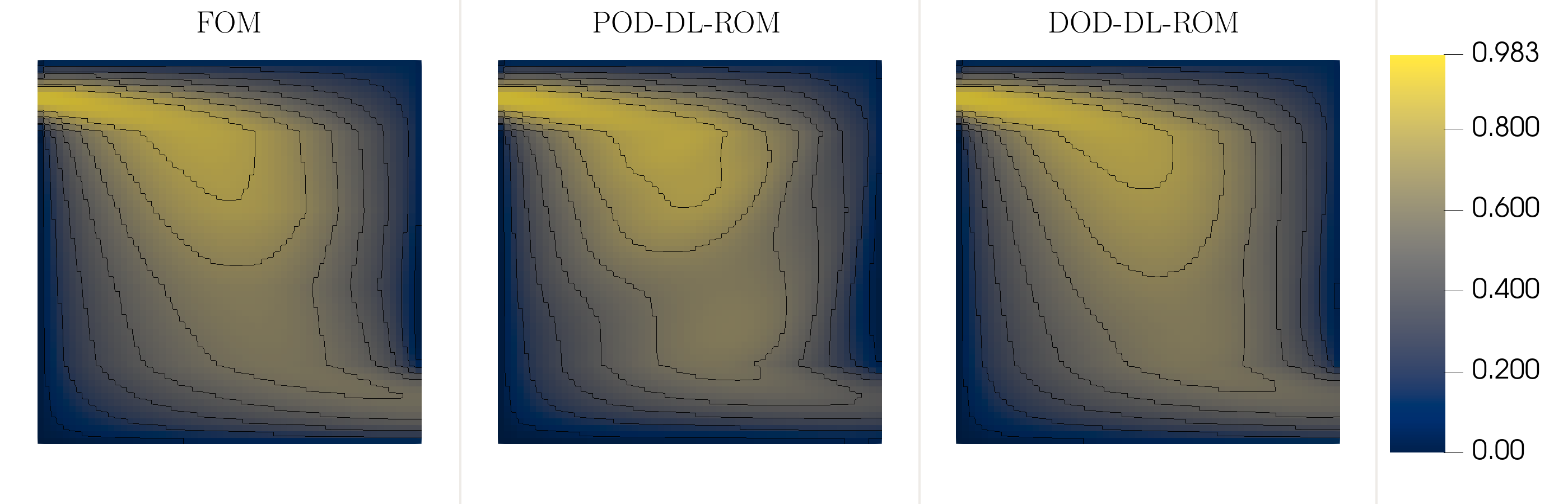}
        \caption{$\mu = (1/30, -\pi/12), \nu = (1/30, 8/15)$ and $t = 1.2$}   
    \end{subfigure}
    \caption{Time-dependent solutions}
    \label{fig: visual}
\end{figure}

Here, we have chosen baseline models with around $~10^5$ number of active weights, to show high precision for
aggressive reduction numbers. Indeed, while this is only one example, the
difference of adaptive versus non-adaptive reduced basis is apparent in the moving front. Analysing 
other snapshots shows, that the POD does have high precision for the most prominent long term features,
but shows caveats for quick moving fronts. While the motivation for
the architechtural design is given in \textcite{fresca2022}, the precise choices can be found in the code repository.
\begin{table}[htbp]
\centering
\begin{tabular}{llrrrrr}
\toprule
Level & Model & $\omega$ & $\omega/\omega_{\text{POD}}$ 
& $t_{\mathrm{fwd}}$ [ms] & $t_{\mathrm{fwd}}/t_{\mathrm{fwd}}^{\mathrm{POD}}$ 
& Speedup \\
\midrule
\multirow{3}{*}{Low}
  & POD-DL-ROM &   308 & 1.00 & 32.17 & 1.00 & $684\times$ \\
  & DOD-DL-ROM &   573 & 1.86 & 33.33 & 1.04 & $660\times$ \\
  & DOD+DFNN   &   444 & 1.44 & 30.86 & 0.96 & $713\times$ \\
\addlinespace[0.3em]

\multirow{3}{*}{Medium}
  & POD-DL-ROM &  5429 & 1.00 & 32.45 & 1.00 & $678\times$ \\
  & DOD-DL-ROM & 10530 & 1.94 & 36.29 & 1.12 & $606\times$ \\
  & DOD+DFNN   &  5753 & 1.06 & 30.87 & 0.95 & $713\times$ \\
\addlinespace[0.3em]

\multirow{3}{*}{High}
  & POD-DL-ROM &  34137 & 1.00 & 34.52 & 1.00 & $637\times$ \\
  & DOD-DL-ROM & 106723 & 3.13 & 36.29 & 1.05 & $606\times$ \\
  & DOD+DFNN   &  67313 & 1.97 & 33.28 & 0.96 & $661\times$ \\
\bottomrule
\end{tabular}
\caption{Active Weight compared against absolute forward timings $t_\mathrm{fwd}$
and speedup against FOM baseline}
\label{tab: weights timing scaling}
\end{table}

Furthermore, we present Table \ref{tab: weights timing scaling} to show a clear relationship between the
number of active weights and scaling of speedup. The models are separated by complexity and an additional
comparison of relative speedup and number of active weights with regard to the POD-DL-ROM is given. Note,
that indeed a more drastic difference of complexity translates with less magnitude to the timings.

\begin{figure}[htbp]
    \centering
    \includegraphics[height=0.45\linewidth, keepaspectratio]{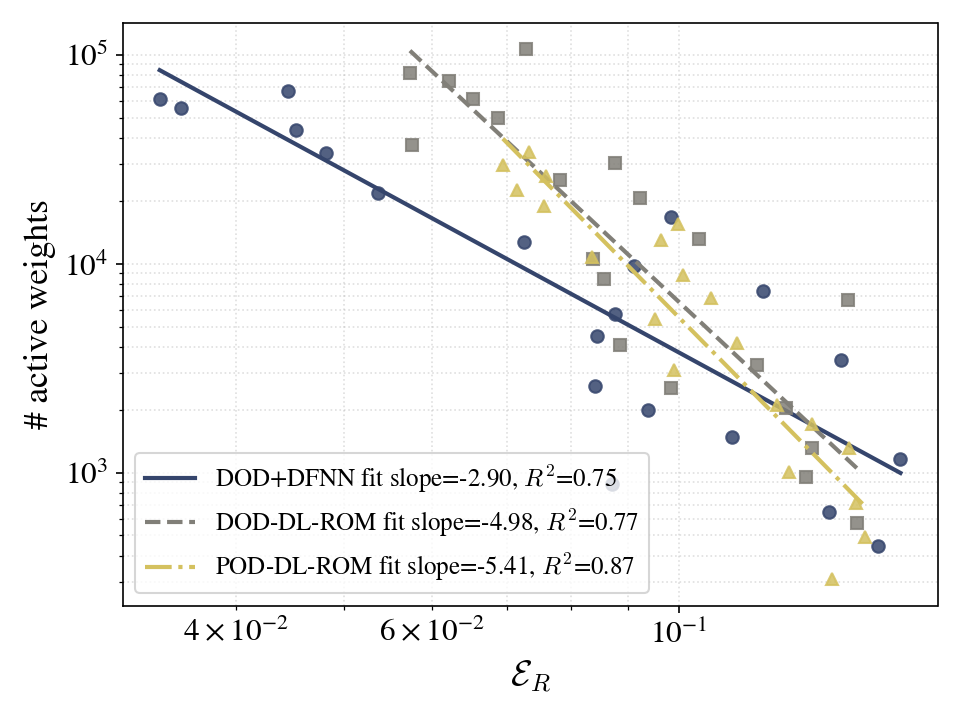}
    \caption{Number of active weights for ROMs in relation to $\Ecal_\text{R}$}
    \label{fig: weight comparison}
\end{figure}
The main result of this paper, however, is the comparative analysis of both DOD- and POD-based
architectures with regard to the
number of weights for a fixed relative error $\Ecal_\text{R}$. For this, one can simply plot the 
growing complexity (in exponential terms) with regard to the development of the relative error
for each model. As can be seen in Figure 
\ref{fig: weight comparison}, the DOD-based methods tend to develop less cost in terms of complexity,
and thus slower speedup, than the POD-based alternative, for growing demand in precision. Notice, that
we can even make out a crossing point, where the initially better performance of the POD-DL-ROM gets 
outcompeted by the improved adaptivity of the DOD. 
The existence of such a point makes sense, as the main bottleneck for the feasibility of
the DOD-based methods is given by 
\begin{equation*}
    \frac{n}{s-1} > \frac{\log(N_h)}{\log(\varepsilon^{-1})}.
\end{equation*}
See Proposition \ref{prop: regularity discussion for POD vs DOD}
for any fixed relative error tolerance $\varepsilon > 0$. Obviously, this is satisfied most likely for very
small errors and containable $N_h$. However, since $s$ is not known,
one can not be sure how small the right-hand side needs to be. 
Note, that the proposition uses $N_h = N_A$ and thus the
actual bottleneck might be a bit less tight, because the complexity bound is only imposed upon the
neural network parts of the model. If the earlier results of Theorem \ref{theo: PAC bound for DOD-DL-ROM}
would be expanded to situations of $N_A < N_h$, one might potentially achieve a 
theoretic frame for that situation aswell.

\section{Conclusion}
In this article, we have introduced a novel adaptive deep learning-based reduced order model, 
denoted DOD-DL-ROM, for hybrid-type parabolic PDEs with distinct decay behaviors of the 
Kolmogorov N-width. By combining the strengths of deep learning-based surrogates with 
projection-based model order reduction, our approach provides a robust, real-time, and 
physically consistent solution for complex industrial flow processes. 
We have rigorously analyzed the error and complexity of the proposed method, 
providing a quantitative dependence of the online performance on the regularity of an 
associated optimal map. Our numerical experiments have demonstrated the superiority of 
DOD-DL-ROM over traditional POD-based ROMs for a catalyst filter benchmark problem. 
The results of this work have significant implications for the development of efficient 
and accurate surrogate models for complex systems, enabling accelerated product development 
and digital transformation in industries reliant on physical simulations. 
Future research directions include the extension of the DOD-DL-ROM approach to 
other types of PDEs and the exploration of its potential applications in 
fields such as fluid dynamics or heat transfer. Another point of interest might be 
the analysis of the regularity of the optimal DOD map as given in 
Assumption \ref{assumption additional lipschitz}.

\section*{Declarations}

\subsection*{Availability of data and materials}
The source code used to generate the datasets supporting the conclusions of this article is available in the doddlrom repositories, \url{https://github.com/dawid-kotowski/doddlrom.git} and \url{https://github.com/dawid-kotowski/darcyflow.git}.

\subsection*{Competing interest}
The authors declare that they have no competing interests.

\subsection*{Funding}
This study was funded by the Deutsche Forschungsgemeinschaft (DFG, German Research Foundation) under Germany's Excellence Strategy EXC 2044/2 –390685587, Mathematics Münster: Dynamics–Geometry–Structure.

\subsection*{Author's contributions}
DK has introduced the definition of the newly discussed methods, developed the main theorems of this article
and provided their proofs.
MO has revised and reworked the Introduction, the Conclusion and any placement in current literature.
Further, MO has reviewed all other contributions, mathematical rigor and advised in crucial points
of the numerical experiments.

\subsection*{Acknowledgements}
Not applicable.


\bibliography{bibliography}

@misc{franco2024,
  author = {Franco, Nicola Rares and Manzoni, Andrea and Zunino, Paolo and Hesthaven, Jan S.},
  title = {Deep orthogonal decomposition: a continuously adaptive data-driven approach to model order reduction},
  year = {2024},
  howpublished = {Preprint, arXiv:2404.18841 [math.NA] (2024)},
  url = {https://arxiv.org/abs/2404.18841},
  note = {arXiv:2404.18841}
}

@misc{brivio2023,
      title={Error estimates for POD-DL-ROMs: a deep learning framework for reduced order modeling of nonlinear parametrized PDEs enhanced by proper orthogonal decomposition}, 
      author={Simone Brivio and Stefania Fresca and Nicola Rares Franco and Andrea Manzoni},
      year={2023},
      eprint={2305.04680},
      archivePrefix={arXiv},
      primaryClass={math.NA},
      url={https://arxiv.org/abs/2305.04680}, 
}

@article{fresca2022,
   title={POD-DL-ROM: Enhancing deep learning-based reduced order models for nonlinear parametrized PDEs by proper orthogonal decomposition},
   volume={388},
   ISSN={0045-7825},
   url={http://dx.doi.org/10.1016/j.cma.2021.114181},
   DOI={10.1016/j.cma.2021.114181},
   journal={Computer Methods in Applied Mechanics and Engineering},
   publisher={Elsevier BV},
   author={Fresca, Stefania and Manzoni, Andrea},
   year={2022},
   month=jan, pages={114181} }

@misc{fresca2020,
      title={A comprehensive deep learning-based approach to reduced order modeling of nonlinear time-dependent parametrized PDEs}, 
      author={Stefania Fresca and Luca Dede and Andrea Manzoni},
      year={2020},
      eprint={2001.04001},
      archivePrefix={arXiv},
      primaryClass={math.NA},
      url={https://arxiv.org/abs/2001.04001}, 
}

@book{quarteroni2015rb,
  title     = {Reduced Basis Methods for Partial Differential Equations},
  author    = {Quarteroni, Alfio and Manzoni, Andrea and Negri, Federico},
  year      = {2015},
  publisher = {Springer},
  address   = {Cham},
  doi       = {10.1007/978-3-319-15431-2}
}

@misc{gühring2019nn,
      title={Error bounds for approximations with deep ReLU neural networks in $W^{s,p}$ norms}, 
      author={Ingo Gühring and Gitta Kutyniok and Philipp Petersen},
      year={2019},
      eprint={1902.07896},
      archivePrefix={arXiv},
      primaryClass={math.FA},
      url={https://arxiv.org/abs/1902.07896}, 
}

@article{yarotsky2017nn,
title = {Error bounds for approximations with deep ReLU networks},
journal = {Neural Networks},
volume = {94},
pages = {103-114},
year = {2017},
issn = {0893-6080},
doi = {https://doi.org/10.1016/j.neunet.2017.07.002},
url = {https://www.sciencedirect.com/science/article/pii/S0893608017301545},
author = {Dmitry Yarotsky}
}

@article{hornik1991nn,
  title = {Approximation capabilities of multilayer feedforward networks},
  journal = {Neural Networks},
  volume = {4},
  number = {2},
  pages = {251-257},
  year = {1991},
  issn = {0893-6080},
  doi = {https://doi.org/10.1016/0893-6080(91)90009-T},
  url = {https://www.sciencedirect.com/science/article/pii/089360809190009T},
  author = {Kurt Hornik}
}

@inproceedings{lumley1967,
  author = {Lumley, J. L.},
  title = {The structure of inhomogeneous turbulent flows},
  booktitle = {Atmospheric Turbulence and Radio Wave Propagation},
  editor = {Yaglom, A. M. and Tatarsky, V. I.},
  year = {1967},
  publisher = {Nauka},
  address ={Moscow},
  pages = {166--178}
}

@article{sirovich1987,
  author = {Sirovich, L.},
  title = {Turbulence and the dynamics of coherent structures. Part I: Coherent structures},
  journal = {Journal of Fluid Mechanics},
  volume = {175},
  pages = {419--427},
  year = {1987},
  doi = {10.1017/S0022112087000199}
}

@article{rowley2009,
  author = {Rowley, C. W. and Mezi{\'c}, I. and Bagheri, S. and Schlatter, P. and Henningson, D. S.},
  title = {Spectral analysis of nonlinear flows},
  journal = {Journal of Fluid Mechanics},
  volume = {641},
  pages = {115--127},
  year = {2009},
  doi = {10.1017/S0022112009992059}
}

@inproceedings{schmid2010,
  author = {Schmid, P. J.},
  title = {Dynamic mode decomposition of numerical and experimental data},
  booktitle = {Proceedings of the 61st Annual Meeting of the APS Division of Fluid Dynamics},
  year = {2010}
}

@article{barrault2004,
  author = {Barrault, M. and Maday, Y. and Nguyen, N. C. and Patera, A. T.},
  title = {An 'empirical interpolation' method: application to efficient reduced-basis discretization of partial differential equations},
  journal = {Comptes Rendus Mathematique},
  volume = {339},
  number = {9},
  pages = {667--672},
  year = {2004},
  doi = {10.1016/j.crma.2004.08.006}
}

@article{peherstorfer2018,
author = {Peherstorfer, Benjamin and Willcox, Karen and Gunzburger, Max},
year = {2018},
month = {01},
pages = {550-591},
title = {Survey of Multifidelity Methods in Uncertainty Propagation, Inference, and Optimization},
volume = {60},
journal = {SIAM Review},
doi = {10.1137/16M1082469}
}

@book{holmes1996,
  author = {Holmes, P. and Lumley, J. L. and Berkooz, G.},
  title = {Turbulence, Coherent Structures, Dynamical Systems and Symmetry},
  publisher = {Cambridge University Press},
  address= {Cambridge},
  year = {1996}
}

@article{kunischvolkwein2001,
  author = {Kunisch, K. and Volkwein, S.},
  title = {Galerkin proper orthogonal decomposition methods for a general equation in fluid dynamics},
  journal = {SIAM Journal on Numerical Analysis},
  volume = {40},
  number = {2},
  pages = {492--515},
  year = {2001},
  doi = {10.1137/S0036142900382612}
}

@article{chaturantabut2010,
  author = {Chaturantabut, S. and Sorensen, D. C.},
  title = {Nonlinear model reduction via discrete empirical interpolation},
  journal = {SIAM Journal on Scientific Computing},
  volume = {32},
  number = {5},
  pages = {2737--2764},
  year = {2010},
  doi = {10.1137/090766498}
}

@book{golubvanloan2013,
  author    = {Golub, Gene H. and Van Loan, Charles F.},
  title     = {Matrix Computations},
  publisher = {Johns Hopkins University Press},
  year      = {2013},
  doi       = {10.56021/9781421407944}
}

@article{carlberg2013,
   title={The GNAT method for nonlinear model reduction: Effective implementation and application to computational fluid dynamics and turbulent flows},
   volume={242},
   ISSN={0021-9991},
   url={http://dx.doi.org/10.1016/j.jcp.2013.02.028},
   DOI={10.1016/j.jcp.2013.02.028},
   journal={Journal of Computational Physics},
   publisher={Elsevier BV},
   author={Carlberg, Kevin and Farhat, Charbel and Cortial, Julien and Amsallem, David},
   year={2013},
   month=jun, pages={623–647} }

@article{carlberg2011,
author = {Carlberg, Kevin and Bou-Mosleh, Charbel and Farhat, Charbel},
year = {2011},
month = {04},
pages = {155 - 181},
title = {Efficient non-linear model reduction via a least-squares Petrov-Galerkin projection and compressive tensor approximations},
volume = {86},
journal = {International Journal for Numerical Methods in Engineering},
doi = {10.1002/nme.3050}
}

@inproceedings{loshchilov2019,
  title={Decoupled Weight Decay Regularization},
  author={Loshchilov, Ilya and Hutter, Frank},
  booktitle={International Conference on Learning Representations (ICLR)},
  year={2019},
}

@article{pymor2016,
  author  = {René Milk and Stephan Rave and Jens Saak},
  title   = {pyMOR -- Generic Algorithms and Interfaces for Model Order Reduction},
  journal = {SIAM Journal on Scientific Computing},
  volume  = {38},
  number  = {5},
  pages   = {S194--S216},
  year    = {2016},
  doi     = {10.1137/15M1026614}
}

@misc{pytorch2019,
      title={PyTorch: An Imperative Style, High-Performance Deep Learning Library}, 
      author={Adam Paszke and Sam Gross and Francisco Massa and Adam Lerer and James Bradbury and Gregory Chanan and Trevor Killeen and Zeming Lin and Natalia Gimelshein and Luca Antiga and Alban Desmaison and Andreas Köpf and Edward Yang and Zach DeVito and Martin Raison and Alykhan Tejani and Sasank Chilamkurthy and Benoit Steiner and Lu Fang and Junjie Bai and Soumith Chintala},
      year={2019},
      eprint={1912.01703},
      archivePrefix={arXiv},
      primaryClass={cs.LG},
      url={https://arxiv.org/abs/1912.01703}, 
}

@misc{geist2020,
      title={Numerical Solution of the Parametric Diffusion Equation by Deep Neural Networks}, 
      author={Moritz Geist and Philipp Petersen and Mones Raslan and Reinhold Schneider and Gitta Kutyniok},
      year={2020},
      eprint={2004.12131},
      archivePrefix={arXiv},
      primaryClass={math.NA},
      url={https://arxiv.org/abs/2004.12131}, 
}

@article{kuratowski1965,
  author    = {K. Kuratowski and C. Ryll-Nardzewski},
  title     = {A general theorem on selectors},
  journal   = {Bull. Acad. Polon. Sci. Sér. Sci. Math. Astronom. Phys.},
  volume    = {13},
  year      = {1965},
  pages     = {397--403}
}

@incollection{newey1994,
title = {Chapter 36 Large sample estimation and hypothesis testing},
series = {Handbook of Econometrics},
publisher = {Elsevier},
volume = {4},
pages = {2111-2245},
year = {1994},
issn = {1573-4412},
doi = {https://doi.org/10.1016/S1573-4412(05)80005-4},
author = {Whitney K. Newey and Daniel McFadden}
}

@article{franco2023,
  author       = {Nicola R. Franco and Andrea Manzoni and Paolo Zunino},
  title        = {A Deep Learning Approach to Reduced Order Modelling of Parameter Dependent Partial Differential Equations},
  journaltitle = {Mathematics of Computation},
  year         = {2023},
  volume       = {92},
  pages        = {483--524},
  doi          = {10.1090/mcom/3781},
  url          = {https://doi.org/10.1090/mcom/3781}
}

@article{franco2023approx,
title = {Approximation bounds for convolutional neural networks in operator learning},
journal = {Neural Networks},
volume = {161},
pages = {129-141},
year = {2023},
issn = {0893-6080},
doi = {https://doi.org/10.1016/j.neunet.2023.01.029},
url = {https://www.sciencedirect.com/science/article/pii/S0893608023000412},
author = {Nicola Rares Franco and Stefania Fresca and Andrea Manzoni and Paolo Zunino}
}

@article{bhattacharya2021,
     author = {Kaushik Bhattacharya and Bamdad Hosseini and Nikola B. Kovachki and Andrew M. Stuart},
     title = {Model {Reduction} {And} {Neural} {Networks} {For} {Parametric} {PDEs}},
     journal = {The SMAI Journal of computational mathematics},
     pages = {121--157},
     year = {2021},
     publisher = {Soci\'et\'e de Math\'ematiques Appliqu\'ees et Industrielles},
     volume = {7},
     doi = {10.5802/smai-jcm.74},
     language = {en},
     url = {https://smai-jcm.centre-mersenne.org/articles/10.5802/smai-jcm.74/}
}

@misc{marwah2021,
      title={Parametric Complexity Bounds for Approximating PDEs with Neural Networks}, 
      author={Tanya Marwah and Zachary C. Lipton and Andrej Risteski},
      year={2021},
      eprint={2103.02138},
      archivePrefix={arXiv},
      primaryClass={cs.LG},
      url={https://arxiv.org/abs/2103.02138}, 
}

@misc{dune,
      title={The DUNE Framework: Basic Concepts and Recent Developments}, 
      author={Peter Bastian and Markus Blatt and Andreas Dedner and Nils-Arne Dreier and Christian Engwer and René Fritze and Carsten Gräser and Christoph Grüninger and Dominic Kempf and Robert Klöfkorn and Mario Ohlberger and Oliver Sander},
      year={2020},
      eprint={1909.13672},
      archivePrefix={arXiv},
      primaryClass={cs.MS},
      url={https://arxiv.org/abs/1909.13672}, 
}

@article{dune-pdelab,
  author = {Bastian, Peter and Heimann, Felix and Marnach, Sven},
  year = {2010},
  month = {01},
  pages = {},
  title = {Generic implementation of finite element methods in the Distributed and Unified Numerics Environment (DUNE)},
  volume = {2},
  journal = {Kybernetika}
}

@Article{numpy,
 title         = {Array programming with {NumPy}},
 author        = {Charles R. Harris and K. Jarrod Millman and St{\'{e}}fan J.
                 van der Walt and Ralf Gommers and Pauli Virtanen and David
                 Cournapeau and Eric Wieser and Julian Taylor and Sebastian
                 Berg and Nathaniel J. Smith and Robert Kern and Matti Picus
                 and Stephan Hoyer and Marten H. van Kerkwijk and Matthew
                 Brett and Allan Haldane and Jaime Fern{\'{a}}ndez del
                 R{\'{i}}o and Mark Wiebe and Pearu Peterson and Pierre
                 G{\'{e}}rard-Marchant and Kevin Sheppard and Tyler Reddy and
                 Warren Weckesser and Hameer Abbasi and Christoph Gohlke and
                 Travis E. Oliphant},
 year          = {2020},
 month         = sep,
 journal       = {Nature},
 volume        = {585},
 number        = {7825},
 pages         = {357--362},
 doi           = {10.1038/s41586-020-2649-2},
 publisher     = {Springer Science and Business Media {LLC}},
 url           = {https://doi.org/10.1038/s41586-020-2649-2}
}

@misc{alkababji2022,
      title={Scheduling Techniques for Liver Segmentation: ReduceLRonPlateau Vs OneCycleLR}, 
      author={Ayman Al-Kababji and Faycal Bensaali and Sarada Prasad Dakua},
      year={2022},
      eprint={2202.06373},
      archivePrefix={arXiv},
      primaryClass={cs.CV},
      url={https://arxiv.org/abs/2202.06373}, 
}

@Article{HO2008,
  author        = {Haasdonk, B. and Ohlberger, M.},
  title         = {Reduced basis method for finite volume approximations of parametrized linear evolution equations},
  doi           = {10.1051/m2an:2008001},
  number        = {2},
  pages         = {277-302},
  volume        = {42},
  document_type = {Article},
  journal       = {Mathematical Modelling and Numerical Analysis},
  source        = {Scopus},
  year          = {2008},
}

@article{DHO12,
 author               = {Drohmann, Martin and Haasdonk, Bernard and Ohlberger, Mario},
 doi                  = {10.1137/10081157X},
 fjournal             = {SIAM Journal on Scientific Computing},
 issn                 = {1064-8275},
 journal              = {SIAM J. Sci. Comput.},
 mrclass              = {65M08 (35K90)},
 mrnumber             = {2914310},
 mrreviewer           = {Anthony John Roberts},
 number               = {2},
 pages                = {A937--A969},
 title                = {Reduced basis approximation for nonlinear parametrized evolution equations based on empirical operator interpolation},
 url                  = {https://doi.org/10.1137/10081157X},
 volume               = {34},
 year                 = {2012},
 }

@article {haasdonk2022new,
    AUTHOR = {Haasdonk, Bernard and Kleikamp, Hendrik and Ohlberger, Mario
              and Schindler, Felix and Wenzel, Tizian},
     TITLE = "{A new certified hierarchical and adaptive {RB}-{ML}-{ROM}
              surrogate model for parametrized {PDE}s}",
   JOURNAL = {SIAM J. Sci. Comput.},
  FJOURNAL = {SIAM Journal on Scientific Computing},
    VOLUME = {45},
      YEAR = {2023},
    NUMBER = {3},
     PAGES = {A1039--A1065},
      ISSN = {1064-8275},
   MRCLASS = {65N30 (65M60 68T07)},
  MRNUMBER = {4586596},
       DOI = {10.1137/22M1493318},
}

@article{klein2025multifidelity,
 author               = {Benedikt Klein and Mario Ohlberger},
 doi                  = {10.1007/s10444-026-10296-6},
 fjournal             = {Advances in Computational Mathematics},
 issn                 = {1019-7168},
 journal              = {Adv. Comput. Math.},
 number               = {19},
 pages                = {1--36},
 title                = {Multi-fidelity Learning of Reduced Order Models for Parabolic PDE Constrained Optimization},
 url                  = {https://doi.org/110.1007/s10444-026-10296-6},
 volume               = {52},
 year                 = {2026},
 }

@Book{MR3672144,
  title     = {Model reduction and approximation},
  doi       = {10.1137/1.9781611974829},
  editor    = {Benner, Peter and Cohen, Albert and Ohlberger, Mario and Willcox, Karen},
  isbn      = {978-1-611974-81-2},
  note      = {Theory and algorithms},
  pages     = {xx+412},
  publisher = {SIAM},
  address   = {Philadelphia, PA, USA},
  series    = {Computational Science \& Engineering},
  volume    = {15},
  mrclass   = {65-06},
  mrnumber  = {3672144},
  year      = {2017},
}

@book {MR3701994,
     TITLE = {Model reduction of parametrized systems},
    SERIES = {MS\&A. Modeling, Simulation and Applications},
    VOLUME = {17},
    EDITOR = {Benner, Peter and Ohlberger, Mario and Patera, Anthony and
              Rozza, Gianluigi and Urban, Karsten},
      NOTE = {Selected papers from the 3rd MoRePaS Conference held at the
              International School for Advanced Studies (SISSA), Trieste,
              October 13--16, 2015},
 PUBLISHER = {Springer},
      year = {2017},
   ADDRESS = {Cham, Germany},
     PAGES = {xii+504},
      ISBN = {978-3-319-58785-1; 978-3-319-58786-8},
   MRCLASS = {65-06 (93-06)},
  MRNUMBER = {3701994},
       DOI = {10.1007/978-3-319-58786-8},
}

@article{OhlRav16,
 author               = {M. Ohlberger and S. Rave},
 journal              = {Proceedings of the Conference Algoritmy},
 pages                = {1--12},
 title                = {Reduced Basis Methods: Success, Limitations and Future Challenges},
 url                  = {http://www.iam.fmph.uniba.sk/amuc/ojs/index.php/algoritmy/article/view/389},
 year                 = {2016},
 }

@ARTICLE{Fokina202567,
	author = {Fokina, Daria and Toktaliev, Pavel and Herkert, Robin and Wenzel, Tizian and Haasdonk, Bernard and Iliev, Oleg},
	title = {Machine Learning Methods Based Prediction of Breakthrough Curves in Reactive Porous Media from Peclet and Damköhler Numbers},
	year = {2025},
	journal = {Studies in Computational Intelligence},
	volume = {1219 SCI},
	pages = {67 – 84},
	doi = {10.1007/978-3-031-96311-7_7},
}

@ARTICLE{Fokina202491,
	author = {Fokina, Daria and Grigoriev, Vasiliy V. and Iliev, Oleg and Oseledets, Ivan},
	title = {Machine Learning Algorithms for Parameter Identification for Reactive Flow in Porous Media},
	year = {2024},
	journal = {Lecture Notes in Computer Science},
	volume = {13952 LNCS},
	pages = {91 – 98},
	doi = {10.1007/978-3-031-56208-2_8},
}

@ARTICLE{Gavrilenko2022378,
	author = {Gavrilenko, Pavel and Haasdonk, Bernard and Iliev, Oleg and Ohlberger, Mario and Schindler, Felix and Toktaliev, Pavel and Wenzel, Tizian and Youssef, Maha},
	title = {A Full Order, Reduced Order and Machine Learning Model Pipeline for Efficient Prediction of Reactive Flows},
	year = {2022},
	journal = {Lecture Notes in Computer Science},
	volume = {13127 LNCS},
	pages = {378 – 386},
	doi = {10.1007/978-3-030-97549-4_43},
}

@article{Biermann2025,
title = {Enabling micro-kinetics based simulation of industrial packed-bed reactors by physics-enhanced neural networks},
journal = {Chemical Engineering Journal},
volume = {519},
pages = {163598},
year = {2025},
issn = {1385-8947},
doi = {https://doi.org/10.1016/j.cej.2025.163598},
url = {https://www.sciencedirect.com/science/article/pii/S1385894725044328},
author = {Felix Biermann and Riccardo Uglietti and Felix A. Döppel and Tim Kircher and Mauro Bracconi and Matteo Maestri and Martin Votsmeier},
}


\begin{appendices}

    \section{Reduced Basis Framework}

    For this subsection we define $u(\mu_j, t_k) \in \R^{N_h}$ for $(\mu_j)_{j = 1}^{N_s} =: \Pcal \subset \Theta$ and 
    $(t_k)_{k = 1}^{N_t} =: \Tcal \subset [0, T]$ to be some vectors. They will later represent the solutions to a 
    parametric PDE for a 
    determined parameter set and time point in a given discrete subspace of the coefficient solution manifold. 
    Here $\Theta \subset \R^p$ represents and arbitrary, compact parameter space for $p \in \N$, 
    $T > 0$ some arbitrary time point, and $N_s, N_t \in \N$ are the parameter and time sample size, respectively. 
    In context of the aforementioned coefficient solution manifold,
    the inner product on $\R^{N_h}$ is defined by 
    using a symmetric positive definite matrix $\G \in \R^{N_h \times N_h}$ via 
    \begin{equation*}
        \langle u, v \rangle_\G := u^T \G v, \qquad \text{for all } u, v \in \R^{N_h},
    \end{equation*}
    and the corresponding norm $\norm{\cdot} := \langle \cdot, \cdot \rangle_\G$.

    It is assumed,
    that we sample the parameters independently and identically distributed, and the time points equidistantly determined
    from their respective spaces, and that 
    \begin{equation*}
        M := \sup\limits_{(\mu, t) \in \Theta \times [0, T]} \norm{u(\mu, t)} < \infty, \quad  m := \inf\limits_{(\mu, t) \in \Theta \times [0, T]}\norm{u(\mu, t)} > 0,
    \end{equation*}
    for all $\mu \in \Theta$ and $t \in [0, T]$ in an essential
    sense. Let also 
    \begin{equation*}
        N_\text{data} := N_s \cdot N_t,
    \end{equation*}
    be the total sample size.

    Notice, that it is possible to switch to a lower dimensional euclidean norm, if the projector in consideration
    is an orthogonal projector on the span of a $\G$-orthonormal matrix with specific identity. Namely, this is true,
    if the matrix $\V \in \R^{N_h \times N}$ 
    for some $N \leq N_h$ is
    $\G$-orthonormal and can be expressed as $\V = \A \tilde{\V}$, where $\A \in \R^{N_A \times N}$
    for $N \leq N_A \leq N_h$ is also $\G$-orthonormal. This is quantified by the following result.

    \begin{proposition} \label{prop: switch to euclidean norm}
        Let $\V \in \R^{N_h \times N}$ 
        for some $N \leq N_h$ be
        $\G$-orthonormal such that
        \begin{equation*}
            \V = \A \tilde{\V},
        \end{equation*}
        where $\A \in \R^{N_h \times N_A}$
        is also $\G$-orthonormal and $\tilde{\V} \in \R^{N_A \times N}$ for $N \leq N_A \leq N_h$. 
        Then we have
        \begin{equation*}
            \norm{u - \V \V^T \G u}^2 = c_\A + \euknorm{u_\text{pre-red} - \tilde{\V} \tilde{\V}^T u_\text{pre-red}}^2,
        \end{equation*}
        for any 
        $u \in \R^{N_h}$ with $c_\A > 0$ being a constant only depending on $\A$ and defining $u_\text{pre-red} = \A^T \G u$.
    \end{proposition}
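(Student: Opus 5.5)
The plan is a Pythagorean splitting in the $\G$-inner product: the residual $u - \V\V^T\G u$ decomposes into a component $\G$-orthogonal to $\text{range}(\A)$ and a component inside $\text{range}(\A)$. The first component will produce $c_\A$, and the second, after pulling back through the $\G$-isometry $\A$, will give the Euclidean term.

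First I record the identities that follow from $\G$-orthonormality. Since $\A^T\G\A = \I_{N_A}$, we get
\[
\V^T\G\V = \tilde{\V}^T\A^T\G\A\tilde{\V} = \tilde{\V}^T\tilde{\V},
\]
so $\V^T\G\V=\I_N$ forces $\tilde{\V}$ to be Euclidean-orthonormal. Moreover $\norm{\A w}^2 = w^T\A^T\G\A w = \euknorm{w}^2$ for every $w\in\R^{N_A}$. Using $u_\text{pre-red} = \A^T\G u$, I then rewrite the projection as
\[
\V\V^T\G u = \A\tilde{\V}\tilde{\V}^T\A^T\G u = \A\tilde{\V}\tilde{\V}^T u_\text{pre-red}.
\]

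Next I split the residual as
\[
u - \V\V^T\G u = \bigl(u - \A\A^T\G u\bigr) + \A\bigl(u_\text{pre-red} - \tilde{\V}\tilde{\V}^T u_\text{pre-red}\bigr).
\]
The second summand lies in $\text{range}(\A)$. The first is $\G$-orthogonal to that range, because $\A^T\G(u - \A\A^T\G u) = \A^T\G u - \A^T\G u = 0$. Hence the cross term vanishes, and Pythagoras together with the isometry property of $\A$ gives
\[
\norm{u - \V\V^T\G u}^2 = \norm{u - \A\A^T\G u}^2 + \euknorm{u_\text{pre-red} - \tilde{\V}\tilde{\V}^T u_\text{pre-red}}^2 .
\]
I therefore set $c_\A := \norm{u - \A\A^T\G u}^2$, which is the pre-reduction (ambient) projection error.

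The only delicate point is interpretive rather than technical. As defined, $c_\A$ is nonnegative rather than strictly positive, and it depends on $u$ as well as on $\A$. It is independent of $\tilde{\V}$, and that is what the later uses need, for instance in Remark \ref{remark: discrete optimal DOD} and in the definition of $\Ecal_\A$. It vanishes when $N_A=N_h$, since then $\A\A^T\G=\I_{N_h}$. I would state it this way, and note that a $u$-uniform constant can be obtained, if desired, by taking $\sup_{u\in\Scal}\norm{u-\A\A^T\G u}^2\le M^2$ under Assumption \ref{assumption parameter-to-solution map}; this yields the identity as an inequality with a $u$-independent constant.
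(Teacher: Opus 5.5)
Your proof is correct and is essentially the paper's argument. You split the residual into the $\G$-orthogonal part $u - \A\A^T\G u$ plus a component in the range of $\A$, apply Pythagoras, and use the isometry $\norm{\A w} = \euknorm{w}$; your caveat is also justified, since the paper's proof implicitly takes $c_\A = \norm{u - \A\A^T\G u}^2$ (written out later in the proof of Proposition~\ref{prop: DOD Existence}), which depends on $u$ and vanishes when $N_A = N_h$.
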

    \begin{proof} 
        $\A \A^T \G$ is a $\G$-orthonormal 
        projector into the span of $\A$. Thus, its residual $u - \A \A^T \G u$ must be 
        $\G$-orthogonal to all columns of $\A$, since we can decompose 
        $u = u_\perp + u_\parallel$, where $u_\parallel \in \text{span}(\A)$ and 
        $u_\perp \in \R^{N_h} - \text{span}(\A)$. Then the Pythagoras decomposition gives
        \begin{align*}
            \norm{u - \V \V^T \G u}^2 &= \norm{u - \A \tilde{\V} \tilde{\V}^T \A^T \G u}^2 \\
            &= \norm{u - \A \A^T \G u}^2 + \norm{\A \A^T \G u - \A \tilde{\V} \tilde{\V}^T \A^T \G u}^2 \\
            &= \norm{u - \A \A^T \G u}^2 + \euknorm{\A^T \G u - \tilde{\V} \tilde{\V}^T \A^T \G u}^2.
        \end{align*}
        The last step is justified by showing that for any $u \in \R^{N_A}$
        \begin{equation*}
            \norm{\A u} = \langle \A u, \A u \rangle_\G = (\A u)^T \G \A u = u^T \A^T \G \A u = u^T u = \euknorm{u}.
        \end{equation*}
        Now setting $u_\text{pre-red} := \A^T \G u$ gives us the wanted statement.
    \end{proof}

    Next, we will discuss this Proper Orthogonal Decomposition and related concepts. For instance the 
    Kolmogorov $N$-width, which lets us derive many key concepts and is an important quantity in the field of ROM methods. 
    We base the following on \cite{quarteroni2015rb}, but import the generalizations to arbitrary inner products from
    \cite{franco2024}. 
    \begin{definition}[Generalized Proper Orthogonal Decomposition] \label{def: POD}
        Assume $N_\text{data} < \infty$ and let $\G \in \R^{N_h \times N_h}$ be a mass matrix.
        We define the time trajectory $U_j = [u(\mu_j, t_k)]_{k = 1}^{N_t} \in \R^{N_h \times N_t}$ 
        for a given $\mu_j \in \Pcal$ and consequently $U = [U_j]_{j=1}^{N_s}
        \in \R^{N_h \times N_\text{data}}$ 
        the collective snapshot matrix. Then the \emph{(discrete) correlation matrix} is given by
        \begin{equation} \label{eq: discrete correlation matrix}
            K = \frac{\vert \Theta \times [0, T] \vert}{N_{data}} U \G U^T \in \R^{N_h \times N_h}.
        \end{equation}
        $K$ is symmetric and semi-positive definite and thus has real and 
        positive eigenvalues $\sigma^2 \geq \dots \geq \sigma_r^2$. 
        Let $\psi^\mu_k$ be the corresponding eigenvectors of the correlation matrix $K$. 
        Then the POD basis of dimension $N < N_h$ is given via
        \begin{equation*}
            \xi_k = \frac{1}{\sigma_k} U \psi_k, \quad 1 \leq k \leq N,
        \end{equation*}
        for the $N$ largest eigenvalues and their corresponding eigenvectors. Finally, stacking these eigenvectors gives us
        the \emph{POD matrix}
        \begin{equation*}
            \A := \left[\xi_k\right]_{k=1}^N \in \R^{N_h \times N}.
        \end{equation*}
        This matrix is $\G$-orthonormal by construction.
        Further, if $N_s, N_t = \infty$, we define the \emph{continuous correlation matrix} via
        \begin{equation} \label{eq: continuous correlation matrix}
            K_\infty = \int_{\Theta \times [0, T]} u(\mu, t) \G u^T(\mu, t) d(\mu, t) \in \R^{N_h \times N_h},
        \end{equation}
        and its real eigenvalues by $\sigma_{k, \infty}^2$.
    \end{definition}

    Following this definition, is a known legacy result.
    It shows optimality of the POD 
    and relates the projection error of the Proper Orthogonal Decomposition with its eigenvalues.

    \begin{proposition} \label{prop: eigenvalues and optimal projection under sampling}
        Let $\mathcal{V}_N = \{ W \in \R^{N_h \times N} \, \vert \, W^T \G W = I_N \}$ be the set of all 
        $N$-dimensional $\G$-orthonormal matrices, 
        $N_\text{data} := N_t N_s$ and $\A = [\xi]_{i=1}^N$ be the POD matrix according to Definition
        \ref{def: POD}, then
        \begin{multline*}
            \frac{\euknorm{\Theta \times [0, T]}}{N_\text{data}}
            \sum\limits_{i \in I, k = 1, \dots, N_t} \norm{u(\mu_j, t_k) - \A \A^T \G u(\mu_j, t_k)}^2 \\
            = \min\limits_{W \in \mathcal{V}_N} \frac{\euknorm{\Theta \times [0, T]}}{N_\text{data}}
            \sum\limits_{i \in I, k = 1, \dots, N_t} 
            \norm{u(\mu_j, t_k) - W W^T \G u(\mu_j, t_k)}^2 
            = \sum_{j=N+1}^r \sigma_k^2.
        \end{multline*}
    \end{proposition}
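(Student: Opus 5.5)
The statement to prove is the last one displayed: Proposition \ref{prop: eigenvalues and optimal projection under sampling}, the classical optimality of the (generalized, $\G$-weighted) POD. The plan is the standard reduction to a Euclidean low-rank approximation problem, followed by the Eckart--Young--Schmidt theorem (equivalently, a Ky Fan trace-maximization argument).

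\emph{Reduction to a trace problem.} Set $c := \euknorm{\Theta \times [0,T]}/N_\text{data}$ and write $u_{jk} := u(\mu_j,t_k)$. For any $W \in \mathcal{V}_N$, the map $P_W := WW^T\G$ is a $\G$-orthogonal projector, since $P_W^2 = P_W$ and $\G P_W$ is symmetric. Pythagoras in the $\G$-inner product (as in the proof of Proposition \ref{prop: switch to euclidean norm}) gives
\begin{equation*}
\norm{u - WW^T\G u}^2 = \norm{u}^2 - \euknorm{W^T\G u}^2 .
\end{equation*}
Summing with weight $c$, the objective equals
\begin{equation*}
c\sum_{j,k}\norm{u_{jk}}^2 - \operatorname{tr}\!\left(W^T \G\, (c\, U U^T)\, \G W\right).
\end{equation*}
Minimizing it is therefore equivalent to maximizing the trace term over $\mathcal{V}_N$.

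\emph{Symmetrization.} Substitute $Z := \G^{1/2}W$, so that $Z^TZ = I_N$, and define $\tilde K := c\,\G^{1/2}UU^T\G^{1/2}$, which is symmetric positive semidefinite. The trace term becomes $\operatorname{tr}(Z^T\tilde K Z)$. By Ky Fan's maximum principle (or Courant--Fischer applied iteratively), its maximum over orthonormal $Z$ equals $\sum_{k\le N}\lambda_k(\tilde K)$. The maximum is attained by the leading eigenvectors of $\tilde K$. The first term equals $c\operatorname{tr}(U^T\G U) = \operatorname{tr}\tilde K = \sum_{k\le r}\lambda_k$, so the minimum is $\sum_{k>N}\lambda_k$.

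\emph{Identification with the POD.} It remains to match the $\lambda_k$ with the $\sigma_k^2$ of Definition \ref{def: POD} and to show that $\A$ realizes the optimum.
\begin{itemize}
\item The nonzero eigenvalues of $\tilde K$ coincide with those of the snapshot-side Gram matrix $c\,U^T\G U$, because $XX^T$ and $X^TX$ share nonzero spectra with $X := \sqrt{c}\,\G^{1/2}U$.
\item I read the correlation matrix of Definition \ref{def: POD} as $cU^T\G U$ (its eigenvectors $\psi_k$ live in $\R^{N_\text{data}}$, as $\xi_k = \sigma_k^{-1}U\psi_k$ requires). The $\xi_k$ are then $\G$-orthonormal, since $\xi_k^T\G\xi_l = \sigma_k^{-1}\sigma_l^{-1}\psi_k^TU^T\G U\psi_l = \delta_{kl}$ up to the factor $c$, which I absorb into the normalization.
\item $\G^{1/2}\xi_k$ is exactly the $k$-th left singular vector of $X$. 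Hence $W = \A$ attains the maximum, and the three expressions in the statement agree.
\end{itemize}

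\emph{Main obstacle.} The real difficulty is bookkeeping, not depth. Definition \ref{def: POD} as written contains typos: it writes $U\G U^T$ in $\R^{N_h\times N_h}$, and the weight $c$ appears inconsistently in the normalization of $\xi_k$. I would first fix the convention so that $\A^T\G\A = I_N$ holds exactly, with $\sigma_k^2$ the eigenvalues of $cU^T\G U$. I would then check that the sum indices (the $j$ versus $k$ mismatch in the statement) refer to $\sum_{k=N+1}^r\sigma_k^2$. Ties among eigenvalues affect only uniqueness of the minimizer, not the minimum value, so they need no special treatment.
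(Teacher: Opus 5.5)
Your proposal is correct and follows essentially the standard argument that the paper delegates to \cite[p.~125--126]{quarteroni2015rb}: reduce the $\G$-weighted problem to a Euclidean one via $\G^{1/2}$, then apply the Schmidt--Eckart--Young/SVD characterization, for which your Ky Fan trace maximization is an equivalent formulation for projection errors, and recover the POD vectors from the snapshot correlation matrix $U^T\G U$. Your repair of Definition~\ref{def: POD} is also the right one: take the correlation matrix to be $c\,U^T\G U$, normalize by $\xi_k=\sqrt{c}\,\sigma_k^{-1}U\psi_k$ so that $\A^T\G\A=I_N$ holds exactly, and sum over $k$ from $N+1$ to $r$ in the final expression.
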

    \begin{proof}
        See \cite[p.125--126]{quarteroni2015rb}.
    \end{proof}

    \begin{remark} \label{remark: continuous correlation existence}
        The equivalent statement can be made for a POD matrix $\A_\infty \in \R^{N_h \times N}$ coming from 
        the continuous correlation matrix $K_\infty \in \R^{N_h \times N_h}$. See \cite{kunischvolkwein2001}.
    \end{remark}

    \begin{proposition} \label{prop: existence and convergence to sigma infty}
        Let $K$ and $K_\infty$ and their respective eigenvalues be defined as in Definition \ref{def: POD}, 
        then letting $N_s, N_t \to \infty$, we get 
        \begin{equation*}
            \sum_{k > N} \sigma_k^2 \longrightarrow \sum_{k > N} \sigma_{k, \infty}^2, \quad \mbox{ a.s.}
        \end{equation*}
    \end{proposition}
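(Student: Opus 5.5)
The plan is to realize both correlation matrices as integrals of the same rank-one, matrix-valued integrand, and to treat the finite sum as a Monte Carlo average of it. Concretely, set $F(\mu,t) := u(\mu,t)\,u(\mu,t)^T \G$. The continuous matrix is then $K_\infty = \int_{\Theta\times[0,T]} F \, d(\mu,t)$, and the discrete matrix $K$ of Definition \ref{def: POD} is the normalized sum $\frac{\euknorm{\Theta\times[0,T]}}{N_\text{data}} \sum_{j,k} F(\mu_j,t_k)$.

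The first step is to show $K \to K_\infty$ almost surely in operator norm as $N_s, N_t \to \infty$. By Assumption \ref{assumption parameter-to-solution map}, $F$ is bounded entrywise by $M^2 \opnorm{\G}$. The map $\mathcal{G}$ is Lipschitz, so $F$ is Lipschitz as well. The time average splits into two parts:
\begin{equation*}
\frac{1}{N_t} \sum_k F(\mu, t_k) \;\text{ versus }\; \frac{1}{T}\int_0^T F(\mu,t)\,dt .
\end{equation*}
This is a Riemann sum on the uniform grid $\Tcal$, so its error is $\Ocal(L M \Delta t)$ uniformly in $\mu$. The remaining average over the i.i.d.\ samples $\mu_j$ converges by the strong law of large numbers, applied entrywise; this is legitimate because the integrand is bounded and $\Theta$ is compact, so the bounded integrable entries give a.s.\ convergence. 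Combining the two parts with the triangle inequality yields $\norm{K - K_\infty} \to 0$ a.s. The only subtlety is the joint limit, and it is harmless here: the Riemann error is deterministic and uniform in $\mu$, so it suffices to take $N_t \to \infty$ and $N_s \to \infty$ in any order, or jointly.

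Next I pass from matrices to eigenvalues. The matrix $K$ is self-adjoint with respect to the inner product $\langle\cdot,\cdot\rangle_{\G^{-1}}$. Equivalently, it is similar to the symmetric positive semidefinite matrix $\G^{1/2} U^T$-type form $\G^{1/2}(\cdot)\G^{1/2}$, and the same similarity transformation applies to $K_\infty$. Weyl's inequality, applied to these symmetrized matrices, gives
\begin{equation*}
\max_k |\sigma_k^2 - \sigma_{k,\infty}^2| \le \opnorm{\G^{1/2}(K-K_\infty)\G^{-1/2}} \to 0 .
\end{equation*}
The tail sum $\sum_{k>N} \sigma_k^2$ is a sum of at most $N_h - N$ terms, a fixed finite number, so it converges a.s.\ to $\sum_{k>N}\sigma_{k,\infty}^2$. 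As an alternative route, one can write the tail as $\operatorname{tr}(K) - \sum_{k\le N}\sigma_k^2$: the trace is linear and hence continuous, and the partial sum $\sum_{k \le N} \sigma_k^2$ is continuous by Ky Fan's maximum principle.

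The main obstacle I expect is making the symmetrization precise, because $K$ is written as $U\G U^T$ in a slightly nonstandard way. I would first rewrite both matrices in the symmetric form $\G^{1/2} U U^T \G^{1/2}$, which has the same nonzero eigenvalues. Once that is done, the eigenvalue perturbation step is standard.
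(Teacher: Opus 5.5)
Your proposal is correct and is essentially the standard argument the paper defers to via \cite[Subsection 2.2]{brivio2023}: almost-sure convergence $K \to K_\infty$ by the strong law of large numbers, followed by continuity of the eigenvalues (Weyl's inequality), which suffices because the tail has finitely many terms. Your version makes explicit what the paper leaves implicit: the uniform Riemann-sum bound on the deterministic time grid that makes the joint limit harmless, the $\G^{1/2}$-symmetrization that repairs the dimensionally inconsistent $U \G U^T$ of Definition \ref{def: POD}, and, tacitly, the requirement that the $\mu_j$ be uniformly distributed on $\Theta$, since only then is the SLLN limit under the normalization $\euknorm{\Theta \times [0, T]}/N_\text{data}$ really $K_\infty$.
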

    \begin{proof}
        See \cite[Subsection 2.2]{brivio2023}.
    \end{proof}

    \section{Approximation Theory}

    Define the set of functions bounded in Sobolev norm by $B>0$ as
    \begin{equation*}
        \Fcal_{n,d,p,B} := \big\{ f \in W^{n,p}((0,1)^d) \, \big\vert \, \snorm{f}[n][p][d] \leq B \big\}.
    \end{equation*}
    Then, the following approximation results for neural networks hold.

    \begin{theorem}[Yarotsky Theorem, cf. \cite{yarotsky2017nn}] \label{theo: yarotsky}
        Let $d \in \N, n \in \N, B = 1$ and $f \in \Fcal_{n ,d , \infty, B}$ some arbitrary function.
        Then there is a constant $c = c(d, n) > 0$ with the following properties:
        
        For any $\varepsilon \in (0, 1)$, there is a neural network architecture 
        $\Acal_\varepsilon = \Acal_\varepsilon(d, n, \varepsilon)$ with $d$-dimensional input and one-dimensional output such
        that, there is a neural network $\Phi_\varepsilon^f$ that has architecture $\Acal_\varepsilon$ and satisfies
        \begin{equation*}
            \snorm{f - R_\rho(\Phi_\varepsilon^f)}[0][\infty][d] \leq \varepsilon,
        \end{equation*}
        with
            $L_{\Acal_\varepsilon} \leq c \cdot \ln(1 / \varepsilon)$,
            $\omega_{\Acal_\varepsilon} \leq c \cdot \varepsilon^{-d/n} \ln(1/\varepsilon)$,
            $N_{\Acal_\varepsilon} \leq c \cdot \varepsilon^{-d/n} \ln(1/\varepsilon)$.
    \end{theorem}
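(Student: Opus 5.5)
We follow Yarotsky's classical construction: localize $f$ with a piecewise-linear partition of unity on a uniform grid, replace $f$ locally by its Taylor polynomial of degree $n-1$, and emulate the resulting sum of products of bounded factors with ReLU networks. The depth $\ln(1/\varepsilon)$ enters only through the approximate multiplication.

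\textbf{Step 1 (grid and partition of unity).} Fix $N\in\N$, to be chosen of order $\varepsilon^{-1/n}$. For each multi-index $m\in\{0,\dots,N\}^d$ we set
\begin{equation*}
    \phi_m(x)=\prod_{k=1}^d \psi\bigl(3N(x_k-m_k/N)\bigr),
\end{equation*}
where $\psi$ is the trapezoid that equals $1$ on $[-1,1]$, vanishes outside $[-2,2]$, and is linear in between. Then $\psi$ is an exact ReLU combination of four units, $\sum_m\phi_m\equiv1$ on $[0,1]^d$, and $\operatorname{supp}\phi_m\subset\{|x-m/N|_\infty<1/N\}$. At $x_m=m/N$ let $P_m$ be the Taylor polynomial of degree $n-1$,
\begin{equation*}
    P_m(x)=\sum_{|\alpha|<n}\frac{D^\alpha f(x_m)}{\alpha!}(x-x_m)^\alpha .
\end{equation*}
The Taylor remainder and $\snorm{f}[n][\infty][d]\le 1$ give
\begin{equation*}
    \Bigl|f-\sum_m\phi_mP_m\Bigr|\le 2^d\max_m\sup_{\operatorname{supp}\phi_m}|f-P_m|\le \frac{2^d d^n}{n!}N^{-n}.
\end{equation*}
We choose $N=\lceil(\tfrac{n!}{2^dd^n}\tfrac{\varepsilon}{2})^{-1/n}\rceil$, so this error is at most $\varepsilon/2$.

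\textbf{Step 2 (multiplication by ReLU networks).} The function $x^2$ on $[0,1]$ equals
\begin{equation*}
    x^2 = x-\sum_{s\ge1}\frac{g_s(x)}{2^{2s}},
\end{equation*}
where $g_s$ is the $s$-fold composition of the hat function $g$, itself a three-unit ReLU net. Truncating after $s$ terms gives a network of depth and size $O(s)$ with uniform error $2^{-2s-2}$. Polarization,
\begin{equation*}
    xy=\tfrac12\bigl((x+y)^2-x^2-y^2\bigr),
\end{equation*}
after rescaling to $[-M,M]$ yields a network $\tilde\times$ that is accurate to $\delta$ on $[-M,M]^2$ with depth and size $c(M)\ln(1/\delta)$. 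It also satisfies $\tilde\times(x,y)=0$ whenever $xy=0$, which preserves the support of $\phi_m$.

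\textbf{Step 3 (assembly).} Each term $\phi_m(x)(x-x_m)^\alpha$ is a product of at most $d+n-1$ factors of size at most $1$. We realize it by chaining $\tilde\times$ along a binary tree of depth $\lceil\log_2(d+n)\rceil$. Errors accumulate at most linearly in the number of products, since every intermediate value stays in $[-1,1]$ and $\tilde\times$ is Lipschitz with constant bounded independently of $\delta$. The coefficient $D^\alpha f(x_m)/\alpha!$, bounded by $1$, is placed into the final linear layer. All networks are run in parallel and summed. On any $x$, at most $2^d$ indices $m$ contribute, and the number of $\alpha$ is fixed, so the approximation error of this part is $\le c_1(d,n)\,\delta$. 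Choosing $\delta=\varepsilon/(2c_1)$ gives total error $\le\varepsilon$.

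Counting, there are $(N+1)^d\binom{d+n-1}{d}=O(\varepsilon^{-d/n})$ terms, each of depth and size $O(\ln(1/\varepsilon))$. Hence $L_{\Acal_\varepsilon}\le c\ln(1/\varepsilon)$ and $\omega_{\Acal_\varepsilon},N_{\Acal_\varepsilon}\le c\,\varepsilon^{-d/n}\ln(1/\varepsilon)$. The architecture depends only on $(d,n,\varepsilon)$, because $f$ enters only through the output coefficients. This is exactly the claimed fixed architecture $\Acal_\varepsilon$ with a network $\Phi^f_\varepsilon$ of that architecture.

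\textbf{Main obstacle.} The delicate step is the error propagation through the chained approximate multiplications in Step 3. We must keep all intermediate arguments in a fixed box, which we do by clipping with $\min(\max(\cdot,-1),1)$, itself an exact ReLU net. We must also verify that the overlap count $2^d$, rather than the total number of terms $(N+1)^d$, multiplies $\delta$. The identity $\tilde\times(0,\cdot)=0$ ensures that terms with $\phi_m(x)=0$ contribute no error, which is what makes this work.
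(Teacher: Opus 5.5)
Your proposal is Yarotsky's own proof of his Theorem~1, which is all the paper relies on, since it states the result by citation only. It has the same ingredients: a trapezoidal partition of unity on a grid with $N\sim\varepsilon^{-1/n}$, local Taylor polynomials, the sawtooth approximation of $x^2$ with polarization, and exactness of the approximate product at zero, so that only the $2^d$ active cells contribute error. Replacing Yarotsky's sequential chain of multiplications by a binary tree with clipping is cosmetic. With clipping, the bound $e\le\delta+e_{\mathrm{left}}+e_{\mathrm{right}}$ follows directly from all clipped factors and all true subproducts lying in $[-1,1]$, so you do not need a Lipschitz bound on $\widetilde{\times}$.

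The step that fails is the final ``Hence''. Your product networks have depth and size of order $\ln(1/\delta)=\ln(1/\varepsilon)+\ln(2c_1)$. The subnetworks for $\psi$, $|\cdot|$ and the clipping add a further $\varepsilon$-independent amount; Yarotsky's multiplication lemma itself gives $c\ln(1/\delta)+c_1(M)$, not $c(M)\ln(1/\delta)$. What you have shown is therefore $L_{\Acal_\varepsilon}\le c\,(\ln(1/\varepsilon)+1)$ and $\omega_{\Acal_\varepsilon},N_{\Acal_\varepsilon}\le c\,\varepsilon^{-d/n}(\ln(1/\varepsilon)+1)$, which is how Yarotsky states the theorem.

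The printed form without the $+1$ cannot hold uniformly for $\varepsilon\in(0,1)$. As $\varepsilon\to1$ the right-hand sides tend to $0$, but every network has at least one layer. Moreover, $f\equiv1\in\Fcal_{n,d,\infty,1}$ forces at least one nonzero weight, because the zero realization has error $1>\varepsilon$. Your absorption of constants is therefore valid only for $\varepsilon\le\varepsilon_0<1$, for instance $\varepsilon\le1/2$, where $\ln(1/\varepsilon)\ge\ln 2$. For the full range the statement has to be read with $\ln(1/\varepsilon)+1$; the construction itself needs no change.
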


    \begin{theorem}[Gühring Theorem, cf. \cite{gühring2019nn}] \label{theo: gühring}
        Let $d \in \N, n \in \N_{\geq 2}, B > 0, 1 \leq p \leq \infty$ 
        and $0 \leq s \leq 1$ and $f \in \Fcal_{n ,d , p, B}$ some arbitrary function.
        Then there is a constant $c = c(d, n, p, B, s) > 0$ with the following properties:
        
        For any $\varepsilon \in (0, 1)$, there is a neural network architecture 
        $\Acal_\varepsilon = \Acal_\varepsilon(d, n, p, B, s, \varepsilon)$ with $d$-dimensional input and one-dimensional output such
        that, there is a neural network $\Phi_\varepsilon^f$ that has architecture $\Acal_\varepsilon$ and satisfies
        \begin{equation*}
            \snorm{f - R_\rho(\Phi_\varepsilon^f)}[s][p][d] \leq \varepsilon,
        \end{equation*}
        with
            $L_{\Acal_\varepsilon} \leq c \cdot \ln(1 / \varepsilon)$,
            $\omega_{\Acal_\varepsilon} \leq c \cdot \varepsilon^{-d/(n-s)} \ln(1/\varepsilon)$,
            $N_{\Acal_\varepsilon} \leq c \cdot \varepsilon^{-d/(n-s)} \ln(1/\varepsilon)$.
    \end{theorem}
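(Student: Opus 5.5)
The plan follows the Yarotsky construction of Theorem \ref{theo: yarotsky}, upgraded so that the error is controlled in $W^{1,p}$ as well as in $L^p$; the case $0 < s < 1$ then follows by interpolation. Fix $N \in \N$, to be tied to $\varepsilon$ at the end. Cover $(0,1)^d$ by the grid points $m/N$, $m \in \{0,\dots,N\}^d$. Build a partition of unity $\{\phi_m\}$ from products of one-dimensional piecewise linear bump functions. Each bump is exactly a small ReLU network, and each $\phi_m$ satisfies $|\phi_m| \le 1$ and $|\nabla \phi_m| \lesssim N$, with support on a cube of side $\sim 1/N$. On each support, replace $f$ by its averaged Taylor polynomial $p_m$ of degree $n-1$, taken in the sense of Sobolev averaging since $f$ is only in $W^{n,p}$. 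The Bramble--Hilbert lemma then gives local bounds on the patch $\Omega_m$:
\[
\|f - p_m\|_{L^p(\Omega_m)} \lesssim N^{-n}|f|_{W^{n,p}(\tilde\Omega_m)}, \qquad \|f - p_m\|_{W^{1,p}(\Omega_m)} \lesssim N^{-(n-1)}|f|_{W^{n,p}(\tilde\Omega_m)} .
\]
Summing over the boundedly overlapping patches shows that $f_N := \sum_m \phi_m p_m$ satisfies $\snorm{f - f_N}[k][p][d] \lesssim B N^{-(n-k)}$ for $k = 0,1$. The gradient bound uses the product rule together with $|\nabla\phi_m| \lesssim N$. This is where the restriction $n \ge 2$ enters: it leaves one order of approximation to spend on the derivative.

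Next, $f_N$ is emulated by a ReLU network. Each term $\phi_m p_m$ is a linear combination, with coefficients bounded by $B$, of monomials times $\phi_m$, that is, products of at most $d+n$ factors with values in $[-1,1]$. Approximate each product by Yarotsky's multiplication network built from the sawtooth approximation of $x \mapsto x^2$, which has depth $\sim \ln(1/\delta)$ and $\sim\ln(1/\delta)$ weights for accuracy $\delta$. The key additional fact needed is that this multiplication network approximates the product in $W^{1,\infty}$ with error $\lesssim \delta$, not only in $L^\infty$. I would prove this directly from the sawtooth representation, whose derivative error is also geometric in the number of layers. A second fact is that the network output vanishes wherever one input factor vanishes, so supports are preserved. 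Support preservation is what makes it possible to add the local errors on boundedly overlapping patches instead of $N^d$ global ones.

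I then compose the multiplications along a binary tree and parallelize over the $N^d$ patches and the $\binom{n-1+d}{d}$ monomials, using $P$ and $\odot$ from \cite{gühring2019nn}. This gives
\[
\snorm{f_N - R_\rho(\Phi)}[k][p][d] \lesssim B\, N^{k}\delta, \qquad k=0,1 .
\]
The factor $N$ for $k=1$ comes from differentiating $\phi_m$ inside the chain rule. The network has depth $\lesssim \ln(1/\delta)$ and $\lesssim N^d\ln(1/\delta)$ weights and neurons. Choosing $N \sim \varepsilon^{-1/(n-s)}$ and $\delta \sim \varepsilon N^{-s}$ gives total error $\le \varepsilon$ in $W^{k,p}$ for $k = s \in \{0,1\}$. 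The complexity is then $\omega_{\Acal_\varepsilon}, N_{\Acal_\varepsilon} \lesssim \varepsilon^{-d/(n-s)}\ln(1/\varepsilon)$ and $L_{\Acal_\varepsilon} \lesssim \ln(1/\varepsilon)$. The architecture depends only on $(d,n,p,B,s,\varepsilon)$, because only the coefficients of the $p_m$ change with $f$.

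For $0 < s < 1$, interpolate:
\[
\snorm{g}[s][p][d] \lesssim \snorm{g}[0][p][d]^{1-s}\snorm{g}[1][p][d]^{s},
\]
applied to $g = f - R_\rho(\Phi)$ with the same $N$, and with $\delta$ chosen so that both endpoint errors are balanced. This yields the rate $N^{-(n-s)}$ and hence the same complexity count.

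The main obstacle I expect is the $W^{1,p}$ control of the approximate multiplication and its composition. The derivative errors of nested approximate products must stay bounded, since a Lipschitz constant multiplies at each level. Also, the ReLU network is only piecewise smooth, so one must check that its weak derivative is the a.e. derivative, which holds for Lipschitz functions. I would first establish the one-dimensional squaring estimate in $W^{1,\infty}$, then bound the composition by induction on the tree depth, which is $\lesssim \log(d+n)$, a constant. The remaining steps are bookkeeping of constants.
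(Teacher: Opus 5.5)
The paper gives no proof of this statement. It is quoted from G\"uhring--Kutyniok--Petersen, and your outline follows the structure of the argument there:
\begin{itemize}
\item localized averaged Taylor polynomials with Bramble--Hilbert bounds in $W^{k,p}$ for $k=0,1$;
\item support-preserving product networks built from the sawtooth squaring network and controlled in $W^{1,\infty}$, with the factor $N$ entering through the bumps;
\item parallelization over the $\sim N^d$ patches and interpolation for $0<s<1$;
\item the choice $\delta\sim\varepsilon N^{-s}\sim\varepsilon^{n/(n-s)}$, so that $\ln(1/\delta)\sim\ln(1/\varepsilon)$.
\end{itemize}
The route is sound, but two steps need repair.

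\textbf{Coefficient bound.} The coefficients of $p_m$ are not bounded by a multiple of $B$ in general. H\"older's inequality applied to the averaged Taylor polynomial over a ball of radius $\sim 1/N$ gives only $|c_{m,\alpha}|\lesssim N^{d/p}\|f\|_{W^{n-1,p}(\tilde{\Omega}_m)}$. When $W^{n,p}\not\subset W^{n-1,\infty}$ (e.g.\ $p<d$), these coefficients genuinely grow with $N$. Your support-preservation property is what saves the estimate:
\begin{itemize}
\item on $\operatorname{supp}\phi_m$ the network error is $\lesssim N^{d/p}\|f\|_{W^{n-1,p}(\tilde{\Omega}_m)}N^k\delta$ in $W^{k,\infty}$;
\item each patch has volume $\sim N^{-d}$, so its $W^{k,p}$ contribution is $\lesssim \|f\|_{W^{n-1,p}(\tilde{\Omega}_m)}N^k\delta$;
\item summing $p$-th powers over boundedly overlapping patches gives the claimed $\lesssim BN^k\delta$ in $W^{k,p}$.
\end{itemize}
Alternatively, push any polynomial loss in $N$ into $\delta$; this changes $\ln(1/\delta)$ only by a constant factor.

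\textbf{Behaviour near $\varepsilon=1$.} The statement as written is false for $\varepsilon$ close to $1$. Every network has at least one layer, whereas $c\ln(1/\varepsilon)<1$ as soon as $\varepsilon>e^{-1/c}$. Your final step tacitly uses $\ln(1/\delta)\lesssim\ln(1/\varepsilon)$, which requires $\ln(1/\varepsilon)$ to be bounded away from zero. You need to either restrict to, say, $\varepsilon\in(0,1/2)$, or replace $\ln(1/\varepsilon)$ by $1+\ln(1/\varepsilon)$ in all three complexity bounds.
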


    \section{Proofs for Main Matter} \label{appendix: proofs for main matter}

    \begin{proof}[Proof of Lemma \ref{lemma: DOD functional is lipschitz}.]
        Let $\mu, \mu' \in \Theta$, $t, t' \in [0, T]$ 
        and $\V, \V' \in [-1, 1]^{N_h \times N'}$ arbitrary.
        For the following calculation, we compose the preliminary identity for 
        $u := \uht$ and $u' := u_h^{\mu', \nu, t'}$,
        and the projectors $P := \V \V^T \G$, $P' := \V' (\V')^T \G$,
        \begin{equation} \label{eq: optimal dod help equation}
            \begin{aligned}
                (\I - P)u - (\I - P')u' &= u - Pu - u' + P'u' \\
                &= u - Pu - u' + P'u' + Pu' - Pu' \\
                &= u - u' - Pu + Pu' + P'u' - Pu' \\
                &= (\I - P)(u - u') + (P' - P)u'.
            \end{aligned}
        \end{equation}
        Then we can calculate with our auxiliary 
        equation (\ref{eq: optimal dod help equation}),
        \begin{align*}
            \euknorm{J(\mu, t, \V) - J(\mu', t', \V')}
            &= \int_{\Xcal} 
            \Big| \, \norm{u - P u}^2 - \norm{u' - P' u'}^2 \, \Big| \, d(\nu)
            \\
            &= \int_{\Xcal}
            \Big| \, \big\langle (u - Pu) + (u' - P'u'),\; (u - Pu) - (u' - P'u') \big\rangle \, \Big| \, d(\nu)
            \\
            &\leq \int_{\Xcal}
            \Big( \norm{u - Pu} + \norm{u' - P'u'} \Big)\;
            \norm{ (u - Pu) - (u' - P'u') } \, d(\nu)
            \\
            &= \int_{\Xcal}
            \Big( \norm{u - Pu} + \norm{u' - P'u'} \Big)\;
            \norm{ (\I - P)(u - u') + (P' - P)u' } \, d(\nu)
            \\
            &= \int_{\Xcal}
            \Big( \norm{u - Pu} + \norm{u' - P'u'} \Big)\;
            \Big( \norm{(\I - P)(u - u')} + \norm{(P' - P)u'} \Big) \, d(\nu)
            \\
            &\leq \int_{\Xcal}
            \Big( \norm{u} + \norm{u'} \Big)\;
            \norm{u - u'} \, d(\nu)
            + \int_{\Xcal}
            \Big( \norm{u} + \norm{u'} \Big)\;
            \opnorm{P' - P} \; \norm{u'} \, d(\nu)
            \\
            &\leq |\Xcal| \, (2M)\, L \big(\euknorm{\mu-\mu'} + \euknorm{t-t'}\big)
            \;+\; |\Xcal| \, (2M^2)\, \opnorm{P' - P}
            \\
            &\leq |\Xcal| \Big( 2ML \big(\euknorm{\mu-\mu'} + \euknorm{t-t'}\big) + 2 M^2 \, \opnorm{\V - \V'} \Big) \\
            &\leq |\Xcal| \, (L + 2M)\, \big(\euknorm{\mu-\mu'} + \euknorm{t-t'} + \opnorm{\V - \V'}\big).
        \end{align*}
        The key justification here, is the bound on $\opnorm{P - P'}$, that can be achieved by considering 
        \begin{align*}
            \opnorm{P - P'} &= \opnorm{(\V' - \V) (\V')^T \G - \V ((\V')^T - \V^T) \G} \\
            &\leq \opnorm{\V' - \V} \cdot \euknorm{(\V')^T \G}_\text{op} + \opnorm{\V} \cdot \euknorm{((\V')^T - \V^T) \G}_\text{op} \\
            &\leq 2 \opnorm{\V - \V'}.
        \end{align*}
        Here, we denote with $\euknorm{\cdot}_\text{op}$ the standard (euclidean) spectral norm. Hence,we have $\opnorm{\V} = \euknorm{\V^T \G}_\text{op} = 1$ for
        any $\G$-orthonormal matrix $\V \in \R^{N_h \times N'}$.
        
        To prove existence of the optimal DOD $s: \Theta \times [0, T]
        \to [-1, 1]^{N_A \times N'}$, we remark, that the space $[-1, 1]^{N_A \times N'}$ is a compact one,
        and the objective DOD functional $J$ is continuous, which allows us to use 
        the Kuratowski-Ryll-Nardzewski Theorem, found in \cite{kuratowski1965}. The specifications of
        how this theorem can be applied to this specific situation can be found in \cite[Appendix B.]{franco2024},
        whereas we will refrain from an extensive discussion, since we assume a stronger quality later on anyway.
    \end{proof}

    \begin{proof}[Proof of Proposition \ref{prop: DOD Existence}.]
        Let $N', N_A, N_h \in \N$ be fixed by Assumption \ref{assumption: dimension hierarchy}. 
        We define $J$, the objective DOD functional for $\Theta'$ and $s$, the
        optimal DOD for $\Theta'$, according to Definition \ref{def: DOD functional}. Then the optimal DOD
        \begin{equation*}
            s: (\mu, t) \mapsto \underset{{\tilde{\V} \in [-1, 1]^{N_A \times N'}}}{\text{argmin}}  J(\mu, t, \A \tilde{\V}),
        \end{equation*}
        is bounded, by continuity of J and compactness of its image. We know that for $\V = \A \tilde{\V}$ being $\G$-orthonormal, its operator
        norm is $1$, and hence, after rescaling, each entry can be inferred to be w.l.o.g. in $[-1, 1]$. This is
        due to the norm equivalence on this finite dimensional matrix space. 
        
        Continuing the reasoning from before, we have 
        $s \in L^2(\Theta \times [0, T]; \R^{N_A \times N'})$, i.e.
        \begin{equation*}
            \norm{s}^2_{L^2(\Theta \times [0, T]; \R^{N_A \times N'})} := 
            \int_{\Theta \times [0, T]} \euknorm{s(\mu, t)}^2 \, d(\mu, t)  < \infty.
        \end{equation*}
        Define $[s]_{ij}$ as the canonically lifted
        entry in position $(i, j) \in \{1, \dots, N_A\} \times \{1, \dots, N'\}$.

        This means that by Hornik Theorem (see \cite{hornik1991nn}), there is a neural network architecture 
        $[\tilde{\V}_0]_{ij}$ with $(p+1)$-dimensional input and one-dimensional output, 
        and a choice of neural network weights $\theta^*_{ij} \in \Theta([\tilde{\V}_0]_{ij})$ 
        for that architecture, satisfying
        \begin{equation*}
            \norm{[s]_{ij} - R_\rho([\tilde{\V}_0]_{ij}(\theta^*_{ij}))}^2_{L^2(\Theta \times [0, T]; \R)} \leq \epsilon,
        \end{equation*}
        for each $(i, j) \in \{1, \dots, N_A\} \times \{1, \dots, N'\}$.
        We then write $[\tilde{\V}_0]_{ij} := [\tilde{\V}_0]_{ij}(\theta^*_{ij})$ for each of
        these networks. Thus, by using network parallelization, we can set the non-restrained inner DOD module as
        \begin{equation*}
            \tilde{\V}_0 := P\left(P([\tilde{\V}_0]_{1 1}, \dots, [\tilde{\V}_0]_{1 N'}), \dots, 
            P([\tilde{\V}_0]_{N_A  1}, \dots,
            [\tilde{\V}_0]_{N_A N'})\right),
        \end{equation*}
        with a joint set of weights $\theta^*_{\Sigma ij} \in \Theta(\tilde{\V}_0)$. These can technically be 
        computed using the canonical lift and the definition of parallelized networks. 
        The realization of this network then attains
        \begin{equation*}
            \norm{s - R_\rho(\tilde{\V}_0)}^2_{L^2(\Theta \times [0, T]; \R^{N_A \times N'})} \leq
            \epsilon.
        \end{equation*}
        We need to ensure, that the provided realization of the non-restrained inner DOD module is indeed in the pre-image of $J$.
        For this, we show that there is a network, which indeed attains the accuracy, but is also entrywise $[-1,1]$.
        Let $\zeta = ((A_1, b_1), (A_2, b_2), (A_3, b_3))$ be given via
        \begin{align*}
            A_1 := \begin{bmatrix}
                \I_{\R^{N_A \times N'}} 
            \end{bmatrix},
            \quad
            b_1 := \begin{pmatrix}
                \I_{\R^{N_A \times N'}}
            \end{pmatrix},&
            \quad
            A_2 := \begin{bmatrix}
                -\I_{\R^{N_A \times N'}}
            \end{bmatrix},
            \quad
            b_2 := \begin{pmatrix}
                \I_{\R^{N_A \times N'}}
            \end{pmatrix}, \\
            A_3 := \begin{bmatrix}
                -\I_{\R^{N_A \times N'}}
            \end{bmatrix},&
            \quad
            b_3 := \begin{pmatrix}
                \I_{\R^{N_A \times N'}}
            \end{pmatrix}
        \end{align*}
        The realization of this network is equivalent to the entry-wise operator of $x \mapsto 1 - \rho(2 - \rho(\rho(x + 1))) 
        \in [-1, 1]$
        for $x \in \R$ on all entries of 
        a $(N_A \times N')$-dimensional matrix. Note, that to make this neural network truly adhere to the definition given in
        \textcite{gühring2019nn}, which needs the
        dimensions to be natural numbers, we would have to reinterpret any matrix as a canonically stacked vector. 
        We omit this for the sake of readability. This does not affect the number of layers or weights.

        Notice, that the Lipschitz constant of $\zeta$ as a realized map is $1$.
        Further, we define
        \begin{equation*}
            \tilde{\V}_{\mu, t} := \left(R_\rho(\zeta) \circ R_\rho(\tilde{\V}_0(\theta^*_{\Sigma ij})) \right)
            (\mu, t), \quad \text{for } (\mu, t) \in \Theta \times [0, T],
        \end{equation*}
        a new network, called the inner DOD module, with a final
        architecture $\Phi_{\tilde{\V}}$ and weights $\theta^*_\text{DOD} 
        \in \Theta(\Phi_{\tilde{\V}})$. Note, that these weights can be computed using the canonical lift, and 
        the definition of network concatenations (see \cite{gühring2019nn}). 
        This gives us the complete definition of the DOD matrix via
        \begin{equation*}
            \V_{\mu, t} := \A \cdot \tilde{\V}_{\mu, t} \qquad \text{for } (\mu, t) \in \Theta \times [0, T].
        \end{equation*}
        
        The complexity of the inner DOD module only differs from $\tilde{\V}_0$ by constants, but it also realizes
        \begin{equation*}
            \int_{\Theta \times [0, T]} \norm{\V_{\mu, t} - \A s(\mu, t)}^2 d(\mu, t)
            \leq \int_{\Theta \times [0, T]} \euknorm{R_\rho(\tilde{\V}_0)(\mu, t) - s(\mu, t)}^2
            d(\mu, t)
            \leq \epsilon,
        \end{equation*}
        by the $1$-Lipschitz property of $R_\rho(\zeta)$ and the argument of $\norm{\A x} = \euknorm{x}$
        for any $x \in \R^{N_h}$.
        Additionally, as a matrix, the DOD is indeed in the pre-image of $J$. We further define, with
        the Proposition \ref{prop: switch to euclidean norm} for the switch to the euclidean norm,
        the constant
        \begin{equation*}
            c_\A' := \int_{\Theta \times [0, T]} \bigg( \sup\limits_{\nu \in \Theta'} 
            \underbrace{\norm{\uht - \A \A^T \G \uht}^2}_{= c_\A} \bigg) d(\mu,t).
        \end{equation*} 
        Then we can calculate with $\uhtprered := \A^T \G \uht$
        \begin{multline} \label{eq: last step for optimal DOD}
            \int_{\Theta \times \Theta' \times [0, T]} 
            \norm{\uht - \V_{\mu, t} \V_{\mu, t}^T \G \uht}^2 d(\mu, \nu, t) \\ 
            \leq \int_{\Theta \times [0, T]}
            \left\vert J(\mu, t, \V_{\mu, t}) - J(\mu, t, \A s(\mu, t)) \right\vert d(\mu, t) +
            \int_{\Theta \times [0, T]} J(\mu, t, \A s(\mu, t)) d(\mu, t) \\
            \leq \epsilon + \int_{\Theta \times [0, T]} \left( \min\limits_{\V \in \R^{N_A \times N'}} J(\mu, t, \A \V)\right) 
            d(\mu, t) \\
            \leq \epsilon + c_\A' + \int_{\Theta \times [0, T]} \left( \min\limits_{\tilde{\V} \in \R^{N_A \times N'}} 
            \sup\limits_{\nu \in \Theta'} \euknorm{\uhtprered - \tilde{\V} \tilde{\V}^T \uhtprered}^2 \right) d(\mu, t).
        \end{multline}
    \end{proof}

    \begin{proof}[Proof of Theorem \ref{theo: PAC bound for DOD+DFNN}.]
        This proof is in its essence highly related to the one of Theorem \ref{theo: PAC bound for DOD-DL-ROM}.
        As such, we will only briefly explain the necessary steps.
        We leverage Theorem \ref{theo: relative error DOD+DNN} and the preliminaries to attain the bounds
        \begin{equation*}
            \Ecal_\text{DOD} < \frac{\varepsilon}{2} \quad \text{and} \quad \Ecal_\text{S} < \frac{\varepsilon}{4}
        \end{equation*}
        in the exact same way as has been done in the proof of Theorem \ref{theo: PAC bound for DOD-DL-ROM}. 
        For the bound on the neural network error, we use the Lipschitz property of $\Gcal$ under 
        Assumption \ref{assumption parameter-to-solution map}. 
        Indeed, let 
        \begin{equation*}
            \Scal_{N'} :=  \{\Phi_*(\mu, \nu, t) := \V_{\mu, t}^T \G \uht \in \R^N \, \vert \, (\mu, \nu, t) 
            \in \Theta \times \Theta' \times [0, T] \},
        \end{equation*}
        for the architecture and choice of weights of the DOD 
        from the proof of Theorem \ref{theo: PAC bound for DOD-DL-ROM}. 
        We again assume w.l.o.g. a normalization
        \begin{equation*}
            \Theta \times \Theta' \times [0, T] \subset (0, 1)^{p+q+1}, \quad \Scal_{N'} \subset (0, 1)^{N'}.
        \end{equation*}
        The optimal coefficient map
        $\Phi_*: (0, 1)^{p+q+1} \to (0, 1)^{N'}$ 
        then inherits the Lipschitz continuity, i.e. for any pair $(\mu, \nu, t), (\mu', \nu', t') 
        \in \Theta \times \Theta' \times [0, T]$, we have
        \begin{align*}
            \euknorm{\Phi_*(\mu, \nu, t) - \Phi_*(\mu', \nu', t')} &= \euknorm{\V_{\mu, t}^T \G (\uht - u_h^{\mu', \nu', t'})}
            \\
            &\leq \euknorm{\uht - u_h^{\mu', \nu', t'}} \\
            &\leq L \cdot (\euknorm{\mu - \mu'} + \euknorm{\nu - \nu'} + 
            \euknorm{t - t'}).
        \end{align*}
        Here, we have exploited
	that $\V_{\mu, t}$ is $\G$-orthonormal
        for any parameter choice.
        From the Lipschitz continuity we infer
        $\Phi_* \in \Fcal_{1, p+q+1, \infty, 1 + L}$. With that argument, we can 
        use Yarotsky Theorem \ref{theo: yarotsky} to get the complexities entry-wise, as seen before,
        with error margin
        \begin{equation*}
            \Ecal_\text{NN} < \frac{\varepsilon}{4}.
        \end{equation*}
        Combining all these, eventually gets us the wanted bounds.
    \end{proof}

\end{appendices}

\end{document}